\newtheorem{theorem}{Theorem}[section]
\newtheorem{definition}[theorem]{Definition}
\newtheorem{proposition}[theorem]{Proposition}
\newtheorem{Lemma}[theorem]{Lemma}
\newtheorem{remark}[theorem]{Remark}
\newtheorem{corollary}[theorem]{Corollary}
\numberwithin{equation}{section}
\title[$(2, q)$-Laplacian equations]{Normalized solutions to a class of $(2, q)$-Laplacian equations\\in the strongly sublinear regime}
\author[R. Ding]{Rui Ding}
\address[R. Ding]{\newline\indent
	School of Mathematics
	\newline\indent
	East China University of Science and Technology
	\newline\indent
	Shanghai 200237, PR China }
\email{\href{mailto:dingrui18363922468@outlook.com }{dingrui18363922468@outlook.com }}
\author[C. Ji]{Chao Ji}
\address[C. Ji]{\newline\indent
	School of Mathematics
	\newline\indent
	East China University of Science and Technology
	\newline\indent
	Shanghai 200237, PR China }
\email{\href{mailto:jichao@ecust.edu.cn}{jichao@ecust.edu.cn}}
\author[P. Pucci]{Patrizia Pucci}
\address[P. Pucci]{\newline\indent
	Dipartimento di Matematica e Informatica
	\newline\indent
	 Universit\`{a} degli Studi di Perugia
	\newline\indent
	06123, Italy}
\email{\href{mailto: patrizia.pucci@unipg.it}{patrizia.pucci@unipg.it}}
\subjclass[2020]{35A15, 35B09, 35J92, 58E05.}
\date{\today}
\keywords{$(2, q)$-Laplacian, Strongly sublinear, Least energy solutions, Multiple solutions, Variational methods}
\begin{document}
	\maketitle
	\begin{abstract}
		In this paper, we consider the existence and multiplicity of normalized solutions for the following $(2, q)$-Laplacian equation
		\begin{equation}\label{Equation1}
			\left\{\begin{aligned}
				&-\Delta u-\Delta_q u+\lambda u=g(u),\quad x \in \mathbb{R}^N, \\
				&\int_{\mathbb{R}^N}u^2 d x=c^2, \\
			\end{aligned}\right.
\tag{$\mathscr E_\lambda$}
\end{equation}
where $1<q<N$,  $\Delta_q=\operatorname{div}\left(|\nabla u|^{q-2} \nabla u\right)$ is the $q$-Laplacian operator, $\lambda$ is a Lagrange multiplier and $c>0$ is a constant. The nonlinearity $g:\mathbb{R}\rightarrow \mathbb{R}$ is continuous and the behaviour of $g$ at the origin is allowed to be strongly sublinear, i.e., $\lim \limits _{s \rightarrow 0} g(s) / s=-\infty$, which includes the logarithmic nonlinearity
		$$
		g(s)= s \log s^2.
		$$
		We consider a family of approximating problems that can be set in $H^1\left(\mathbb{R}^N\right)\cap D^{1, q}\left(\mathbb{R}^N\right)$ and the corresponding least-energy solutions.  Then, we prove that such a family of solutions converges to a least-energy solution to the original problem. Additionally, under certain assumptions about $g$ that allow us to work in a suitable subspace of $H^1\left(\mathbb{R}^N\right)\cap D^{1, q}\left(\mathbb{R}^N\right)$, we prove the existence of infinitely many solutions of the above $(2, q)$-Laplacian equation.
	\end{abstract}

	\section{Introduction}
	In this paper, we are concerned with the existence and multiplicity of normalized solutions to the following $(2, q)$-Laplacian equation
\begin{equation}\label{Equation}
	\left\{\begin{array}{l}
		-\Delta u-\Delta_q u+\lambda u=g(u),{\quad x \in \mathbb{R}^N, } \\
		\int_{\mathbb{R}^N}u^2 d x=c^2, \\
	\end{array}\right.\tag{$\mathscr E_\lambda$}
\end{equation}
where $\Delta_q u=\operatorname{div}\left(|\nabla u|^{q-2} \nabla u\right)$ is the $q$-Laplacian of $u$, $u \in X$, with $X:=H^1\left(\mathbb{R}^N\right) \cap D^{1, q}\left(\mathbb{R}^N\right)$, $c>0$, $\lambda \in \mathbb{R}$ is an unknown parameter that appears as a Lagrange multiplier, $ \frac{2 N}{N+2}<q<2$,
$N \geq 2 $ or $ 2<q<N$, $N \geq 3$.

In recent years, the  $(p, q)$-Laplacian equation has received considerable
attention. The $(p, q)$-Laplacian equation comes from
the general reaction-diffusion equation
	\begin{equation}\label{Eq-Reaction}
		u_t=\operatorname{div}(D(u) \nabla u)+f(x, u) \text { where } D(u):=|\nabla u|^{p-2}+|\nabla u|^{q-2} \text {, }
	\end{equation}
this equation has a wide range of applications in physics and related sciences,
such as plasma physics, biophysics, and chemical reaction design. In such applications,
the function $u$ describes a concentration; $\operatorname{div}(D(u) \nabla u)$ corresponds to the diffusion and $f(x, u)$ is the reaction related to source and loss processes, for more details, please refer to \cite{Cherfils}.

Taking the stationary version of \eqref{Eq-Reaction}, with $p=2$, we obtain the $(2, q)$-Laplacian equation
	\begin{equation}\label{Eq-Equationnomass}
		-\Delta u-\Delta_q u=f(x, u), \quad x\in  \mathbb{R}^N.
\end{equation}
In this paper, inspired by the fact that physicists are often {interested in} normalized solutions, we look for solutions of \eqref{Eq-Equationnomass} in $X$ having a prescribed $L^2$-norm. This approach seems to be particularly meaningful from the physical point of view, because in nonlinear optics and {in} the theory of Bose-Einstein condensates, there is a conservation of mass, see \cite{Frantzeskakis2010BEC,Malomed2008BEC}. For the results involving  $(p, q)$-Laplacian equations, we can refer to \cite{LP, MV}.
	
Due to our scope, we would like to mention \cite{Baldelli20222q}, where
Baldelli and Yang studied the existence of normalized solutions of the following $(2, q)$-Laplacian equation for all possible cases
depending on the value of $ p $,
\begin{equation}\label{eqBaldelli}
	\left\{\begin{array}{l}
		-\Delta u-\Delta_q u=\lambda u+|u|^{p-2} u, \quad x \in \mathbb{R}^N, \\
		\int_{\mathbb{R}^N}|u|^2 d x=c^2 .
	\end{array}\right.
\end{equation}
In the $L^2$-subcritical case, they studied a global minimization problem and obtained a ground state solution for problem \eqref{eqBaldelli}. In the $L^2$-critical case, they proved several nonexistence results, which were also extended to the $L^q$-critical case. Finally, for the $L^2$-supercritical case, they proved the existence of a ground state and infinitely many
radial solutions.

In a recent work \cite{Lcai2024Norm}, Cai and R{\u{a}}dulescu studied the following $(p, q)$-Laplacian equation with $L^p$-constraint
\begin{equation}\label{eqLcai}
	\left\{\begin{array}{l}
		-\Delta_p u-\Delta_q u+\lambda|u|^{p-2} u=f(u), \quad x \in \mathbb{R}^N, \\
		\int_{\mathbb{R}^N}|u|^p d x=c^p, \\
		u \in W^{1, p}\left(\mathbb{R}^N\right) \cap W^{1, q}\left(\mathbb{R}^N\right).
	\end{array}\right.
\end{equation}
For problem \eqref{eqLcai},
they established the existence of ground states, and revealed some basic behaviors of the ground state energy $E_c$ as $c>0$ varies. The analysis developed in~\cite{Lcai2024Norm} allows {them} to provide the general growth assumptions imposed to the reaction $f$.

On the other hand,
{in the past decades}, the following logarithmic Schr\"odinger equation has received considerable attention
\begin{equation}\label{E_var}
	-{\varepsilon}^2\Delta u+ V(x)u=u \log u^2, \quad x \in \mathbb{R}^N,
\end{equation}
where $\varepsilon >0$  and $V:\mathbb{R}^{N}\rightarrow \mathbb{R}$ is a potential function. {Equation} \eqref{E_var} has some physical applications, such as quantum mechanics, quantum optics, nuclear physics, transport, diffusion phenomena, and others. For further details, we refer to~\cite{Zlo}. {Equation} \eqref{E_var} also raises many difficult mathematical {questions}. For example, there exists $u \in H^{1}(\mathbb{R}^N)$ such that $\int_{\mathbb{R}^N}u^{2}\log u^2 \, dx=-\infty$. Thus, the {underlying} energy functional associated
to  \eqref{E_var} is no longer  of class $C^{1}$. In order to overcome this technical difficulty some authors have used different techniques to establish  the existence, multiplicity and concentration of the solutions under some assumptions on the potential {$V(x)$}, which can be seen in  \cite{AlvesChaobefore, AJ0, AJ1, cs, sz, sz2, WZ} and the references therein. {See} also \cite{AJ2} for the normalized solutions of logarithmic Schr\"{o}dinger equations.

In particular, {in \cite{Mederski2023sublinear}},
Mederski and Schino investigated the least-energy normalized solutions
{of}  the following Schr\"odinger equation
$$
-\Delta u+\lambda u=g(u), \quad x \in \mathbb{R}^N,
$$
coupled with the mass constraint $\int_{\mathbb{R}^N}|u|^2 d x=\rho^2$, {when} $N \geq 2$.
{{In~\cite{Mederski2023sublinear}},} the behaviour of $g$ at the origin is allowed to be strongly sublinear, i.e., $\lim \limits _{s \rightarrow 0} g(s) / s=-\infty$, which includes the logarithmic nonlinearity represented as the special case
\begin{equation}\label{Log}
	g(s)=s \log s^2.
\end{equation}
Under some assumptions about $g$, {Mederski and Schino
in~\cite{Mederski2023sublinear}}  proved the existence of infinitely many normalized solutions {of} the above problem.
In addition, when
\begin{equation}\label{Logp}
	g(s)=\alpha s \ln s^2+\mu|u|^{p-2} u,
\end{equation}
under certain conditions, they provided the non-existence of solutions.

Motivated by the {results} mentioned above,  this paper aims to investigate the existence and multiplicity of normalized solutions to the $(2, q)$-Laplacian equation \eqref{Equation},
where $g$ is allowed to be strongly sublinear at the origin. Before stating the main results of this paper, we present the assumptions required on~$g$. 	
Define $G(s)=\int_0^s g(t) d t$ and $\bar{q}:=\left(1+\frac{2}{N}\right) \max \{2, q\}$, ${q^{\prime}}:=\max \left\{2^*, q^*\right\}$
%{(All of $\bar{p}$ and $p^{\prime}$ have been replaced with $\bar{q}$ and $q^{\prime}$)},
and $s^*:=\frac{s N}{N-s}$. Let
\begin{equation}\label{G+}
	G_{+}(s)= \begin{cases}\int_0^s \max \{g(t), 0\} d t, \quad &\text { if } s \geq 0, \\
\int_s^0 \max \{-g(t), 0\} d t, \quad &\text { if } s<0 .\end{cases}
\end{equation} \\
$(g_0)$ $g: \mathbb{R} \rightarrow \mathbb{R}$ is continuous and $g(0)=0$.\\
$(g_1)$ $\lim \limits _{s \rightarrow 0} G_{+}(s) /|s|^2=0$.\\
$(g_2)$ If $N \geq 3$, then $\limsup\limits _{|s| \rightarrow +\infty}|g(s)| /|s|^{q^{\prime}-1}<+\infty$.\\
If $N=2$, then $\lim \limits _{|s| \rightarrow +\infty} g(s) / e^{\alpha s^2}=0$ for all $ \alpha>0$.\\
$(g_3)$ $\lim \limits _{|s| \rightarrow +\infty} G_{+}(s) /|s|^{\bar{q}}=0$.\\
$(g_4)$ There exists $\xi_0 \neq 0$ such that $G\left(\xi_0\right)>0$.\\
Let us define
$$
\mathcal{D}(c):=\left\{u \in X: \int_{\mathbb{R}^N}|u|^2 d x \leq c^2\right\} \quad \text { and } \quad \mathcal{S}(c):=\left\{u \in X: \int_{\mathbb{R}^N}|u|^2 d x=c^2\right\} .
$$
{Let
$J: X\rightarrow \mathbb{R} \cup\{\infty\}$ be the energy
functional associated with \eqref{Equation}
\begin{equation}\label{functional}
	J(u)=\frac{1}{2} \int_{\mathbb{R}^N}|\nabla u|^2 d x+\frac{1}{q} \int_{\mathbb{R}^N}|\nabla u|^q d x-\int_{\mathbb{R}^N} G(u) d x ,
\end{equation}
on the constraint $\mathcal{S}(c)$.}
The  energy functional $J$ is not well defined in $X$. Indeed, when  $g(u)=u \log u^2$,   there exists $u \in X$ such that $\int_{\mathbb{R}^N} u^2 \log u^2 d x=-\infty$, as shown in  details in the Appendix. Furthermore, the reason of  the restriction $ \frac{2 N}{N+2}<q$ when $N\ge2$
appears in Remark~\ref{Re-2N/N+2} of the Appendix.
	
Our first result is the existence of the normalized solutions to \eqref{Equation}, with additional properties. Specifically, we define  the normalized solution to \eqref{Equation} as a pair $(u, \lambda) \in X \times \mathbb{R}$ such that $J^{\prime}(u) v=-\lambda u v$ for every $v \in \mathcal{C}_0^{\infty}\left(\mathbb{R}^N\right)$.
\begin{theorem}\label{theorem1}
	If $g$ satisfies $(g_0)-(g_4)$, then there exists $\bar{c}\geq0$ such that for every $c>\bar{c}$, there exist $\lambda>0$ and $u \in \mathcal{S}(c)$ such that $J(u)=\min _{\mathcal{D}} J<0$ and $(u, \lambda)$ is a solution to \eqref{Equation}. Moreover, $u$ has constant sign and is, up to a translation, radial and radially monotonic.
\end{theorem}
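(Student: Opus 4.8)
The plan is to follow the approximation strategy announced in the abstract: since the logarithmic-type nonlinearity makes $J$ merely lower semicontinuous (not finite, not $C^1$) on $X$, one cannot minimize $J$ directly. First I would truncate $g$ near the origin to remove the strongly sublinear singularity. Concretely, for a small parameter $\delta>0$ define $g_\delta$ by replacing $g(s)$ for $|s|\le\delta$ by a function that is controlled like $|s|^{r-1}$ for some $r\in(2,\bar q)$ (matching the value and keeping continuity at $|s|=\delta$, so that $(g_1)$ is strengthened to the condition $\limsup_{s\to0}|g_\delta(s)|/|s|^{r-1}<\infty$ and $(g_0),(g_2),(g_3),(g_4)$ persist uniformly in $\delta$). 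For such $g_\delta$ the functional $J_\delta$ is well defined and of class $C^1$ on all of $X$, and by $(g_1)$–$(g_3)$ it is coercive and bounded below on the ball $\mathcal D(c)$: the gradient terms control $\|\nabla u\|_2^2$ and $\|\nabla u\|_q^q$ while $\int G_\delta(u)$ is, by $(g_3)$ and Gagliardo–Nirenberg in the borderline exponent $\bar q$, dominated by a small multiple of these norms plus a constant depending on $c$.

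Next I would solve the constrained minimization $m_\delta(c):=\inf_{\mathcal D(c)}J_\delta$. By $(g_4)$ and an appropriate rescaling $u\mapsto u(\cdot/t)$ one checks $m_\delta(c)<0$ once $c$ exceeds some threshold $\bar c_\delta\ge0$ (this is where $\bar c$ comes from — it is $\limsup_\delta\bar c_\delta$, or one shows $\bar c_\delta$ stays bounded). To get compactness for the minimizing sequence I would pass to Schwarz symmetrizations, which do not increase $J_\delta$ (the Dirichlet-type terms $\int|\nabla u|^2$ and $\int|\nabla u|^q$ decrease under symmetric decreasing rearrangement, and $\int G_\delta(u)$ only depends on the distribution of $|u|$, so $-\int G(u)$ is unchanged after replacing $G$ by its even majorant — one handles the general $G$ via $G_+$ as in the assumptions), reducing to a radially symmetric minimizing sequence which is bounded in $X$; radial compact embeddings $H^1_{rad}\hookrightarrow L^p$ for $p\in(2,2^*)$ then give a strongly convergent subsequence in the relevant Lebesgue spaces. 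Because $m_\delta(c)<0$, the weak limit $u_\delta$ is nontrivial; a standard argument (if the constraint were inactive, $\int u_\delta^2<c^2$, one could dilate to strictly decrease the energy, contradicting minimality) forces $u_\delta\in\mathcal S(c)$, and lower semicontinuity of the gradient terms plus strong $L^p$-convergence gives $J_\delta(u_\delta)=m_\delta(c)$. The Lagrange multiplier rule produces $\lambda_\delta\in\mathbb R$ with $J_\delta'(u_\delta)=-\lambda_\delta u_\delta$; testing against $u_\delta$ and using $m_\delta(c)<0$ together with the Pohozaev/Nehari-type scaling identities shows $\lambda_\delta>0$, and the sign/monotonicity of $u_\delta$ is inherited from the symmetrization (one may also replace $u_\delta$ by $|u_\delta|$ since $g$ need not be odd, invoking $G_+$).

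The decisive step — and the one I expect to be the main obstacle — is the limit $\delta\to0^+$. I would show: (i) the energies $m_\delta(c)$ are uniformly bounded away from $0$ from above and bounded below, hence $(u_\delta)$ is bounded in $X$ uniformly in $\delta$; (ii) the multipliers $\lambda_\delta$ are bounded and bounded away from $0$ (using the sign argument uniformly, and the test-function identity $\lambda_\delta c^2 = \int g_\delta(u_\delta)u_\delta - \|\nabla u_\delta\|_2^2-\|\nabla u_\delta\|_q^q$ with uniform control on the right-hand side from $(g_2),(g_3)$); (iii) up to subsequences $u_\delta\to u$ in $X$ weakly and in $L^p_{loc}$, $\lambda_\delta\to\lambda>0$. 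The genuine difficulty is controlling $\int_{\{|u_\delta|\le\delta\}} g_\delta(u_\delta)\varphi$ and passing this to $\int g(u)\varphi$ for test functions $\varphi\in\mathcal C^\infty_0$: on the bad set $g_\delta$ differs from $g$ but both are bounded there by a quantity like $C\delta^{r-1}+|g(u_\delta)|\chi_{\{|u_\delta|\le\delta\}}$, and since $g(s)/s\to-\infty$ one needs a uniform integrability / equi-small-measure estimate on $\{0<|u_\delta|\le\delta\}$ — this is exactly where the strong sublinearity is delicate, and I would handle it by a De Giorgi–type or Brezis–Kato-type iteration giving uniform $L^\infty$ bounds on $u_\delta$ (using $(g_2)$ and $\lambda_\delta>0$), so that the nonlinear term converges by dominated convergence. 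Once the limit equation $J'(u)v=-\lambda uv$ holds for all $v\in\mathcal C^\infty_0$, lower semicontinuity and the identity $J_\delta(u_\delta)\to$ (something) combined with $J(u)\le\liminf J_\delta(u_\delta)$ and the reverse inequality from using $u$ as a competitor in each $J_\delta$ (after truncating $u$) shows $J(u)=\min_{\mathcal D}J<0$ and $u\in\mathcal S(c)$; the qualitative properties (constant sign, radial, radially monotone) are preserved in the limit since each $u_\delta$ has them and radial monotone functions form a closed set under the relevant convergence.
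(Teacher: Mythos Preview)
Your overall architecture---regularize the nonlinearity, minimize the approximating functional on $\mathcal D(c)$, then send the regularization parameter to zero---is exactly the paper's strategy. However, two of your implementation choices diverge from the paper in ways that matter, and one of them leaves a genuine gap.

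\textbf{Compactness for the perturbed problem.} You propose Schwarz symmetrization to produce a compact minimizing sequence. The paper does \emph{not} do this: since $(g_0)$--$(g_4)$ do not assume $g$ odd, passing from $u$ to $|u|^*$ need not lower $J_\delta$ (the term $-\int G_\delta(u)$ is not controlled by $-\int G_\delta(|u|)$ in general). Instead the paper runs a concentration--compactness argument: Ekeland's principle yields a Palais--Smale sequence, Lions' lemma excludes vanishing (using $m_\varepsilon(c)<0$), and a strict subadditivity inequality $m_\varepsilon(\sqrt{c_1^2+c_2^2})<m_\varepsilon(c_1)+m_\varepsilon(c_2)$ (Lemmas~\ref{LemmaSubadd}--\ref{LemmaStrictSubadd}) excludes dichotomy. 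The radial/monotone/sign properties of $u_\varepsilon$ are obtained only \emph{a posteriori} by citing \cite{Jeanjean2022Onglobal}, not as a tool for compactness.

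\textbf{The limit $\delta\to0$ and why $u\in\mathcal S(c)$.} This is where your proposal has a real gap. You only have $u_\delta\rightharpoonup u$ weakly in $X$ and strongly in $L^p$ for $p\in(2,q')$; radial embeddings are \emph{not} compact in $L^2$, so you cannot conclude $\|u\|_2=c$ from $\|u_\delta\|_2=c$. Your suggestion ``use $u$ as a competitor in each $J_\delta$'' yields at best $J(u)=\min_{\mathcal D(c)}J$, not $u\in\mathcal S(c)$; and you cannot argue via the Lagrange multiplier rule for $J$ on $\mathcal D(c)$ because $J$ is not $\mathcal C^1$ on $X$ (the paper flags exactly this point in Remark~\ref{re111}(ii)). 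The paper's device is to exploit the Poho\v{z}aev identity twice: once for each $u_\varepsilon$ (equation~\eqref{eqPoh}) and once for the limit $u$ (equation~\eqref{Lemma43}), and then compare. Since $\int G_+(u_\varepsilon)\to\int G_+(u)$ and all the remaining terms on the left are weakly lower semicontinuous, the Poho\v{z}aev identity forces every term to converge, in particular $\|u_\varepsilon\|_2\to\|u\|_2$; this is how $u\in\mathcal S(c)$ is obtained.

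\textbf{Passing the nonlinear term.} Your proposed route through uniform $L^\infty$ bounds via De Giorgi/Brezis--Kato is heavier than what the paper does and requires a uniform positive lower bound on $\lambda_\delta$, which you assert but do not justify (the paper only gets $\lambda_\varepsilon>0$ and bounded above, with $\lambda>0$ established \emph{after} the limit via Poho\v{z}aev). The paper instead uses the specific structure of its cutoff---it multiplies only $g_-$ by a factor $\varphi_\varepsilon(s)\in[0,1]$ with $\varphi_\varepsilon\to1$ pointwise---so that $\varphi_\varepsilon(u_\varepsilon)g_-(u_\varepsilon)v\to g_-(u)v$ a.e.\ and uniform integrability of $g_+(u_\varepsilon)v$ and $\varphi_\varepsilon(u_\varepsilon)g_-(u_\varepsilon)v$ follows directly from $(g_1)$--$(g_3)$, as in \cite{Mederski2021optimal}. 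No elliptic regularity is needed.
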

{\begin{remark}
 The assumptions $ (g_0)-(g_4) $ include the classical case $\lim _{s \rightarrow 0} G(s) /|s|^2=0$, which implies that $J$ is of class $\mathcal{C}^1$ in $X$. In this case, If $u$ is a critical point of $\left.J\right|_{\mathcal{S}(c)}$, then there exist $\lambda\in\mathbb{R}$ such that $(u, \lambda)$ is a solution to \eqref{Equation}. An important example is $G(s)=|s|^p$, where $p\in(2,\tilde{q})$. For more details, we refer to \cite{Baldelli20222q}.
\end{remark}
It is a natural question to ask ``When $\bar{c}=0$ holds". To answer this, the behavior of $g$ near $ 0 $ is critical. We can establish the following results:
\begin{proposition}\label{prop1}
Suppose $g$ satisfies assumptions $\left(g_0\right)-\left(g_4\right)$, and define $\tilde{q}=\left(1+\frac{2}{N}\right) \min \{2, q\}$, then\\
(i) If $\liminf_{s \rightarrow 0}G(s)/s^{\tilde{q}}=+\infty$, then $\bar{c}=0$ in Theorem \ref{theorem1}.\\
(ii) If $\limsup_{s \rightarrow 0} G(s) / s^{\tilde{q}}<+\infty$, then $\bar{c}>0$ in Theorem \ref{theorem1}.	
\end{proposition}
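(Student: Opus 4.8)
The plan is to reason entirely through the value $m(c):=\inf_{\mathcal D(c)}J$. Recall that $\bar c$ is the infimum of the $c>0$ for which the conclusion of Theorem~\ref{theorem1} holds, and that, as shown in its proof, this conclusion is available exactly when $m(c)<0$; here $m$ is finite by $(g_3)$, non-increasing, and satisfies $m(c)\le J(0)=0$. Hence proving (i) reduces to showing $m(c)<0$ for \emph{every} $c>0$, and proving (ii) reduces to producing some $c_0>0$ with $J\ge0$ on all of $\mathcal D(c_0)$ (so that $m(c)=0$ for $0<c\le c_0$).

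For (i), fix $c>0$ and a cut-off $\varphi\in\mathcal C_0^\infty(\mathbb R^N)$ with $0\le\varphi\le1$, $\varphi\equiv1$ on $B_1$, $\mathrm{supp}\,\varphi\subset B_2$, and work with the two-parameter family $v_{a,R}(x):=a\varphi(x/R)\in X$. I would spend one parameter on the constraint, choosing $R=R(a)$ so that $\|v_{a,R(a)}\|_2^2=a^2R(a)^N\|\varphi\|_2^2=c^2$, and then let $a\to0^+$. The Dirichlet integrals scale as $a^2R^{N-2}$ and $a^qR^{N-q}$, while the assumption $\liminf_{s\to0}G(s)/|s|^{\tilde q}=+\infty$ provides, for any prescribed $M>0$ and all small $a$, the uniform bound $G(a\varphi)\ge M|a\varphi|^{\tilde q}$ on the support, hence $\int_{\mathbb R^N}G(v_{a,R})\,dx\ge M|B_1|\,a^{\tilde q}R^N$. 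Eliminating $R$ through $a^2R^N=c^2/\|\varphi\|_2^2$ turns the three terms of $J(v_{a,R(a)})$ into powers of $a$ with exponents $4/N$, $(1+\tfrac2N)q-2$ and $\tilde q-2$, respectively. The decisive elementary fact is that $\tilde q-2=\min\{4/N,\ (1+\tfrac2N)q-2\}$ — it equals the $q$-exponent when $q<2$ and the $2$-exponent when $q>2$ — so, after dividing by $a^{\tilde q-2}$ and sending $a\to0^+$, the subdominant gradient term vanishes and the limit has the form $(\text{positive constant depending on }c,\varphi,N,q)-M\cdot(\text{positive constant})$. Choosing $M$ large (which is exactly what $\liminf=+\infty$ permits) and then $a$ small yields $J(v_{a,R(a)})<0$; since $c>0$ was arbitrary, $\bar c=0$.

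For (ii), I would first convert $\limsup_{s\to0}G(s)/|s|^{\tilde q}<+\infty$ into the pointwise estimate $G(s)\le C_0|s|^{\tilde q}$ for $|s|$ small, and combine it with $G\le G_+$, the local boundedness of $G_+$, and $(g_3)$, to obtain the global bound $G(s)\le C_0|s|^{\tilde q}+C_1|s|^{\bar q}$. Then, for $u\in\mathcal D(c)$, $\int_{\mathbb R^N}G(u)\,dx\le C_0\|u\|_{\tilde q}^{\tilde q}+C_1\|u\|_{\bar q}^{\bar q}$. Now $\tilde q=(1+\tfrac2N)\min\{2,q\}$ and $\bar q=(1+\tfrac2N)\max\{2,q\}$ are precisely the $L^2$-critical Gagliardo–Nirenberg exponents for the $\min\{2,q\}$- and the $\max\{2,q\}$-gradient (both admissible, since $q>\tfrac{2N}{N+2}$), so the matching Gagliardo–Nirenberg inequalities give $\|u\|_{\tilde q}^{\tilde q}\le C\,c^{\alpha}\|\nabla u\|_{\min\{2,q\}}^{\min\{2,q\}}$ and $\|u\|_{\bar q}^{\bar q}\le C\,c^{\beta}\|\nabla u\|_{\max\{2,q\}}^{\max\{2,q\}}$ with $\alpha,\beta>0$ and $C$ independent of $u$ and $c$. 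Substituting back and regrouping, $J(u)\ge(\tfrac12-C'c^{\rho_1})\|\nabla u\|_2^2+(\tfrac1q-C'c^{\rho_2})\|\nabla u\|_q^q$ with $\rho_1,\rho_2>0$; taking $c_0$ so small that $C'c_0^{\rho_1}\le\tfrac12$ and $C'c_0^{\rho_2}\le\tfrac1q$ gives $J\ge0$ on $\mathcal D(c_0)$, hence $m(c)=0$ for $0<c\le c_0$ and $\bar c\ge c_0>0$.

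The main obstacle is (i). It is not a matter of one lucky test function but of choosing the right two-parameter family and coupling it to the mass constraint so that, once $R$ is eliminated, the \emph{weakest} of the three scalings — the one carried by $\min\{2,q\}$, i.e. by $\tilde q$ — governs the sign of the energy while the remaining gradient term becomes negligible as $a\to0^+$. The non-homogeneity of $G$ compounds this: the lower bound $G(s)\ge M|s|^{\tilde q}$ is only valid near $s=0$, so $a$ must be sent to $0$ \emph{after} $M$ has been fixed large (which in turn is fixed \emph{after} $c$); pinning down this order of quantifiers, and checking that the transition region of $\varphi$ does not disturb the leading-order balance, is where the care lies.
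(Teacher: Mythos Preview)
Your proposal is correct. For part~(i) it is essentially the paper's argument: the paper takes a fixed $u\in\mathcal S(c)\cap\mathcal C_0^\infty$ and uses the mass-preserving dilation $u_t(x)=t^{N/2}u(tx)$, which is exactly your family $a\varphi(\cdot/R)$ with the constraint used to eliminate one parameter; the decisive identity $\tilde q\,\delta_{\tilde q}=\min\{2,\,q(1+\delta_q)\}$ in the paper is the same as your observation $\tilde q-2=\min\{4/N,\,(1+2/N)q-2\}$. The only cosmetic difference is that the paper fixes the constant $C$ in $G(s)\ge C|s|^{\tilde q}$ \emph{a priori} (depending on $u$) so that the algebra closes in one line, whereas you keep $M$ free and send it to infinity at the end.

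For part~(ii) your route differs from, and is in fact more complete than, the paper's sketch. The paper writes only $G(s)\le C_g|s|^{\tilde q}$ and bounds $\|u\|_{\tilde q}^{\tilde q}$ via the $H^1$ Gagliardo--Nirenberg inequality. Your version adds the $|s|^{\bar q}$ correction coming from $(g_3)$---which is genuinely needed, since the hypothesis controls $G$ only near $0$---and then pairs each Lebesgue norm with the \emph{matching} gradient: $\|u\|_{\tilde q}^{\tilde q}\lesssim c^{\alpha}\|\nabla u\|_{\min\{2,q\}}^{\min\{2,q\}}$ and $\|u\|_{\bar q}^{\bar q}\lesssim c^{\beta}\|\nabla u\|_{\max\{2,q\}}^{\max\{2,q\}}$. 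This is what makes the absorption into $\tfrac12\|\nabla u\|_2^2+\tfrac1q\|\nabla u\|_q^q$ go through cleanly in both regimes $q<2$ and $q>2$, with coefficients that are positive powers of $c$.
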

Our next result concerns the limit of the ground energy map $c \mapsto \inf _{\mathcal{D}(c)} J$ as $c \rightarrow +\infty$.
\begin{proposition}\label{prop2}
If $g$ satisfies  assumptions $ (g_0)-(g_4) $, then
$$
\lim _{c \rightarrow +\infty} \inf _{\mathcal{D}(c)} J=-\infty.
$$
\end{proposition}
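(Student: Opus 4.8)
The plan is to exhibit, for every large $M>0$, a single function $u\in X$ with $J(u)<-M$ and then observe that, after rescaling in the $L^2$-mass, this forces $\inf_{\mathcal D(c)}J\to-\infty$. The natural starting point is assumption $(g_4)$: there is $\xi_0\neq 0$ with $G(\xi_0)>0$. As in the standard normalized-solutions literature (cf. the approach of Mederski–Schino and of Cai–R\u{a}dulescu cited in the introduction), I would fix a smooth cutoff and build a ``plateau'' test function: let $\eta\in\mathcal C_0^\infty(\mathbb R^N)$ with $0\le\eta\le 1$, $\eta\equiv 1$ on $B_1$ and $\operatorname{supp}\eta\subset B_2$, and for $R>1$ set
\begin{equation*}
u_R(x)=\xi_0\,\eta(x/R).
\end{equation*}
Then $u_R\in X$ (it is bounded with compact support), $\int_{\mathbb R^N}|\nabla u_R|^2\,dx = \xi_0^2 R^{N-2}\|\nabla\eta\|_2^2$, $\int_{\mathbb R^N}|\nabla u_R|^q\,dx = \xi_0^q R^{N-q}\|\nabla\eta\|_q^q$, while the potential term splits as
\begin{equation*}
\int_{\mathbb R^N}G(u_R)\,dx = G(\xi_0)\,|B_R| + \int_{B_{2R}\setminus B_R}G(\xi_0\eta(x/R))\,dx \ge G(\xi_0)\omega_N R^N - C R^N,
\end{equation*}
where $C=C(\xi_0,\eta)$ bounds $|G|$ on the compact range $[\min\{0,\xi_0\},\max\{0,\xi_0\}]$ (here $G$ is continuous by $(g_0)$, hence bounded on bounded sets). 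Choosing $R$ so that the genuinely positive leading term $G(\xi_0)\omega_N R^N$ dominates, the key point is the balance of powers: since $q<N$ and $2\le N$ (and $2N/(N+2)<q$ when $N=2$, which keeps both $R^{N-2}$ and $R^{N-q}$ strictly below $R^N$), we get $N-2<N$ and $N-q<N$, so
\begin{equation*}
J(u_R)\le \tfrac12\xi_0^2\|\nabla\eta\|_2^2\,R^{N-2}+\tfrac1q\xi_0^q\|\nabla\eta\|_q^q\,R^{N-q}-\big(G(\xi_0)\omega_N-C'\big)R^N \longrightarrow -\infty
\end{equation*}
as $R\to+\infty$, provided one first fixes the shape of $\eta$ so that $C'<G(\xi_0)\omega_N$ — this is arranged by taking $\eta$ whose transition layer $B_2\setminus B_1$ is thin (or, cleaner, by replacing $B_1,B_2$ with $B_R,B_{R+1}$ so the layer volume is $O(R^{N-1})=o(R^N)$).

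The remaining step is to pass from ``$J$ is unbounded below on $X$'' to ``$\inf_{\mathcal D(c)}J\to-\infty$ as $c\to+\infty$''. Given $M>0$, pick $u=u_R$ as above with $J(u)<-M$; let $c_0^2:=\int_{\mathbb R^N}u^2\,dx = \xi_0^2 R^N\|\eta\|_2^2$. Then for every $c\ge c_0$ we need a competitor in $\mathcal D(c)$ with energy below $-M$. Since $\mathcal D(c)\supset\mathcal D(c_0)\ni u$ whenever $c\ge c_0$ (the constraint is an inequality, $\int u^2\le c^2$), we directly get $\inf_{\mathcal D(c)}J\le J(u)<-M$ for all $c\ge c_0$. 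As $M$ was arbitrary, $\lim_{c\to+\infty}\inf_{\mathcal D(c)}J=-\infty$, which is the claim. (Note this uses in an essential way that $\mathcal D(c)$ is the \emph{closed ball}, not the sphere $\mathcal S(c)$; this is exactly why the proposition is stated for $\inf_{\mathcal D(c)}J$.)

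The main obstacle I anticipate is purely bookkeeping: making sure the positive bulk term in $\int G(u_R)$ strictly beats the negative contribution from the transition region together with the two gradient terms, uniformly as $R\to\infty$. This is why I would use the thin-annulus cutoff ($\eta\equiv1$ on $B_R$, supported in $B_{R+1}$) so that the bad region has volume $O(R^{N-1})$, making the gradient terms $O(R^{N-1})$ as well ($\|\nabla\eta\|$ is then $O(1)$ on a unit-width shell), and then $N-1<N$ settles everything without needing $C'<G(\xi_0)\omega_N$ by hand. One should also double-check that $u_R\in D^{1,q}(\mathbb R^N)$ and $H^1(\mathbb R^N)$ — immediate since $u_R$ is Lipschitz with compact support — and that $G(u_R)\in L^1$, which holds because $u_R$ takes values in a compact set on which $G$ is bounded, so there is no interference from the strongly sublinear behaviour of $g$ near $0$ (the function $u_R$ is bounded away from $0$ on most of its support and the set where it is small has finite measure). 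No deeper compactness or regularity input is needed; the whole proof is a one-parameter test-function computation.
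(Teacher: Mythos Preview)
Your proposal is correct. The thin-annulus plateau function you settle on is exactly the test function the paper uses in Lemma~\ref{Lemmam(c)<0}, and your observation that $G(u_R)\in L^1(\mathbb R^N)$ because $u_R$ is bounded with compact support is precisely what lets you bypass the strongly sublinear regime and work with $J$ directly.

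The route, however, differs from the paper's. The paper deduces Proposition~\ref{prop2} from Lemma~\ref{Lemmam(c)<0} (which shows $m_\varepsilon(c)\to-\infty$ for the \emph{perturbed} functional $J_\varepsilon$, uniformly in $\varepsilon$) together with the convergence $m_\varepsilon(c)\to m(c)$ extracted from the proof of Theorem~\ref{theorem1} (Remark~\ref{re111}(i)). Concretely, the paper fixes $u_R$ with $\int G(u_R)\ge 1$, rescales it to $u_c:=u_R(\|u_R\|_2^{2/N}c^{-2/N}\,\cdot\,)\in\mathcal S(c)$, and shows $J_\varepsilon(u_c)\to-\infty$ as $c\to\infty$ by comparing the powers $c^{2(N-2)/N}$, $c^{2(N-q)/N}$ against $c^2$. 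You instead send $R\to\infty$ and exploit the monotone inclusion $\mathcal D(c_0)\subset\mathcal D(c)$ for $c\ge c_0$. Your argument is more self-contained: it avoids the perturbed functional entirely and does not rely on the nontrivial limit $m_\varepsilon(c)\to m(c)$, which in the paper requires the full convergence analysis behind Theorem~\ref{theorem1}. The paper's detour is natural only because Lemma~\ref{Lemmam(c)<0} was already in place for other purposes; for Proposition~\ref{prop2} alone, your direct computation is the cleaner path. Note also that the paper's rescaling lands in $\mathcal S(c)$, so its argument would equally give $\inf_{\mathcal S(c)}J\to-\infty$, whereas your monotonicity step genuinely uses that $\mathcal D(c)$ is a ball rather than a sphere.
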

When $g$ is as in \eqref{Logp}, we also have the following non-existence result.
\begin{theorem}\label{theorem2}
Let $g$ be given by \eqref{Logp} with $\alpha>0, \mu<0$, and $2<p \leq q^{\prime}$ if $N\geq3$, $p>2$ if $N=2$. A solution $(u, \lambda) \in X \times(0, \infty)$ to \eqref{Equation} such that $G(u) \in L^1\left(\mathbb{R}^N\right)$ exists if and only if $-\frac{\alpha p}{p-2} e^{-p / 2}<\mu$.
\end{theorem}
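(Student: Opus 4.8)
The plan is to prove the two implications separately; both rely on computing, for $g$ as in \eqref{Logp},
\[
G(s)=\int_0^s g(t)\,dt=\frac{\alpha}{2}\,s^2\bigl(\ln s^2-1\bigr)+\frac{\mu}{p}\,|s|^p ,
\]
which shows $G$ depends on $s$ only through $|s|$, is negative for $0<|s|$ small, and tends to $-\infty$ as $|s|\to\infty$ (because $\mu<0$ and $p>2$). The preliminary fact I would isolate first is that $\max_{s\in\mathbb{R}}G(s)>0$ if and only if $\mu>-\frac{\alpha p}{p-2}e^{-p/2}$. Indeed, writing $t=s^2$, the statement that $G(s)>0$ for some $s$ is equivalent to $\sup_{t>0}H(t)>0$ with $H(t)=\frac{\alpha}{2}(\ln t-1)+\frac{\mu}{p}\,t^{(p-2)/2}$; since $H(t)\to-\infty$ both as $t\to0^+$ and as $t\to\infty$, and $H$ has a unique critical point $t_*$, which is therefore its global maximum, determined by $t_*^{(p-2)/2}=\frac{\alpha p}{(-\mu)(p-2)}$, substitution yields $\sup_{t>0}H(t)=\frac{\alpha}{p-2}\ln\frac{\alpha p}{(-\mu)(p-2)}-\frac{\alpha p}{2(p-2)}$, which is positive exactly when $\frac{\alpha p}{(-\mu)(p-2)}>e^{p/2}$.

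For the necessity of the bound, suppose $(u,\lambda)\in X\times(0,\infty)$ solves \eqref{Equation} with $G(u)\in L^1(\mathbb{R}^N)$; then $u\not\equiv0$, as $u$ satisfies the mass constraint with $c>0$. I would invoke the Pohozaev identity for the $(2,q)$-Laplacian,
\[
\frac{N-2}{2}\int_{\mathbb{R}^N}|\nabla u|^2\,dx+\frac{N-q}{q}\int_{\mathbb{R}^N}|\nabla u|^q\,dx+\frac{N\lambda}{2}\int_{\mathbb{R}^N}u^2\,dx=N\int_{\mathbb{R}^N}G(u)\,dx ,
\]
whose left-hand side is strictly positive since $N\ge2$, $q<N$ and $\lambda>0$. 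Hence $\int_{\mathbb{R}^N}G(u)\,dx>0$, which forces $G(\xi_0)>0$ for some $\xi_0\neq0$, i.e.\ $\max_{s\in\mathbb{R}}G(s)>0$; by the preliminary fact, $\mu>-\frac{\alpha p}{p-2}e^{-p/2}$.

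For the sufficiency, assume $\mu>-\frac{\alpha p}{p-2}e^{-p/2}$ and verify $(g_0)$--$(g_4)$ for $g$. Here $(g_0)$ is immediate. Since $\mu<0$, $g(s)<0$ for $0<s$ small and, as $\mu|s|^{p-2}s$ eventually prevails over $\alpha s\ln s^2$, also for $s$ large, so $g^+$ is supported in a bounded subset of $(0,\infty)$; because $g$ is odd, $G_+$ is even, hence $G_+$ vanishes near $0$ and is bounded on $\mathbb{R}$, which gives $(g_1)$ and $(g_3)$. The power term dominates at infinity, so $|g(s)|\lesssim|s|^{p-1}$, which gives $(g_2)$ since $p\le q'$ when $N\ge3$ (the exponential bound being trivial when $N=2$). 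Finally $(g_4)$ is exactly the statement $\max_{s\in\mathbb{R}}G(s)>0$, true by the preliminary fact and our assumption on $\mu$. Theorem~\ref{theorem1} then provides $\bar c\ge0$ such that, fixing any $c>\bar c$, there are $\lambda>0$ and $u\in\mathcal{S}(c)$ with $(u,\lambda)$ a solution to \eqref{Equation} and $J(u)=\min_{\mathcal{D}}J\in(-\infty,0)$. That $G(u)\in L^1(\mathbb{R}^N)$ follows because, $G_+$ being bounded and zero near $0$, for a suitable $\delta>0$ we have $\int_{\mathbb{R}^N}G(u)^+\,dx\le\int_{\mathbb{R}^N}G_+(u)\,dx\le\|G_+\|_{\infty}\,\bigl|\{|u|>\delta\}\bigr|<\infty$, while finiteness of $J(u)$ forces $\int_{\mathbb{R}^N}G(u)^-\,dx<\infty$.

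The step I expect to be the main obstacle is the rigorous justification of the Pohozaev identity for a generic solution $u\in X$: its derivation requires testing the equation against $x\cdot\nabla u$, hence local $C^{1,\beta}$-regularity and suitable decay of $u$ obtained by bootstrapping from the growth condition $(g_2)$, and it is most delicate in the critical case $p=q'$ for $N\ge3$. By contrast, the one-variable optimisation yielding the constant $e^{-p/2}$ and the sign bookkeeping needed to check $(g_1)$--$(g_3)$ for the logarithmic nonlinearity are routine.
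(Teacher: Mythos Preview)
Your proposal is correct and follows the same route as the paper: reduce existence to the validity of $(g_4)$, using the Pohozaev identity for necessity and Theorem~\ref{theorem1} for sufficiency, then characterise $(g_4)$ via a one-variable optimisation yielding the threshold $-\frac{\alpha p}{p-2}e^{-p/2}$. Your account is in fact more detailed than the paper's terse proof (you explicitly verify $(g_1)$--$(g_3)$ and the integrability of $G(u)$, and the paper's proof contains an apparent typo writing $(g_2)$ where $(g_4)$ is meant), but the strategy is identical.
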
}
{Equation \eqref{Equation}} presents several challenges.  On one hand, for the $(2,q)$-Laplacian operator, due to the appearance of $q$-Laplacian term, the working space $X$ is not an {Hilbertian} space. {This} makes some estimates more difficult and complicated. For example, it is challenging to apply the {Br\'ezis}-Lieb lemma to bounded Palais-Smale sequences.  This issue is addressed in the proof of Lemma~\ref{LemmaStrongconver} in Section~\ref{Sec3}. On the other hand, {the nonlinearity $g$} is strongly sublinear near zero, this in particular implies that the corresponding functional  $J$ is not well-defined {in} the {working} space $X$. To handle this, we consider a series of perturbed functionals as {in \eqref{Functionalperturbed} below}. Then, in
Lemma~\ref{LemmaBoundedbelow}, we are able to prove that the solutions to these perturbed problems are bounded in $X$ with respect to $\varepsilon$. Besides, their weak limits are minimizers of the original functional in the set $\mathcal{D}(c)$. Importantly, the solutions to the perturbed problems belong to the set $\mathcal{S}(c)$, which is crucial when proving that the solutions {of} the original problem also belong to $\mathcal{S}(c)$.

	Given functions $f_1,\, f_2: \mathbb{R} \rightarrow \mathbb{R}$, we introduce the following notation:
$f_1\lesssim f_2$ provided that $f_1(s) \leq Cf_2(s)$
for  all $s\in\mathbb R$, where $C$ is a positive constant independent of $s$.
	
	Let $g_{+}(s)=G_{+}^{\prime}(s)$, $g_{-}(s):=g_{+}(s)-g(s)$, and $G_{-}(s):=G_{+}(s)-G(s) \geq 0$ for $s \in \mathbb{R}$.  Thus, $G_{+}(u) \in L^1\left(\mathbb{R}^N\right)$ for all $u \in X$
in view of $(g_1)$ and $(g_3)$. However, $G_{-}(u)$ may not be integrable unless $G_{-}(s) \lesssim|s|^2$ for small $|s|$. In order to overcome this difficulty, for every $\varepsilon \in(0,1)$, let us take an even
	function $\varphi_{\varepsilon}: \mathbb{R} \rightarrow[0,1]$ such that $\varphi_{\varepsilon}(s)=|s| / \varepsilon$ for $|s| \leq \varepsilon, \varphi_{\varepsilon}(s)=1$ for $|s| \geq \varepsilon$. Now we introduce a new perturbed functional
	\begin{equation}\label{Functionalperturbed}
		J_{\varepsilon}(u)=\frac{1}{2} \int_{\mathbb{R}^N}|\nabla u|^2 d x+\frac{1}{q} \int_{\mathbb{R}^N}|\nabla u|^q d x+\int_{\mathbb{R}^N} G_{-}^{\varepsilon}(u) d x-\int_{\mathbb{R}^N} G_{+}(u) d x,
	\end{equation}
where $G_{-}^{\varepsilon}(s)=\int_0^s \varphi_{\varepsilon}(t) g_{-}(t) d t, s \in \mathbb{R}$.
Since $G_{-}^{\varepsilon}(s) \leq c_{\varepsilon}|s|^2$ for every $|s| \leq 1$ and some constant $c_{\varepsilon}>0$ depending only on $\varepsilon>0$, clearly
$J_{\varepsilon}$  is of class $\mathcal{C}^1(X)$
for any $\varepsilon \in(0,1)$.
{
\begin{remark}
	When $N=2$, it is natural to extend  assumption $(g_2)$ to the nonlinearity with exponential critical growth at infinity
	\begin{equation*}
		 \lim_{|s| \to \infty} \frac{g(s)}{e^{\alpha s^2}} = 0, \quad \text{for all} \,\, \alpha > 4\pi.
	\end{equation*}
	Under the above assumption, we can indeed prove
	the existence of a Palais-Smale sequence for $\left.J_{\varepsilon}\right|_{\mathcal{D}(c)}$, but the convergence of such sequence in $X$ is a very delicate problem, which at the moment we could not solve.
\end{remark}
}
	Now, we turn  {back to the main problem} of finding multiple normalized solutions (in fact, infinitely many)
{for}~\eqref{Equation}. {To this aim}, we need to modify both the assumptions {on} $g$ and our approach.

We assume that the right-hand side in \eqref{Equation} is given {in the form}
\begin{equation}\label{Eqg2}	
	g(s)=f(s)-a(s),
\end{equation}
together with the following assumptions {on $a$ and $f$}, where $F(s)=\int_0^s f(t) d t$, $A(s)=\int_0^s a(t) d t$, $ F_{+}$ is defined via \eqref{G+} replacing $g$ with $f$, and $f_{+}=F_{+}^{\prime}$.\\
(A) $A \in \mathcal{C}^1(\mathbb{R})$ is an $N$-function that satisfies the $\Delta_2$ and $\nabla_2$ conditions globally and such that $\lim \limits _{s \rightarrow 0} A(s) / s^2=\infty$ and $s \mapsto a(s) s$ is convex {in $\mathbb R$}.\\
$(f_0)$ $f: \mathbb{R} \rightarrow \mathbb{R}$ is continuous and odd.\\
$(f_1)$ $\lim \limits _{s \rightarrow 0} f_{+}(s) / s=0$.\\
$(f_2)$ If $N \geq 3$, then $|f(s)| \lesssim a(s)+|s|+|s|^{q^{\prime}-1}$, {again with ${q^{\prime}}:=\max \left\{2^*, q^*\right\}$.}\\
If $N=2$, then for all $m \geq 2$ and $\alpha>4 \pi$ there holds $|f(u)| \lesssim a(s)+|s|+|s|^{m-1}\left(e^{\alpha s^2}-1\right)$.
\\
$(f_3)$ $\lim \sup\limits _{|s| \rightarrow \infty} f_{+}(s) /|s|^{\bar{q}-1}<\infty$ and
\begin{equation}\label{eqeta}
	\eta:=\limsup _{|s| \rightarrow \infty} \frac{F_{+}(s)}{|s|^{\bar{q}}}<\infty .
\end{equation}
Let us define
\begin{equation}\label{W}
W:=\left\{u \in X: A(u) \in L^1\left(\mathbb{R}^N\right)\right\}.
\end{equation}
To present our results, we introduce some notations. For a subgroup $\mathcal{O} \subset \mathcal{O}(N)$,
\begin{equation}\label{WO}
W_{\mathcal{O}}:=\{u \in W: u(g \cdot)=u \text { for all } g \in \mathcal{O}\} .
\end{equation}
If $N=4$ or $N \geq 6$, in order to find non-radial solutions
{of  \eqref{Equation}, following  \cite{Mederski2020Nonlinearity,Mederski2023sublinear}, we
fix  $M$, with} $2 \leq M \leq N / 2$ such that $N-2 M \neq 1$ and {we put} $\tau(x)=\left(x_2, x_1, x_3\right)$ for $x=\left(x_1, x_2, x_3\right) \in \mathbb{R}^M \times \mathbb{R}^M \times \mathbb{R}^{N-2 M}$. Then, let
$$
{X}_\tau:=\left\{u \in W: u \circ \tau=-u\right\},	
$$
$$
\mathcal{X}:=\left\{u \in W_{\mathcal{O}(M) \times \mathcal{O}(M) \times \mathcal{O}(N-2 M)}: u \circ \tau=-u\right\}={X}_\tau\cap W_{\mathcal{O}(M) \times \mathcal{O}(M) \times \mathcal{O}(N-2 M)},
$$
here we agree that the components corresponding to $N-2 M$ do not exist when $N=2 M$ and observe that $\mathcal{X} \cap W_{\mathcal{O}(N)}=\{0\}$. For $2<p<2^*$,
{the number $C_{N, p}$ denotes} the best constant in the Gagliardo-Nirenberg inequality of Lemma \ref{LemmaGN}.

Our multiplicity result reads as follows.
\begin{theorem}\label{thm2muty}
	Let $g$ be {as in} \eqref{Eqg2}.
{Assume} that (A), $(f_0)-(f_3)$, and
	\begin{equation}\label{etaC}
			2(\eta+\delta) C_{N, \bar{q}}^{\bar{q}} c^{\bar{q} \left(1-\delta_{\bar{q}}\right)} <1
	\end{equation}
	hold, where $\eta$ is defined in \eqref{eqeta}. Then there exist infinitely many solutions $(u, \lambda) \in W_{\mathcal{O}(N)} \times \mathbb{R}$ to \eqref{Equation}, one of which say, $(\bar{u}, \bar{\lambda})$ having the property that $J(\bar{u})=\min _{\mathcal{S}(c) \cap W} {J(u)}$. If, in addition, $N=4$ or $N \geq 6$, then there exist infinitely many solutions $(v, \mu) \in \mathcal{X} \times \mathbb{R}$, one of which say, $(\bar{v}, \bar{\mu})$ having the property that $J(\bar{v})=\min _{\mathcal{S}(c) \cap \mathcal{X}} J$.
\end{theorem}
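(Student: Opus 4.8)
The plan is to obtain both families of solutions as symmetric critical points, produced by the Krasnoselskii genus, of the functional $J$ restricted to the $L^2$-sphere inside the Orlicz--Sobolev space $W$ (respectively its symmetric subspaces). First I would fix the functional-analytic framework. By assumption (A), $A$ is an $N$-function satisfying $\Delta_2$ and $\nabla_2$ globally, so the Orlicz space $L^A(\mathbb R^N)$ is separable and reflexive, $\mathcal C_0^\infty$ is dense in it, and $W=X\cap L^A$, equipped with $\|u\|_W:=\|u\|_{H^1}+\|\nabla u\|_q+\|u\|_{L^A}$, is a reflexive Banach space embedding continuously into $H^1(\mathbb R^N)\cap D^{1,q}(\mathbb R^N)$ and into $L^A$. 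The $\Delta_2$ condition makes $u\mapsto\int_{\mathbb R^N}A(u)\,dx$ of class $\mathcal C^1$ on $W$, and $(f_0)$--$(f_2)$ together with Lemma~\ref{LemmaGN} make $u\mapsto\int_{\mathbb R^N}F(u)\,dx$ of class $\mathcal C^1$ on $W$; hence $J\in\mathcal C^1(W)$. Since $f$ is odd and $A$ is even, $g=f-a$ is odd, $G=F-A$ is even, and $J$ is an even functional; moreover $W\hookrightarrow L^2(\mathbb R^N)$, so $\mathcal S(c)\cap W$, as well as $\mathcal S(c)\cap W_{\mathcal O(N)}$ and $\mathcal S(c)\cap\mathcal X$, are symmetric $\mathcal C^1$-submanifolds of codimension one.

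Next I would establish coercivity and compactness. Because $-\int_{\mathbb R^N}G(u)=\int_{\mathbb R^N}A(u)-\int_{\mathbb R^N}F(u)\ge-\int_{\mathbb R^N}F_+(u)$, combining $(f_1)$, $(f_3)$, the Gagliardo--Nirenberg inequality of Lemma~\ref{LemmaGN}, and the smallness condition~\eqref{etaC} shows that, for $u\in\mathcal S(c)$, the term $\int_{\mathbb R^N}F_+(u)$ is strictly dominated by $\tfrac12\|\nabla u\|_2^2+\tfrac1q\|\nabla u\|_q^q$ up to an additive constant; hence $J$ is bounded below and coercive on $\mathcal S(c)\cap W$ (coercive also along $\|\cdot\|_{L^A}$, since under $\Delta_2$ the functional $\int A(u)$ controls $\|u\|_{L^A}$). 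Then I would verify the Palais--Smale condition for $J$ restricted to $\mathcal S(c)\cap W_{\mathcal O(N)}$: a $(PS)$ sequence is bounded in $W$ by coercivity, so $u_n\rightharpoonup u$ in $W$, $u_n\to u$ in $L^p$ for $2<p<2^*$ and a.e.\ by radial compactness, and the associated multipliers $\lambda_n\to\lambda$; testing $J'(u_n)+\lambda_n u_n\to0$ against $u_n-u$ and using the $(S_+)$-type monotonicity of $-\Delta-\Delta_q$ --- the delicate point handled in Lemma~\ref{LemmaStrongconver}, where the convexity of $s\mapsto a(s)s$ and the $\Delta_2$ condition serve to treat the $a$-term and to recover convergence in $L^A$ --- yields $u_n\to u$ strongly in $W$; in particular $\|u_n\|_2\to\|u\|_2$, so $u\in\mathcal S(c)$ and $(PS)$ holds.

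With $\Phi:=J|_{\mathcal S(c)\cap W_{\mathcal O(N)}}$ even, of class $\mathcal C^1$, bounded below, coercive and satisfying $(PS)$, and $\mathcal S(c)\cap W_{\mathcal O(N)}$ an infinite-dimensional symmetric $\mathcal C^1$-manifold containing symmetric compact subsets of arbitrary Krasnoselskii genus (intersect it with finite-dimensional spaces of radial $\mathcal C_0^\infty$-functions), I would apply the standard symmetric minimax principle: the levels $c_k:=\inf_{A\in\Gamma_k}\sup_{u\in A}\Phi(u)$, with $\Gamma_k$ the family of compact symmetric subsets of genus $\ge k$, satisfy $-\infty<c_1\le c_2\le\cdots$, each $c_k$ is a critical value of $\Phi$, and whenever consecutive values coincide the corresponding critical set has infinite genus; in every case $\Phi$ has infinitely many critical points. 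By Palais' principle of symmetric criticality together with the Lagrange multiplier rule, each of them yields a solution $(u,\lambda)\in W_{\mathcal O(N)}\times\mathbb R$ of~\eqref{Equation}. Since $\Phi$ is even, $c_1=\min_{\mathcal S(c)\cap W_{\mathcal O(N)}}J$, attained at some $\bar u$; and Schwarz symmetrization --- which preserves $\|\cdot\|_2$, $\int A(\cdot)$ and $\int F(\cdot)$ and does not increase $\|\nabla\cdot\|_2$ or $\|\nabla\cdot\|_q$ --- gives $\min_{\mathcal S(c)\cap W_{\mathcal O(N)}}J=\min_{\mathcal S(c)\cap W}J$, so that $J(\bar u)=\min_{\mathcal S(c)\cap W}J$, which proves the first part.

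For $N=4$ or $N\ge6$ I would run the same scheme in the closed subspace $\mathcal X$: the invariance under $\mathcal O(M)\times\mathcal O(M)\times\mathcal O(N-2M)$ with $2\le M\le N/2$ and $N-2M\ne1$ restores the compact embedding $\mathcal X\hookrightarrow L^p$ for $2<p<2^*$ by the argument of \cite{Mederski2020Nonlinearity,Mederski2023sublinear}, so $(PS)$ holds on $\mathcal S(c)\cap\mathcal X$, while the antisymmetry $u\circ\tau=-u$ gives $\mathcal X\cap W_{\mathcal O(N)}=\{0\}$, so these solutions are genuinely different from the radial ones; the genus argument then produces infinitely many solutions $(v,\mu)\in\mathcal X\times\mathbb R$ with $\bar v$ minimizing $J$ on $\mathcal S(c)\cap\mathcal X$. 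The hard part will be the compactness step: since $W$ is not Hilbertian and carries the additional Orlicz norm, the usual Br\'ezis--Lieb splitting of bounded $(PS)$ sequences is unavailable, and one must combine the $(S_+)$-monotonicity of $-\Delta-\Delta_q$ (Lemma~\ref{LemmaStrongconver}) with the $\Delta_2$-structure of $A$, the convexity of $s\mapsto a(s)s$, and the symmetry-induced compactness, while simultaneously preventing any loss of $L^2$-mass so that the weak limits remain on $\mathcal S(c)$ rather than only in $\mathcal D(c)$.
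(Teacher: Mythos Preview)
Your overall architecture matches the paper's: establish $J\in\mathcal C^1(W)$, prove boundedness below on $\mathcal S(c)\cap W$, verify the Palais--Smale condition on the symmetric constraint, apply the Krasnoselskii-genus minimax of Theorem~\ref{Thmmupt}, and use Ekeland plus Schwarz rearrangement for the least-energy statement. That skeleton is correct, and the construction of symmetric sets of large genus (your ``intersect with finite-dimensional spaces of radial $\mathcal C_0^\infty$ functions'') is essentially the content of Lemma~\ref{LemGamma}.

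There is, however, a genuine gap in the Palais--Smale step. You invoke only the \emph{standard} radial (or $\mathcal O$-invariant) compactness $W_{\mathcal O}\hookrightarrow\hookrightarrow L^p$ for $2<p<2^*$, and then try to close by testing $J'(u_n)+\lambda_n u_n\to 0$ against $u_n-u$. But that test leaves you with the term $\lambda_n\int_{\mathbb R^N}u_n(u_n-u)\,dx\to\lambda\bigl(c^2-\|u\|_2^2\bigr)$, which you cannot kill with only weak $L^2$ convergence; the same obstruction blocks the $|s|$-part of $f$ and $a$ in $(f_2)$, so the $(S_+)$ argument does not close and you cannot conclude $u\in\mathcal S(c)$. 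Your appeal to Lemma~\ref{LemmaStrongconver} is also misplaced: that lemma treats the \emph{perturbed} functional $J_\varepsilon$ on $\mathcal D(c)$, uses translations (which destroy the $\mathcal O$-symmetry), and makes no reference to $a$ or the Orlicz norm.

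The key idea you are missing is that the Orlicz hypothesis $\lim_{s\to 0}A(s)/s^2=\infty$ in (A) \emph{upgrades} the compact embedding down to $L^2$: for each $\varepsilon>0$ one has $|s|^2\le\varepsilon A(s)$ near $0$, so a sequence bounded in $W_{\mathcal O}$ that vanishes in the Lions sense actually converges to $0$ in $L^2$ (Lemma~\ref{LemmaA(u)lions} and Corollary~\ref{CorLoc}). Hence $W_{\mathcal O}\hookrightarrow\hookrightarrow L^m$ for all $m\in[2,q')$, which gives $u\in\mathcal S(c)$ for free. With this in hand, the paper obtains strong $W$-convergence not by testing against $u_n-u$ but via the Nehari identity (Lemma~\ref{LemA(u)PS}): testing the equation against $u_n$ and against $u$ and using that $\int f_+(u_n)u_n-\lambda|u_n|^2\,dx$ converges (now legitimate because $m=2$ is included), one gets equality of the limits of $\int|\nabla u_n|^2+|\nabla u_n|^q+a(u_n)u_n+f_-(u_n)u_n$ with the corresponding quantity for $u$; Fatou on each nonnegative summand then forces term-by-term convergence, and the $\nabla_2$ bound $a(s)s\ge CA(s)$ yields $\int A(u_n)\to\int A(u)$, hence $\|u_n-u\|_V\to 0$ by Lemma~\ref{LemmaA(u)conver}~(iii).
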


{Let us now present  the  main steps in
proving} Theorem \ref{thm2muty}. To prove the multiplicity results stated in Theorem \ref{thm2muty}, we work in the subspace $W_{\mathcal{O}}$ of $X$ where the functional $J$ is well-defined. From Corollary \ref{CorLoc}, we have the compact embedding $W_{\mathcal{O}} \hookrightarrow L^m\left(\mathbb{R}^N\right)$ for all $2\leq m<q^{\prime}$. This result, which is due to Lemma \ref{LemmaA(u)lions}, allows us to show that $\left.J\right|_{{W_{\mathcal{O}}} \cap \mathcal{S}(c)}$ satisfies the Palais-Smale condition, see Lemma \ref{LemA(u)PS}. Then, to obtain { Palais-Smale sequences of $J(u)$}, we make use of classical minimax arguments, see Theorem \ref{Thmmupt} proved in \cite{Jeanjean2019Nonradial}. Furthermore, {the proof of Theorem \ref{Thmmupt} is
based on the construction of suitable} mappings from $\mathbb{S}^{k-1}$ to  $W_{\mathcal{O}} \cap \mathcal{S}(c)$, {given} in Lemma \ref{LemGamma}, which relies on the existence of some special mappings {introduced} in \cite{Berestycki1983infinite}.

The paper is organized as follows. {In Section \ref{Sec2},} we introduce the functional setting and we give some preliminaries. {In Section~\ref{Sec3}, the perturbed problem {\eqref{Equationperturbed}}} was studied. Then, in Section \ref{Sec4}, we prove
Theorem~\ref{theorem1}.  In the last section, we focus on the multiplicity of normalized solutions  of~\eqref{Equation} and prove Theorem~\ref{thm2muty}.

\textbf{Notations:}
For $1 \leq p<\infty$ and $u \in L^p\left(\mathbb{R}^N\right)$, we denote $\|u\|_p:=\left(\int_{\mathbb{R}^N}|u|^p d x\right)^{\frac{1}{p}}$. The Hilbert space $H^1\left(\mathbb{R}^N\right)$ is defined as $H^1\left(\mathbb{R}^N\right):=\left\{u \in L^2\left(\mathbb{R}^N\right): \nabla u \in L^2\left(\mathbb{R}^N\right)\right\}$ with inner product $(u, v):=\int_{\mathbb{R}^N}(\nabla u \nabla v+u v) d x$ and norm $\|u\|:=\left(\|\nabla u\|_2^2+\|u\|_2^2\right)^{\frac{1}{2}}$. Similarly, $D^{1, q}\left(\mathbb{R}^N\right)$ is defined as $D^{1, q}\left(\mathbb{R}^N\right):=\left\{u \in L^{q^*}\left(\mathbb{R}^N\right): \nabla u \in L^q\left(\mathbb{R}^N\right)\right\}$ with the norm $\|u\|_{D^{1, q}\left(\mathbb{R}^N\right)}=\|\nabla u\|_q$.
Recalling $X=H^1\left(\mathbb{R}^N\right) \cap D^{1, q}\left(\mathbb{R}^N\right)$ endowed with the norm $\|u\|_X=\|u\|+\|\nabla u\|_q$. We use $" \rightarrow$ " and $" \rightharpoonup$ " to denote the strong and weak convergence in the related function spaces respectively. $C$ and $C_i$ will be positive constants which may depend on $ N $, $ p $ and $ q $ (but never on $ u $), {whose value is not relevant}. $\langle\cdot, \cdot\rangle$ denote the dual pair for any Banach space and its dual space. Finally, $o_n(1)$ and $O_n(1)$ mean that $\left|o_n(1)\right| \rightarrow 0$ and $\left|O_n(1)\right| \leq C$ as $n \rightarrow \infty$, respectively.

	\section{Preliminaries}\label{Sec2}
	In this section, we introduce some preliminary results that will be useful for proving our main results.

\begin{Lemma}[The Gagliardo-Nirenberg inequality, {\cite[Corollary 2.1]{Weinstein1983GN}}]\label{LemmaGN}
		When $N\geq3$,
		{let $ p \in$ $\left(2, q^*\right)$} and $\delta_p=\frac{N(p-2)}{2 p}$, then there exists a constant $C_{N, p}=\left(\frac{p}{2\left\|W_p\right\|_2^{p-2}}\right)^{\frac{1}{p}}>0$ such that
		\begin{equation}\label{Eq-GN inequality1}
			\|u\|_p \leq C_{N, p}\|\nabla u\|_2^{\delta_p}\|u\|_2^{\left(1-\delta_p\right)} \quad \forall u \in H^1\left(\mathbb{R}^N\right),
		\end{equation}
	where $W_p$ is the unique positive radial solution of $-\Delta W+\left(\frac{1}{\delta_p}-1\right) W=\frac{2}{p \delta_p}|W|^{p-2} W$ {in $\mathbb R^N$.}
	\end{Lemma}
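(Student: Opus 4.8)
The plan is to obtain \eqref{Eq-GN inequality1} together with the value of the optimal constant by Weinstein's variational argument: the proof reduces to a minimization problem, the identification of a minimizer through a known ODE uniqueness theorem, and two Pohozaev-type identities. Throughout we use that $\delta_p<1$, equivalently $p<2^*$, which is what is needed both for \eqref{Eq-GN inequality1} and for the compact embedding below. Set $\theta:=p\delta_p=\tfrac{N(p-2)}{2}$ and, on $H^1(\mathbb R^N)\setminus\{0\}$, consider the functional
\[
\mathcal J(u):=\frac{\|\nabla u\|_2^{\theta}\,\|u\|_2^{\,p-\theta}}{\|u\|_p^p}.
\]
A direct scaling computation shows $\mathcal J$ is invariant under $u\mapsto au$ and $u\mapsto u(b\,\cdot)$ for $a,b>0$, and the classical Gagliardo--Nirenberg interpolation inequality gives $\beta:=\inf_{u\neq 0}\mathcal J(u)\in(0,\infty)$; moreover \eqref{Eq-GN inequality1} holds with a constant $C$ exactly when $C\ge\beta^{-1/p}$. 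Hence it suffices to prove that $\beta$ is attained and to compute it.

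For the existence of a minimizer, I would take a minimizing sequence $(u_n)$ and replace each $u_n$ by its Schwarz symmetrization: the P\'olya--Szeg\H o inequality does not increase $\|\nabla u_n\|_2$ while $\|u_n\|_2$ and $\|u_n\|_p$ are preserved, so we may assume each $u_n$ is nonnegative, radial and radially nonincreasing. Using the two scalings that leave $\mathcal J$ invariant we normalize $\|\nabla u_n\|_2=\|u_n\|_2=1$, so that $\|u_n\|_p^p\to 1/\beta$ and $(u_n)$ is bounded in $H^1_{\mathrm{rad}}(\mathbb R^N)$. Since $H^1_{\mathrm{rad}}(\mathbb R^N)\hookrightarrow L^p(\mathbb R^N)$ is compact for $2<p<2^*$, along a subsequence $u_n\rightharpoonup W$ in $H^1(\mathbb R^N)$ and $u_n\to W$ in $L^p(\mathbb R^N)$; thus $\|W\|_p^p=1/\beta$ and, by weak lower semicontinuity, $\|\nabla W\|_2\le 1$, $\|W\|_2\le 1$. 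Therefore $\mathcal J(W)\le\beta$, which forces equality and $\|\nabla W\|_2=\|W\|_2=1$, so $W$ is a positive, radial, radially nonincreasing minimizer.

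Next I would write the Euler--Lagrange equation $\mathcal J'(W)=0$ and rescale, setting $W_p(x):=aW(bx)$ with $a,b>0$ chosen to absorb the multiplicative Lagrange constant; this gives precisely
\[
-\Delta W_p+\Bigl(\tfrac1{\delta_p}-1\Bigr)W_p=\tfrac{2}{p\delta_p}\,|W_p|^{p-2}W_p .
\]
The uniqueness of the positive radial finite-energy solution of this equation --- the one genuinely nontrivial external input --- is Kwong's theorem, which together with the standard regularity and exponential decay of such solutions makes $W_p$ well defined up to translation. To evaluate $\beta$ I would combine the Nehari identity $\|\nabla W_p\|_2^2+\bigl(\tfrac1{\delta_p}-1\bigr)\|W_p\|_2^2=\tfrac{2}{p\delta_p}\|W_p\|_p^p$, obtained by testing the equation with $W_p$, with the Pohozaev identity; eliminating $\|\nabla W_p\|_2^2$ and $\|W_p\|_p^p$ yields $\|W_p\|_p^p=\tfrac p2\|W_p\|_2^2$ and $\|\nabla W_p\|_2^2=\|W_p\|_2^2$. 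Writing $m:=\|W_p\|_2^2$ and substituting into $\beta=\mathcal J(W_p)$, the numerator becomes $m^{p/2}$ and the denominator $\tfrac p2 m$, so $\beta=\tfrac2p m^{(p-2)/2}=\tfrac2p\|W_p\|_2^{p-2}$ and hence $C_{N,p}=\beta^{-1/p}=\bigl(\tfrac{p}{2\|W_p\|_2^{p-2}}\bigr)^{1/p}$, as claimed.

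The main obstacle is the loss of compactness in the minimization step, since the dilation invariance of $\mathcal J$ allows a minimizing sequence to concentrate or to spread out; this is exactly what the symmetrization-plus-radial-compact-embedding device handles. The second delicate point, which we import from the literature rather than reprove, is Kwong's uniqueness theorem for the limiting semilinear equation --- without it the symbol $W_p$, and therefore the explicit value of $C_{N,p}$, would be ambiguous.
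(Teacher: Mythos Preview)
The paper does not prove this lemma: it is simply quoted from Weinstein's paper (the citation \cite[Corollary~2.1]{Weinstein1983GN}), so there is no ``paper's own proof'' to compare against. Your proposal is a correct and complete reconstruction of Weinstein's original variational argument --- the symmetrized minimizing sequence, the radial compact embedding, the Euler--Lagrange rescaling, Kwong's uniqueness, and the Nehari/Pohozaev computation all check out and yield exactly the stated constant. One minor remark: as you implicitly note, the argument requires $p<2^*$ (so that $\delta_p<1$ and the radial embedding is compact), whereas the lemma as printed allows $p\in(2,q^*)$; when $q>2$ this range exceeds $2^*$, but in the paper the lemma is only invoked with $p=\bar q<2^*$, so the discrepancy is harmless.
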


		\begin{Lemma}[$L^q$-Gagliardo-Nirenberg inequality, {\cite[Theorem 2.1]{agueH3008sharp}}]\label{Lemma-GN inequality2}
		When $N\geq3$,
		let $q \in\left(\frac{2 N}{N+2}, N\right), p \in$ $\left(2, q^*\right)$ and $\nu_{p, q}=\frac{N q(p-2)}{p[N q-2(N-q)]}$. Then there exists a constant $K_{N, p}>0$ such that
		\begin{equation}\label{Eq-GN inequality2}
			\|u\|_p \leq K_{N, p}\|\nabla u\|_q^{\nu_{p, q}}\|u\|_2^{\left(1-\nu_{p, q}\right)}, \quad \forall u \in D^{1, q}\left(\mathbb{R}^N\right) \cap L^2\left(\mathbb{R}^N\right),
		\end{equation}	
		where
	{	$$
		\begin{gathered}
			K_{N, p}=\left(\frac{K}{\frac{1}{q}\left\|D W_{p, q}\right\|_q^q+\frac{1}{2}\left\|W_{p, q}\right\|_2^2}\right), \\
			K=(N q+p q-2 N) \left(\frac{[2(N q-p(N-q))]^{p(N-q)-N q}}{[q N(p-2)]^{N(p-2)}}\right)^{1 /[N q+p q-2 N]},
		\end{gathered}
		$$}
		and $W_{p, q}$ is the unique nonnegative radial solution of the following equation
		$$
		-\Delta_q W+W=\zeta|W|^{p-2} W,\,\,\, x\in \mathbb{R}^N
		$$
		where $\zeta=\|\nabla W\|_q^q+\|W\|_2^2$ is the Lagrangian multiplier.
	\end{Lemma}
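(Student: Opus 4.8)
The plan is to prove the sharp inequality by recasting it as an extremal problem. Set
$$ \frac{1}{K_{N,p}} := \inf_{u \in (D^{1,q}(\mathbb R^N)\cap L^2(\mathbb R^N))\setminus\{0\}} \frac{\|\nabla u\|_q^{\nu_{p,q}}\,\|u\|_2^{1-\nu_{p,q}}}{\|u\|_p}, $$
so that proving the inequality amounts to showing that this infimum is positive and attained. The exponent $\nu_{p,q}$ is forced by scaling: the quotient is automatically invariant under the multiplications $u\mapsto\mu u$, and invariant under the dilations $u\mapsto u(\lambda\cdot)$ precisely when $-N/p=\nu_{p,q}(1-N/q)-(1-\nu_{p,q})N/2$, and solving this linear relation returns the stated value $\nu_{p,q}=\frac{Nq(p-2)}{p[Nq-2(N-q)]}\in(0,1)$ for $p\in(2,q^*)$. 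Positivity of the infimum (equivalently, the inequality with \emph{some} finite constant) I would obtain at once by combining the Sobolev embedding $D^{1,q}(\mathbb R^N)\hookrightarrow L^{q^*}(\mathbb R^N)$ with the H\"older interpolation $\|u\|_p\le\|u\|_2^{1-s}\|u\|_{q^*}^{s}$; by the same scaling uniqueness the interpolation weight $s$ equals $\nu_{p,q}$, so this already secures finiteness of $K_{N,p}$ and reduces the whole matter to sharpness, i.e.\ attainment.

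The heart of the argument is the attainment of the infimum, and this is where I expect the main obstacle: the quotient is invariant under a two-parameter scaling group, so a minimizing sequence may a priori concentrate, spread out, or escape to infinity, and $D^{1,q}(\mathbb R^N)\cap L^2(\mathbb R^N)$ is non-Hilbertian. I would remove these pathologies by symmetrization. Given a minimizing sequence $(u_n)$, normalized via the two scalings so that $\|u_n\|_2=\|u_n\|_p=1$, I replace each $u_n$ by its Schwarz symmetric-decreasing rearrangement $u_n^*$; the P\'olya--Szeg\H{o} inequality gives $\|\nabla u_n^*\|_q\le\|\nabla u_n\|_q$ while $\|u_n^*\|_2=\|u_n^*\|_p=1$, so $(u_n^*)$ is still minimizing and is now radial, nonnegative, and radially nonincreasing. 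The constraint $\|u_n^*\|_p=1$ forces $\|\nabla u_n^*\|_q\to 1/K_{N,p}$, hence $(u_n^*)$ is bounded in $D^{1,q}(\mathbb R^N)\cap L^2(\mathbb R^N)$. A Strauss-type radial compactness lemma for this space then yields, along a subsequence, $u_n^*\to W$ strongly in $L^p(\mathbb R^N)$, so $\|W\|_p=1$ and $W\neq 0$. Finally, convexity of $\xi\mapsto|\xi|^q$ makes $u\mapsto\|\nabla u\|_q$ weakly lower semicontinuous, and likewise $\|\cdot\|_2$, so $W$ realizes the infimum; were $\|W\|_2<1$ one could rescale to strictly decrease the quotient, a contradiction, which pins down $\|W\|_2=1$ and exact minimality.

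It then remains to identify the optimizer and compute the constant. Reading the minimization in the normalized form ``minimize $\frac1q\|\nabla u\|_q^q+\frac12\|u\|_2^2$ subject to $\|u\|_p^p=1$'', the Lagrange multiplier rule gives that the extremal $W$ solves $-\Delta_q W+W=\zeta|W|^{p-2}W$ in $\mathbb R^N$, and testing this equation against $W$ together with the normalization yields $\zeta=\|\nabla W\|_q^q+\|W\|_2^2$, exactly the stated multiplier. Pairing the Nehari identity $\|\nabla W\|_q^q+\|W\|_2^2=\zeta\|W\|_p^p$ with the Pohozaev identity $\frac{N-q}{q}\|\nabla W\|_q^q=-\frac N2\|W\|_2^2+\frac{N}{p}\zeta\|W\|_p^p$ fixes the ratio $\|\nabla W\|_q^q/\|W\|_2^2$ as an explicit function of $N,p,q$ (positive precisely because $p<q^*$); substituting back into the quotient evaluated at the extremal, and carrying out the residual one-parameter scaling optimization, reproduces the closed form of $K_{N,p}$ with the elementary factor $K$ displayed in the statement, the precise algebraic bookkeeping being that of the cited reference. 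The last ingredient, needed for $K_{N,p}$ to be well defined, is the uniqueness of the nonnegative radial ground state $W_{p,q}$ of the displayed quasilinear equation: this is a delicate ODE/PDE uniqueness fact that I would invoke from the literature rather than reprove. (I note that \cite{agueH3008sharp} reaches the same sharp constant by an optimal mass-transport argument that bypasses compactness altogether; the scheme above is the variational alternative.)
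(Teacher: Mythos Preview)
The paper does not prove this lemma at all; it is quoted as a known result from \cite[Theorem~2.1]{agueH3008sharp} and used as a black box in the subsequent estimates. There is therefore nothing in the present paper to compare your attempt against.

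That said, your outline is a coherent sketch of the classical variational route to sharp Gagliardo--Nirenberg inequalities: normalize by the two-parameter scaling, symmetrize, extract a minimizer via radial compactness, and identify it through the Euler--Lagrange equation together with the Nehari/Pohozaev system. As you yourself observe at the end, the cited source \cite{agueH3008sharp} takes a different path, deriving the inequality by an optimal mass-transport argument that sidesteps compactness and concentration altogether and reads off the sharp constant from the transport map. Your scheme is a legitimate alternative, but it leans on two nontrivial external facts that you correctly flag rather than prove: a Strauss-type compact embedding for radial functions in $D^{1,q}(\mathbb{R}^N)\cap L^2(\mathbb{R}^N)$, and uniqueness of the nonnegative radial ground state of $-\Delta_q W+W=\zeta|W|^{p-2}W$. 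Both are available in the literature, so the sketch is sound, though the algebraic bookkeeping leading to the explicit form of $K$ is left entirely to the reference.
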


	\begin{Lemma}[The Trudinger-Moser inequality, {\cite[Lemma 2.1]{Cao1992TM}}]\label{LemmaTrudinger-Moser}
		If $\alpha>0$ and $u \in H^1\left(\mathbb{R}^2\right)$, then
		$$
		\int_{\mathbb{R}^2}\left(e^{\alpha u^2}-1\right) \mathrm{d} x<\infty .
		$$
		Moreover, if $\|\nabla u\|_2 \leq 1,\|u\|_2 \leq M<\infty$ with $M>0$ and $0<\alpha<4 \pi$, then there exists a constant $C>0$, which depends only on $M$ and $\alpha$, such that
		$$
		\int_{\mathbb{R}^2}\left(e^{\alpha u^2}-1\right) \mathrm{d} x \leq C\left(M, \alpha\right) .
		$$
	\end{Lemma}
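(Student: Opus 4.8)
The plan is to establish the two assertions in reverse order: first the uniform subcritical bound (the ``Moreover'' part, for $0<\alpha<4\pi$ and $\|\nabla u\|_2\le1$, $\|u\|_2\le M$), and then to deduce the mere finiteness for \emph{all} $\alpha>0$ as a consequence. For the uniform bound I would reduce everything to the classical Moser--Trudinger inequality on the unit ball $B_1\subset\mathbb{R}^2$ by symmetrization. Replacing $u$ by its Schwarz symmetric decreasing rearrangement $u^{*}$, the P\'olya--Szeg\H{o} inequality gives $\|\nabla u^{*}\|_{2}\le\|\nabla u\|_{2}\le1$ and $\|u^{*}\|_{2}=\|u\|_{2}\le M$, while equimeasurability of $|u|$ and $u^{*}$ applied to the increasing map $t\mapsto e^{\alpha t^{2}}-1$ yields $\int_{\mathbb{R}^{2}}(e^{\alpha u^{2}}-1)\,dx=\int_{\mathbb{R}^{2}}(e^{\alpha (u^{*})^{2}}-1)\,dx$. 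Hence I may assume throughout that $u=u(r)\ge0$ is radial and nonincreasing, and split $\mathbb{R}^2=B_1\cup(\mathbb{R}^2\setminus B_1)$.

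On the outer region $\mathbb{R}^2\setminus B_1$ the Strauss radial lemma gives $|u(r)|\le Cr^{-1/2}\|u\|_{H^1}$ for $r\ge1$, so $u^2\le\Lambda^2$ there with $\Lambda=C(1+M)$ depending only on $M$; using that $t\mapsto(e^{\alpha t}-1)/t$ is nondecreasing one gets $e^{\alpha u^2}-1\le\frac{e^{\alpha\Lambda^2}-1}{\Lambda^2}\,u^2$, which integrates to $\int_{\mathbb{R}^2\setminus B_1}(e^{\alpha u^2}-1)\,dx\le C(M,\alpha)$. The inner region carries the main difficulty: on $B_1$ the function $u$ need not vanish on $\partial B_1$, so Moser's inequality for $H_0^1(B_1)$ does not apply directly. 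I would peel off the boundary value, setting $v:=u-u(1)\in H_0^1(B_1)$, $v\ge0$, with $\|\nabla v\|_{L^2(B_1)}\le1$ and $u(1)\le\Lambda$. For any $\theta>0$, Young's inequality gives $u^{2}\le(1+\theta)v^{2}+(1+\tfrac1\theta)\,u(1)^{2}$, whence
\[
\int_{B_1}e^{\alpha u^2}\,dx\le e^{\alpha(1+1/\theta)\Lambda^2}\int_{B_1}e^{(1+\theta)\alpha v^2}\,dx .
\]
Since $\alpha<4\pi$, I can choose $\theta$ small enough that $(1+\theta)\alpha<4\pi$, and then the classical subcritical Moser--Trudinger inequality on $B_1$ bounds the last integral by a constant depending only on $\alpha$; the prefactor depends only on $M$ and $\alpha$. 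Adding the two regions proves the uniform bound. The crux is precisely this loss of zero boundary data on $B_1$, and the device that resolves it is the subtraction of $u(1)$ together with absorbing the multiplicative loss $(1+\theta)$ into the subcritical gap $4\pi-\alpha$.

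Finally I would deduce finiteness for arbitrary $\alpha>0$ and arbitrary $u\in H^1(\mathbb{R}^2)$ from the uniform bound. On $\{|u|\le1\}$, monotonicity of $(e^{\alpha t}-1)/t$ gives $e^{\alpha u^2}-1\le(e^{\alpha}-1)u^2$, so this contribution is at most $(e^{\alpha}-1)\|u\|_2^2<\infty$. On the set $B:=\{|u|>1\}$, Chebyshev's inequality yields the finite measure $|B|\le\|u\|_2^2$. I would then pick $\phi\in C_c^{\infty}(\mathbb{R}^2)$ with $\|\nabla(u-\phi)\|_2$ so small that, after the splitting $u^2\le(1+\theta)(u-\phi)^2+(1+\tfrac1\theta)\phi^2$ and H\"older's inequality \emph{on the finite-measure set} $B$ with conjugate exponents $r,r'$, the exponential factor involving $u-\phi$ has exponent $r(1+\theta)\alpha\|\nabla(u-\phi)\|_2^2<4\pi$. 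The uniform bound already proved controls $\int_B e^{r(1+\theta)\alpha(u-\phi)^2}\,dx\le|B|+\int_{\mathbb{R}^2}(e^{r(1+\theta)\alpha(u-\phi)^2}-1)\,dx<\infty$, while the $\phi$-factor is finite because $\phi$ is bounded and $|B|<\infty$. Combining the two regions gives $\int_{\mathbb{R}^2}(e^{\alpha u^2}-1)\,dx<\infty$, completing the argument. The only delicate points are the density/truncation choice in this last step and the sharp-constant input from Moser's ball inequality in Step~3; everything else is routine.
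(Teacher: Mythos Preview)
The paper does not prove this lemma at all; it is quoted as a known preliminary result with a citation to Cao, so there is no ``paper's own proof'' to compare against.

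Your argument is correct and is essentially the standard route (indeed, close to Cao's original proof): Schwarz symmetrization, the split $\mathbb{R}^2=B_1\cup(\mathbb{R}^2\setminus B_1)$, a pointwise radial decay bound on the exterior to reduce $e^{\alpha u^2}-1$ to a multiple of $u^2$ there, and on $B_1$ the subtraction $v=u-u(1)\in H_0^1(B_1)$ together with the elementary inequality $u^2\le(1+\theta)v^2+(1+\tfrac1\theta)u(1)^2$ so that the $(1+\theta)$-loss is absorbed into the subcritical gap $4\pi-\alpha$ before invoking Moser's inequality. Two minor remarks: for radial nonincreasing $u$ the exterior bound follows already from $\pi r^2 u(r)^2\le\|u\|_2^2\le M^2$, so the Strauss lemma (and hence any dependence on $\|\nabla u\|_2$ in $\Lambda$) is not needed; and your deduction of the qualitative finiteness for arbitrary $\alpha>0$ via density, H\"older on $\{|u|>1\}$, and re-entry into the uniform bound is correct but heavier than necessary --- one can instead reuse the same symmetrization/splitting and on $B_1$ appeal only to Trudinger's embedding $H_0^1(B_1)\hookrightarrow L_{\Phi}$ (no sharp constant required), writing $e^{\alpha u^2}\le e^{2\alpha u(1)^2}e^{2\alpha v^2}$. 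Either way the conclusion stands.
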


\begin{Lemma}[The Sobolev {Embedding} Theorem,{
\cite[Lemma 1.2]{Lcai2024Norm}} and \cite{Baldelli2023Born-Infeld}]\label{LemmaEmbed}
	The space $X$ is embedded continuously into $L^m\left(\mathbb{R}^N\right)$ for $m \in\left[{2}, q^{\prime}\right]$ and compactly into $L_{\text {loc }}^m\left(\mathbb{R}^N\right)$ for $m \in\left[1, q^{\prime}\right)$, $q^{\prime}:=\max\{2^*,q^*\}$.
 Denote $X_{\mathrm{rad}}:=\{u \in X : u \text{ is radially symmetric}\}$, then the space $X_{\mathrm{rad}}$ is embedded compactly into $L^m\left(\mathbb{R}^N\right)$ for $m \in \left(2, q^{\prime}\right)$.
\end{Lemma}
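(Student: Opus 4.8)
The plan is to dispatch the three assertions separately, each one reducing to classical Sobolev facts for $H^1(\mathbb R^N)$ and $D^{1,q}(\mathbb R^N)$, together with interpolation and a radial decay estimate. For the global continuous embedding $X\hookrightarrow L^m(\mathbb R^N)$ with $m\in[2,q']$, I would note that every $u\in X$ belongs to $L^2(\mathbb R^N)\cap L^{2^*}(\mathbb R^N)$ since $u\in H^1$ (when $N=2$, to $L^2\cap L^s$ for every finite $s\ge2$), and to $L^{q^*}(\mathbb R^N)$ since $u\in D^{1,q}$. Because the standing hypothesis $q>2N/(N+2)$ forces $q^*>2$, I would split into the cases $q\le2$ (where $q'=2^*$) and $q>2$ (where $q'=q^*$); in either case $u\in L^2\cap L^{q'}$ and the interpolation inequality $\|u\|_m\le\|u\|_2^{1-\theta}\|u\|_{q'}^{\theta}$ covers the full range $m\in[2,q']$ with constants depending only on $N$ and $q$, hence $\|u\|_m\lesssim\|u\|_X$.

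For the local compact embedding $X\hookrightarrow\hookrightarrow L^m_{\mathrm{loc}}(\mathbb R^N)$ with $m\in[1,q')$, I would fix a bounded domain $\Omega$ and check that $u\mapsto u|_\Omega$ maps $X$ continuously into $W^{1,r}(\Omega)$, with $r=q$ when $q>2$ (then $\nabla u\in L^q$ and $u\in L^{q^*}(\mathbb R^N)\hookrightarrow L^q(\Omega)$ as $\Omega$ is bounded) and $r=2$ when $q\le2$ (then $u|_\Omega\in H^1(\Omega)$ directly). Since $r^*=q'$, the Rellich--Kondrachov theorem gives $W^{1,r}(\Omega)\hookrightarrow\hookrightarrow L^m(\Omega)$ for every $m\in[1,q')$, and the claim follows by letting $\Omega$ exhaust $\mathbb R^N$.

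For the radial global compact embedding $X_{\mathrm{rad}}\hookrightarrow\hookrightarrow L^m(\mathbb R^N)$ with $m\in(2,q')$, I would take a bounded sequence $(u_n)\subset X_{\mathrm{rad}}$ with $u_n\rightharpoonup u$; the previous step gives $u_n\to u$ in $L^m(B_R)$ for every $R>0$, so only tightness remains. For this I would invoke the Strauss radial lemma: for radial $v\in H^1(\mathbb R^N)$, $N\ge2$, one has $|v(x)|\le C_N|x|^{-(N-1)/2}\|v\|_{H^1}$ for $|x|\ge1$. Since $m>2$, this yields
\[
\int_{|x|>R}|u_n|^m\,dx\le\Big(\sup_{|x|>R}|u_n(x)|\Big)^{m-2}\int_{\mathbb R^N}|u_n|^2\,dx\le C\,R^{-(N-1)(m-2)/2}\longrightarrow0
\]
as $R\to\infty$, uniformly in $n$, which together with the local convergence forces $u_n\to u$ in $L^m(\mathbb R^N)$.

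The genuinely delicate point, as opposed to routine bookkeeping, is this last step: the $H^1$-radial decay estimate must come with a constant depending only on $\|v\|_{H^1}$ so that it applies uniformly along the bounded sequence, and the endpoint $m=q'$ must be given up, since the critical exponent destroys compactness even inside the radial class, which is precisely why the interval is left open there. A secondary nuisance is that the dichotomy $q<2$ versus $q>2$ changes both the relevant Sobolev exponent and which of $H^1$ or $D^{1,q}$ is the controlling space, and this split has to be threaded consistently through all three parts.
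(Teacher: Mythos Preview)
The paper does not supply its own proof of this lemma; it simply cites \cite[Lemma~1.2]{Lcai2024Norm} and \cite{Baldelli2023Born-Infeld} and moves on. There is therefore no in-text argument to compare against.

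Your proposal is correct and follows the standard route one would expect behind those citations: interpolation between $L^2$ and $L^{q'}$ for the continuous embedding, Rellich--Kondrachov applied to $W^{1,\max\{2,q\}}(\Omega)$ for local compactness, and Strauss' radial decay for global compactness in the radial subspace. One remark worth adding in a polished write-up: in the radial step your tail bound
\[
\int_{|x|>R}|u_n|^m\,dx\le\Big(\sup_{|x|>R}|u_n|\Big)^{m-2}\|u_n\|_2^2\le C\,R^{-(N-1)(m-2)/2}
\]
uses only the $H^1$ norm, yet it still covers the whole range $m\in(2,q')$ even when $q>2$ and $q'=q^*>2^*$, simply because the exponent $(N-1)(m-2)/2$ is positive for every $m>2$. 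The $D^{1,q}$ information enters only through the local compactness step, which is what allows $m$ to exceed $2^*$ inside $B_R$. This is fine, but since a reader might at first wonder how an $H^1$-based decay estimate reaches past $2^*$, a sentence flagging that the two ingredients split the work this way would help.
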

%{(The first part of Lemma \ref{LemmaEmbed} can be found in \cite[Lemma 1.2]{Lcai2024Norm}, and for the part concerning symmetry, please refer to the Preliminaries of \cite{Baldelli2023Born-Infeld}.)}
	
For the next lemma, we can take a similar argument as that of the classical {Concentration-Compactness principle}. See, for instance,\cite[Lemma 1.21]{Willem1996Minimax}.

 \begin{Lemma}\label{LemmaLions}
	Let $r>0$. {If $(u_n)_n$} is a bounded sequence in $X$ which satisfies
		$$
		\sup _{x \in \mathbb{R}^N} \int_{B_r(x)}\left|u_n\right|^2 d x \rightarrow 0, \quad \text { as } n \rightarrow \infty,
		$$
		then,
		$$
		\left\|u_n\right\|_m \rightarrow 0 \quad \text { as } n \rightarrow \infty
		$$
		holds for any $m \in\left(2,q^{\prime}\right)$, where ${q^{\prime}}=\max \left\{2^*, q^*\right\}$.
	\end{Lemma}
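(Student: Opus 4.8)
The plan is to mimic the classical vanishing lemma of Lions, adapting it to the space $X=H^1(\mathbb R^N)\cap D^{1,q}(\mathbb R^N)$. First I would recall that, by Lemma \ref{LemmaEmbed}, $(u_n)_n$ bounded in $X$ implies $(u_n)_n$ is bounded in $L^m(\mathbb R^N)$ for all $m\in[2,q^{\prime}]$; in particular we have a uniform bound $\sup_n\|u_n\|_{q^{\prime}}<\infty$ (and also $\sup_n\|\nabla u_n\|_2<\infty$). Fix $m\in(2,q^{\prime})$. The key local estimate is the following interpolation--Sobolev inequality on a ball $B_r(x)$: for a suitable exponent $t\in(m,q^{\prime}]$ one has, by Hölder,
\begin{equation*}
\|u\|_{L^m(B_r(x))}\le \|u\|_{L^2(B_r(x))}^{1-\theta}\,\|u\|_{L^{t}(B_r(x))}^{\theta},
\end{equation*}
with $\theta\in(0,1)$ determined by $\frac1m=\frac{1-\theta}{2}+\frac{\theta}{t}$, and then one controls $\|u\|_{L^{t}(B_r(x))}$ by the full $X$-norm localized on $B_r(x)$ via the Sobolev embedding $H^1(B_r)\cap D^{1,q}(B_r)\hookrightarrow L^{t}(B_r)$ (valid since $t\le q^{\prime}$), obtaining a constant depending only on $r$, $N$, $q$. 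Raising to the $m$-th power and summing over a locally finite covering of $\mathbb R^N$ by balls $B_r(x_i)$ with bounded overlap, I would arrive at
\begin{equation*}
\int_{\mathbb R^N}|u_n|^m\,dx\ \lesssim\ \Big(\sup_{x\in\mathbb R^N}\int_{B_r(x)}|u_n|^2\,dx\Big)^{(1-\theta)\frac m2}\,\Big(\|\nabla u_n\|_2^2+\|u_n\|_2^2+\|\nabla u_n\|_q^q\Big),
\end{equation*}
which is the decisive estimate.

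Given this, the conclusion is immediate: the second factor on the right is uniformly bounded because $(u_n)_n$ is bounded in $X$, while the first factor tends to $0$ by hypothesis (note $(1-\theta)\frac m2>0$). Hence $\|u_n\|_m\to0$, as claimed. I would be slightly careful to choose the intermediate exponent $t$ so that both $\theta\in(0,1)$ and $t\le q^{\prime}$ can be satisfied simultaneously for every $m\in(2,q^{\prime})$; taking $t$ slightly below $q^{\prime}$ (or $t=q^{\prime}$ itself, using that $X\hookrightarrow L^{q^{\prime}}$ by Lemma \ref{LemmaEmbed}) and any $m$ strictly between $2$ and $t$ makes $\theta=\frac{1/2-1/m}{1/2-1/t}\in(0,1)$, so the interpolation is legitimate. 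For the endpoint argument one may alternatively first prove the claim for $m$ in a subinterval $(2,t_0)$ and then bootstrap, but this is not needed since the estimate above already covers all admissible $m$ at once.

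The main obstacle — and the only point where the $(2,q)$ structure matters beyond the usual $H^1$ setting — is establishing the localized Sobolev inequality on $B_r(x)$ with a constant uniform in $x$. On a bounded domain $B_r$ the embedding $W^{1,2}(B_r)\hookrightarrow L^{t}(B_r)$ holds for $t\le 2^*$, while $W^{1,q}(B_r)\hookrightarrow L^{t}(B_r)$ holds for $t\le q^*$; since $q^{\prime}=\max\{2^*,q^*\}$, for $t$ near $q^{\prime}$ one must use whichever of the two Sobolev embeddings is the stronger one, so the local bound really does need both components of the $X$-norm, $\|\nabla u\|_{L^2(B_r)}$ and $\|\nabla u\|_{L^q(B_r)}$, which is why the factor $\|\nabla u_n\|_q^q$ appears in the estimate. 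Translation invariance of these embedding constants on balls of fixed radius $r$ gives the uniformity in $x$, and a standard covering lemma (e.g. $\mathbb R^N$ covered by balls of radius $r$ centered at $r\mathbb Z^N$ up to a fixed multiplicity) closes the argument exactly as in \cite[Lemma 1.21]{Willem1996Minimax}.
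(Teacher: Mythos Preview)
Your approach is exactly what the paper intends: the authors do not supply a proof but simply state that one argues as in the classical Lions vanishing lemma, referring to \cite[Lemma~1.21]{Willem1996Minimax}. The only point to tidy up is that the covering summation closes only when $\theta m$ matches the exponent of whichever local Sobolev norm you use ($\theta m=2$ for $H^1(B_r)\hookrightarrow L^t$, or $\theta m=q$ for $W^{1,q}(B_r)\hookrightarrow L^t$), so the two gradient terms cannot appear simultaneously in your displayed estimate; the cleanest fix is to run the $H^1$ argument for a single $m_0\in(2,2^*)$ and then interpolate against the uniform $L^{q'}$ bound from Lemma~\ref{LemmaEmbed} to reach all $m\in(2,q')$.
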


	\begin{Lemma}[Lemma 2.7, \cite{Li1994obstacle}]\label{LemmaConverae}
		Assume $s>1$. Let $\Omega$ be an open set in $\mathbb{R}^N, \alpha, \beta$ positive numbers and $a(x, \xi)$ in $C\left(\Omega \times \mathbb{R}^N, \mathbb{R}^N\right)$ such that\\
		(1) $\alpha|\xi|^s \leq a(x, \xi) \xi$ for all $(x, \xi) \in \Omega \times \mathbb{R}^N$,\\
		(2) $|a(x,\left.\xi)|\leq \beta| \xi|^{s-1}\right.$ for all $(x, \xi) \in \Omega \times \mathbb{R}^N$,\\
		(3) $(a(x, \xi)-a(x, \eta))(\xi- \eta)>0$ for all $(x, \xi) \in \Omega \times \mathbb{R}^N$ with $\xi \neq \eta$,\\
		(4) $a(x, \gamma \xi)=\gamma|\gamma|^{p-2} a(x, \xi)$ for all $(x, \xi) \in \Omega \times \mathbb{R}^N$ and $\gamma \in \mathbb{R} \backslash\{0\}$.\\
		Consider  {$(u_n)_n$},
$u \in W^{1, s}(\Omega)$, then $\nabla u_n \rightarrow \nabla u$ in $L^s(\Omega)$ if and only if
		$$
		\lim _{n \rightarrow \infty} \int_{\Omega}\Big(a\left(x, \nabla u_n(x)\right)-a(x, \nabla u(x))\Big)\left(\nabla u_n(x)-\nabla u(x)\right) d x=0 .
		$$
	\end{Lemma}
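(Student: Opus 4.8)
The statement is the familiar monotonicity characterization of strong gradient convergence for Leray--Lions type operators, quoted here from \cite{Li1994obstacle}; I sketch a self-contained argument. Throughout write $\eta=\nabla u(x)$ and
$$
e_n(x):=\bigl(a(x,\nabla u_n)-a(x,\nabla u)\bigr)\cdot\bigl(\nabla u_n-\nabla u\bigr),
$$
so that $e_n\ge 0$ a.e.\ by the strict monotonicity (3), and the hypothesis of the nontrivial direction is exactly $\int_\Omega e_n\,dx\to 0$. The plan is to treat the two implications separately; note in advance that the homogeneity (4) will play no role, only the ellipticity (1), the growth (2) and the monotonicity (3) being used. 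The ``only if'' direction is immediate: if $\nabla u_n\to\nabla u$ in $L^s(\Omega)$, then (2) gives $|a(x,\nabla u_n)|^{s'}\le\beta^{s'}|\nabla u_n|^s$ with $s'=s/(s-1)$, so $(a(x,\nabla u_n))_n$ stays bounded in $L^{s'}(\Omega)$, and Hölder's inequality yields $\int_\Omega|e_n|\le\|a(x,\nabla u_n)-a(x,\nabla u)\|_{s'}\,\|\nabla u_n-\nabla u\|_s\to 0$.

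For the ``if'' direction the engine is a pointwise lower bound extracted from (1)--(2): expanding $e_n$ into its four scalar products, bounding the first from below by $\alpha|\nabla u_n|^s$, the mixed terms from above by $\beta|\nabla u_n|^{s-1}|\eta|$ and $\beta|\eta|^{s-1}|\nabla u_n|$, and absorbing the latter two by Young's inequality, I obtain for a.e.\ $x$
$$
\tfrac{\alpha}{2}\,|\nabla u_n(x)|^s\le e_n(x)+C\,|\nabla u(x)|^s ,
$$
with $C=C(\alpha,\beta,s)$. Integrating shows at once that $(\nabla u_n)_n$ is bounded in $L^s(\Omega)$, so no boundedness need be assumed. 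Since $e_n\ge0$ and $\int_\Omega e_n\to0$, we have $e_n\to0$ in $L^1(\Omega)$, hence $e_n(x)\to0$ a.e.\ along a subsequence. At such a point I would argue by a dichotomy: if $(\nabla u_n(x))$ fails to converge to $\eta$, then either it has a bounded subsequence converging to some $\zeta\ne\eta$, whence continuity of $a$ and (3) force $e_n(x)\to(a(x,\zeta)-a(x,\eta))\cdot(\zeta-\eta)>0$; or it has an unbounded subsequence, whence the leading term $\alpha|\nabla u_n|^s$ in the bound dominates the lower-order contributions from (2) and forces $e_n(x)\to+\infty$. Both contradict $e_n(x)\to0$, so $\nabla u_n\to\nabla u$ a.e.

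It remains to upgrade this to $L^s$ convergence, and this is where I expect the main obstacle, since $\Omega$ may have infinite measure: a.e.\ convergence together with mere $L^s$-boundedness delivers only weak convergence, and one must rule out concentration on small sets and loss of mass at infinity. The quantitative bound $\tfrac{\alpha}{2}|\nabla u_n|^s\le e_n+C|\nabla u|^s$ is precisely what supplies this control. Indeed $(e_n)$, being $L^1$-convergent, is equi-integrable and tight, and $|\nabla u|^s\in L^1(\Omega)$ is as well, so the bound transfers both properties to $(|\nabla u_n|^s)$ and hence to $(|\nabla u_n-\nabla u|^s)$; with $\nabla u_n\to\nabla u$ a.e., Vitali's convergence theorem then gives $\|\nabla u_n-\nabla u\|_s\to0$ along the subsequence. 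Finally a routine extraction argument—every subsequence still satisfies $\int_\Omega e_n\to0$, hence admits a further subsequence converging to the same limit $\nabla u$ in $L^s$—promotes this to convergence of the full sequence, completing the proof.
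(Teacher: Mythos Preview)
The paper does not prove this lemma; it is quoted verbatim as Lemma~2.7 of \cite{Li1994obstacle} and invoked as a black box in the proof of Lemma~\ref{Lem3-ae}. There is therefore no ``paper's own proof'' to compare against.

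Your self-contained argument is correct. The forward implication is handled cleanly via the growth bound~(2) and H\"older. For the reverse implication, the pointwise estimate $\tfrac{\alpha}{2}|\nabla u_n|^s\le e_n+C|\nabla u|^s$ derived from (1), (2) and Young's inequality is the right tool: it simultaneously yields $L^s$-boundedness of $(\nabla u_n)$, forces a.e.\ convergence of the gradients via the dichotomy you describe, and supplies the uniform integrability and tightness needed for Vitali on a possibly unbounded $\Omega$. The final subsequence-of-subsequence step is standard. Your observation that the homogeneity hypothesis~(4) is never used is accurate and worth recording; the result holds under (1)--(3) alone.
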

	
To conclude this section, we recall the following elementary inequality.
	This inequality will be used in Lemma \ref{LemmaStrongconver} to show that if $(u_n)_n$ is a minimizing sequence of $J$ and
	$u_n \rightharpoonup {u}$ in $X$, then
	$\nabla u_n \rightarrow \nabla {u}$ for a.e. $x\in \mathbb{R}^N$.
{	\begin{Lemma}[Remark 1, \cite{Baldelli20222qF}]\label{Remarkpost}
	There exists a constant $C(s)>0$ such that
	for all $x, y \in \mathbb{R}^N$ with $|x|+|y| \neq 0$,
	$$
		\left\langle|x|^{s-2} x-|y|^{s-2} y, x-y\right\rangle \geq C(s) \begin{cases}\dfrac{|x-y|^2}{(|x|+|y|)^{{(2-s)}}}, &1 \leq s<2, \\ |x-y|^s, & s \geq 2 .\end{cases}
		$$
\end{Lemma}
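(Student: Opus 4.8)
The final statement to prove is Lemma \ref{Remarkpost}, the elementary inequality
$$\left\langle|x|^{s-2} x-|y|^{s-2} y, x-y\right\rangle \geq C(s) \begin{cases}\dfrac{|x-y|^2}{(|x|+|y|)^{2-s}}, &1 \leq s<2, \\ |x-y|^s, & s \geq 2.\end{cases}$$

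\textbf{Proof proposal.} The plan is to reduce the two cases to classical vector inequalities for the map $x \mapsto |x|^{s-2}x$ on $\mathbb{R}^N$. For the case $s \geq 2$, I would first treat the scalar identity: writing $\Phi(t) = |t|^{s-2}t$ as the gradient of $t \mapsto \frac{1}{s}|t|^s$, one has by the mean value theorem along the segment $[y,x]$ that $\langle |x|^{s-2}x - |y|^{s-2}y, x-y\rangle = \int_0^1 \frac{d}{dt}\langle |y+t(x-y)|^{s-2}(y+t(x-y)), x-y\rangle\, dt$, and the integrand equals $(s-2)|z_t|^{s-4}\langle z_t, x-y\rangle^2 + |z_t|^{s-2}|x-y|^2 \geq |z_t|^{s-2}|x-y|^2$ where $z_t = y+t(x-y)$ (the first term is nonnegative since $s \geq 2$). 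One then needs a lower bound of the form $\int_0^1 |y+t(x-y)|^{s-2}\, dt \geq c(s)|x-y|^{s-2}$; this follows because $|y+t(x-y)|$ is at least $\tfrac12|x-y|$ on a set of $t$-measure bounded below (e.g. by homogeneity, reduce to $|x-y|=1$ and observe the linear function $t\mapsto |y+t(x-y)|$ cannot be small on too large a portion of $[0,1]$). Alternatively, and more cleanly, I would simply cite the well-known inequality $\langle |x|^{s-2}x-|y|^{s-2}y, x-y\rangle \geq 2^{2-s}|x-y|^s$ for $s \geq 2$, which is standard in the $p$-Laplacian literature (e.g. Simon's inequalities, or Lindqvist's notes).

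For $1 \leq s < 2$, the same scalar/integral representation gives $\langle |x|^{s-2}x-|y|^{s-2}y, x-y\rangle = \int_0^1\big[(s-2)|z_t|^{s-4}\langle z_t,x-y\rangle^2 + |z_t|^{s-2}|x-y|^2\big]\,dt$, and now Cauchy–Schwarz gives $|\langle z_t, x-y\rangle| \leq |z_t||x-y|$, so the integrand is at least $(s-2)|z_t|^{s-2}|x-y|^2 + |z_t|^{s-2}|x-y|^2 = (s-1)|z_t|^{s-2}|x-y|^2 \geq 0$. To extract the claimed bound one needs $\int_0^1 |z_t|^{s-2}\,dt \geq c(s)(|x|+|y|)^{s-2}$; since $s-2<0$, this is a bound from above on $|z_t|$, namely $|z_t| = |y+t(x-y)| \leq |x|+|y|$, which is immediate from the triangle inequality. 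Hence the integrand is at least $(s-1)(|x|+|y|)^{s-2}|x-y|^2$ for every $t$, and integrating over $[0,1]$ yields the result with $C(s) = s-1$. (One must handle the case $s=1$, where $s-1=0$, separately or note that for $s=1$ the left side is $\langle x/|x| - y/|y|, x-y\rangle$ which by a direct computation equals $|x|+|y| - \frac{(|x|+|y|)\langle x,y\rangle}{|x||y|} \geq 0$; but since the paper only uses $1 < s$ effectively, or since the constant can be taken as a limiting value, this is a minor point I would address in a line.)

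The main obstacle, such as it is, is purely bookkeeping: making sure the mean-value/integral representation is valid up to and including $y=0$ or $x=0$ (where $|z_t|^{s-4}$ may blow up), which is why the hypothesis $|x|+|y|\neq 0$ is imposed and why one should either restrict to $x,y \neq 0$ first and then pass to the limit, or argue directly that the integrand, after cancellation, is continuous. Given that this is stated as an ``elementary inequality'' with an explicit citation to \cite{Baldelli20222qF}, the cleanest exposition is to invoke that reference for $s \geq 2$ and give the two-line integral argument above for $1 \leq s < 2$; I would not reproduce the full proof of the classical case.
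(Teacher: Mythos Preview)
The paper does not prove this lemma at all: it is stated with the citation ``[Remark~1, \cite{Baldelli20222qF}]'' and no argument is given. Your proposal therefore supplies strictly more than the paper does, and the integral-representation argument you sketch is the standard one and is correct for $1<s<2$ (yielding $C(s)=s-1$) and for $s\ge 2$ (your measure argument that $|z_t|\ge \tfrac14|x-y|$ on a set of $t$-measure at least $\tfrac12$ is fine). The only genuine wrinkle is the endpoint $s=1$: your computation gives $C(1)=0$, and in fact no positive constant works there (take $x,y$ parallel with $|x|\neq|y|$: the left-hand side vanishes while $|x-y|^2/(|x|+|y|)>0$). Since the paper only applies the lemma with $s=2$ and $s=q>\tfrac{2N}{N+2}>1$, this is harmless in context, but you should either state the range as $1<s<2$ or note explicitly that $C(1)=0$.
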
}

	\section{The perturbed problem}\label{Sec3}

Define $m_{\varepsilon}(c)=\inf _{\mathcal{D}(c)} J_{\varepsilon}(u)$, where the functional $J_{\varepsilon}: X \rightarrow \mathbb{R}$ is given in equation  \eqref{Functionalperturbed}.  The main purpose of this section is to prove the following result.
	
	\begin{theorem}\label{theoremPerturbed}
		Assume that $g$ satisfies $(g_0)-\left(g_4\right)$. Then, for every $\beta>0$, there exists $\bar{c}\geq0$ such that for every $c>\bar{c}$ and $\varepsilon>0$, there exist $\lambda_{\varepsilon}>0$ and $u_{\varepsilon} \in X$ such that $J_{\varepsilon}\left(u_{\varepsilon}\right)=m_{\varepsilon}(c)<-\beta$ and
		\begin{equation}\label{Equationperturbed}
			\left\{\begin{array}{l}
				-\Delta u_{\varepsilon}-\Delta_q u_{\varepsilon}+\lambda_{\varepsilon}u_{\varepsilon} =g_{+}\left(u_{\varepsilon}\right)-\varphi_{\varepsilon}\left(u_{\varepsilon}\right) g_{-}\left(u_{\varepsilon}\right),\quad x \in \mathbb{R}^N, \\				\int_{\mathbb{R}^N}\left|u_{\varepsilon}\right|^2 d x=c^2 .
\end{array}\right.
\tag{$\mathscr P_\varepsilon$}
\end{equation}
		Moreover, every such $u_{\varepsilon}$ has constant sign and, up to a translation, is radial and radially monotonic.
	\end{theorem}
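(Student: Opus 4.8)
\emph{Outline of the argument.} The plan is to obtain $u_\varepsilon$ as a minimizer of the $C^1$ functional $J_\varepsilon$ over the closed ball $\mathcal D(c)$, with $m_\varepsilon(c)=\inf_{\mathcal D(c)}J_\varepsilon$. I would first check that $J_\varepsilon$ is coercive and bounded below on $\mathcal D(c)$ for every $c>0$ and $\varepsilon\in(0,1)$: since $0\le\varphi_\varepsilon\le1$ and $g_-$ has the sign of the identity, $G_-^\varepsilon\ge0$, so only $-\int_{\mathbb R^N}G_+(u)$ is dangerous; by $(g_1)$, $(g_3)$ and continuity of $G_+$ one has $G_+(s)\le\delta|s|^{\bar q}+C_\delta|s|^2$ for each $\delta>0$, and on $\mathcal D(c)$ the Gagliardo--Nirenberg inequalities (Lemma~\ref{LemmaGN} if $\max\{2,q\}=2$, Lemma~\ref{Lemma-GN inequality2} if $\max\{2,q\}=q$, the $H^1(\mathbb R^2)$ one if $N=2$), applied at the $L^2$- or $L^q$-critical exponent attached to $\bar q$, give $\|u\|_{\bar q}^{\bar q}\lesssim\bigl(\|\nabla u\|_2^2+\|\nabla u\|_q^q\bigr)c^{\theta}$ for some $\theta>0$; taking $\delta$ small (depending only on $c$) absorbs it, so $J_\varepsilon(u)\ge\tfrac14\|\nabla u\|_2^2+\tfrac1{2q}\|\nabla u\|_q^q-C(c)$ on $\mathcal D(c)$, and in particular $m_\varepsilon(c)\in\mathbb R$ and minimizing sequences are bounded in $X$.

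To get $m_\varepsilon(c)<-\beta$ for $c$ large, note that $0\le G_-^\varepsilon\le G_-$ yields $\Phi_\varepsilon:=G_+-G_-^\varepsilon\ge G_+-G_-=G$, hence $J_\varepsilon(u)\le J(u)$ for bounded $u\in X$. Using $(g_4)$ and continuity I would pick $v_0\in X$ bounded, equal to $\xi_0$ on a large ball and smoothly cut off outside, with $\int_{\mathbb R^N}G(v_0)>0$; then $v_0^t:=v_0(\cdot/t)\in\mathcal S(c)$ for $t=t(c)=(c/\|v_0\|_2)^{2/N}\to\infty$ as $c\to\infty$, and
\[
m_\varepsilon(c)\le J_\varepsilon(v_0^t)\le J(v_0^t)=\frac{t^{N-2}}{2}\|\nabla v_0\|_2^2+\frac{t^{N-q}}{q}\|\nabla v_0\|_q^q-t^N\!\int_{\mathbb R^N}G(v_0)\ \longrightarrow\ -\infty
\]
since $N>\max\{N-2,N-q\}$; this produces $\bar c\ge0$, the same for all $\varepsilon\in(0,1)$, with $m_\varepsilon(c)<-\beta$ whenever $c>\bar c$. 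Fix such a $c$.

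Next I would show the infimum is attained. For a minimizing sequence $(u_n)\subset\mathcal D(c)$, bounded in $X$, vanishing is excluded: if $\sup_x\int_{B_r(x)}|u_n|^2\to0$ then Lemma~\ref{LemmaLions} gives $\|u_n\|_m\to0$ for $m\in(2,q')$, hence $\int G_+(u_n)\to0$ and $J_\varepsilon(u_n)\ge\int G_-^\varepsilon(u_n)-o(1)\ge-o(1)$, against $m_\varepsilon(c)<-\beta<0$. So, after a translation, $u_n\rightharpoonup u_\varepsilon\ne0$ in $X$ and a.e.; the quasilinear monotonicity estimates (Lemmas~\ref{Remarkpost},~\ref{LemmaConverae}) upgrade this to $\nabla u_n\to\nabla u_\varepsilon$ a.e.\ (Lemma~\ref{LemmaStrongconver}), which makes a Br\'ezis--Lieb decomposition of every term of $J_\varepsilon$ legitimate; combined with the strict subadditivity of $c\mapsto m_\varepsilon(c)$ (a consequence of $m_\varepsilon(c)<0$ and the elementary bound $J_\varepsilon(u(\cdot/\sigma))\le\sigma^NJ_\varepsilon(u)$ for $\sigma\ge1$ when $J_\varepsilon(u)<0$), this rules out dichotomy and gives $u_n\to u_\varepsilon$ in $X$, so $u_\varepsilon\in\mathcal S(c)$ and $J_\varepsilon(u_\varepsilon)=m_\varepsilon(c)<-\beta$.

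Finally, since $u_\varepsilon$ minimizes the $C^1$ functional $J_\varepsilon$ on $\mathcal D(c)$ with $\|u_\varepsilon\|_2=c$, the Lagrange rule gives $\lambda_\varepsilon\ge0$ with $J_\varepsilon'(u_\varepsilon)+\lambda_\varepsilon u_\varepsilon=0$, i.e.\ $u_\varepsilon$ solves \eqref{Equationperturbed}; if $\lambda_\varepsilon=0$ then $u_\varepsilon$ is a free critical point and the Pohozaev identity $\tfrac{N-2}{2}\|\nabla u_\varepsilon\|_2^2+\tfrac{N-q}{q}\|\nabla u_\varepsilon\|_q^q=N\int_{\mathbb R^N}\Phi_\varepsilon(u_\varepsilon)$ together with $J_\varepsilon(u_\varepsilon)<0$ would force $\tfrac1N\bigl(\|\nabla u_\varepsilon\|_2^2+\|\nabla u_\varepsilon\|_q^q\bigr)<0$, which is impossible, so $\lambda_\varepsilon>0$. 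Within its sign class the Schwarz symmetrization preserves $\|\cdot\|_2$ and $\int\Phi_\varepsilon(\cdot)$ (equimeasurability) and does not increase either Dirichlet energy (P\'olya--Szeg\H{o}), so the symmetrized function is still a minimizer and the equality case of P\'olya--Szeg\H{o} gives that $u_\varepsilon$ is, up to a translation, radial and radially monotone; the constant sign follows from the strong maximum principle for $-\Delta-\Delta_q$, exploiting that near $s=0$ the perturbed reaction $g_+(s)-\varphi_\varepsilon(s)g_-(s)$ points towards the origin (the role of the cut-off $\varphi_\varepsilon$), as in \cite{Mederski2023sublinear}. The hard part is the compactness step: the lack of Hilbert structure means the $q$-Laplacian term blocks a direct Br\'ezis--Lieb argument, so the a.e.\ convergence of the gradients must be secured first via the quasilinear monotonicity inequalities; and when $N=2$ the critical exponential growth permitted by $(g_2)$ makes the continuity of $u\mapsto\int G_+(u)$ along the sequence delicate, requiring the Trudinger--Moser inequality (Lemma~\ref{LemmaTrudinger-Moser}).
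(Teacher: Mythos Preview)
Your outline follows the paper's strategy closely: coercivity via Gagliardo--Nirenberg, the dilated test function for $m_\varepsilon(c)<-\beta$, concentration--compactness plus subadditivity for compactness, Pohozaev for $\lambda_\varepsilon>0$, and symmetrization for the qualitative properties. Two points, however, are not as automatic as you present them.

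First, the step ``$u_n\to u_\varepsilon$ in $X$, so $u_\varepsilon\in\mathcal S(c)$'' does not follow: your minimizing sequence lies in $\mathcal D(c)$, not $\mathcal S(c)$, so even strong $X$-convergence only yields $\|u_\varepsilon\|_2=\lim_n\|u_n\|_2\le c$. The paper closes this differently: it identifies $u_\varepsilon$ as a minimizer of $J_\varepsilon$ over $\mathcal D(c)$ via Fatou and weak lower semicontinuity (strong $X$-convergence is not needed for this), then uses Pohozaev to get $\lambda_\varepsilon>0$, which via Lemma~\ref{LemmaPSseq} forces $\lim_n\|u_n\|_2=c$, and finally runs a second Br\'ezis--Lieb/subadditivity argument to conclude $\|u_\varepsilon\|_2=c$. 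Your own scaling inequality can close the gap more directly: once the concentration argument yields $J_\varepsilon(u_\varepsilon)=m_\varepsilon(\|u_\varepsilon\|_2)=m_\varepsilon(c)$, if $\|u_\varepsilon\|_2<c$ then dilating $u_\varepsilon$ onto $\mathcal S(c)$ produces $w$ with $J_\varepsilon(w)<\sigma^N m_\varepsilon(c)<m_\varepsilon(c)$ (strict since $\nabla u_\varepsilon\neq0$ and $m_\varepsilon(c)<0$), a contradiction. Either way, this step must be made explicit; as written, your order (first $u_\varepsilon\in\mathcal S(c)$, then $\lambda_\varepsilon>0$) is circular.

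Second, the a.e.\ convergence $\nabla u_n\to\nabla u_\varepsilon$ (needed for the Br\'ezis--Lieb splitting of $\|\nabla\cdot\|_q^q$) is obtained in the paper via the Boccardo--Murat technique (Lemma~\ref{Lem3-ae}), which tests an \emph{approximate Euler--Lagrange equation} against truncations $\tau_k(u_n-u_\varepsilon)$. A generic minimizing sequence does not satisfy such an equation; this is why the paper first passes, via Ekeland's principle (Lemma~\ref{LemmaPSseq}), to a nearby sequence that is Palais--Smale on $\mathcal D(c)$. You invoke the monotonicity lemmas but omit this Ekeland step, without which the monotonicity argument has nothing to act on.
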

	We begin by proving a series of lemmas.
	
	\begin{Lemma}\label{LemmaBoundedbelow}
		Assume that $g$ satisfies $(g_0)-(g_3)$. Then, $\left.J_{\varepsilon}\right|_{\mathcal{D}(c)}$ is bounded from below.
		
	\end{Lemma}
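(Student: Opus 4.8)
The plan is to show that $J_\varepsilon$ is bounded from below on $\mathcal{D}(c)$ by estimating the (possibly negative) term $-\int_{\mathbb R^N}G_+(u)\,dx$ from below using the growth assumptions $(g_0)$--$(g_3)$, while the first three terms of $J_\varepsilon$ are nonnegative (recall $G_-^\varepsilon\geq 0$ since $\varphi_\varepsilon\geq 0$ and $g_-(t)t\geq 0$ near appropriate signs, or more precisely since $G_-^\varepsilon(s)=\int_0^s\varphi_\varepsilon(t)g_-(t)\,dt$ inherits nonnegativity from $G_-\geq 0$). So it suffices to control $\int_{\mathbb R^N}G_+(u)\,dx$ uniformly for $u\in\mathcal{D}(c)$ in terms of $\|\nabla u\|_2$, $\|\nabla u\|_q$ and the fixed mass $c$.

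First I would fix $u\in\mathcal{D}(c)$. From $(g_1)$ there exists $\delta>0$ with $G_+(s)\leq \frac{1}{2}|s|^2$ for $|s|\leq\delta$ (any small coefficient works). From $(g_3)$, given any $\mu>0$ there is $R>0$ with $G_+(s)\leq \mu|s|^{\bar q}$ for $|s|\geq R$. On the intermediate range $\delta\leq|s|\leq R$, continuity of $G_+$ gives a bound $G_+(s)\leq C_{\delta,R}$; combining with the quadratic bound near zero one gets, on $\{|s|\le R\}$, an estimate $G_+(s)\lesssim |s|^2 + |s|^m$ for any fixed $m\in(2,q')$ with $\bar q\le q'$ (this is where $(g_2)$ matters, ensuring $\bar q\leq q'$ so the interpolation exponent is admissible). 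Hence globally
\[
G_+(s)\;\lesssim\; |s|^2 + |s|^{\bar q}\qquad\text{for all }s\in\mathbb R,
\]
with constants depending only on $g$. Integrating, $\int_{\mathbb R^N}G_+(u)\,dx\lesssim \|u\|_2^2 + \|u\|_{\bar q}^{\bar q}\leq c^2 + \|u\|_{\bar q}^{\bar q}$.

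Next I would control $\|u\|_{\bar q}^{\bar q}$ by Gagliardo--Nirenberg. Write $\bar q=(1+\tfrac2N)\max\{2,q\}$. When $\max\{2,q\}=2$ (i.e.\ $q\le 2$), $\bar q=2+\tfrac4N<2^*$ and Lemma~\ref{LemmaGN} gives $\|u\|_{\bar q}^{\bar q}\le C\|\nabla u\|_2^{\bar q\delta_{\bar q}}\|u\|_2^{\bar q(1-\delta_{\bar q})}$, where $\bar q\,\delta_{\bar q}= \frac N2(\bar q-2)=2$, so $\|u\|_{\bar q}^{\bar q}\le C c^{\bar q-2}\|\nabla u\|_2^2$. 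When $\max\{2,q\}=q$ (i.e.\ $q>2$), $\bar q=q+\tfrac{2q}{N}<q^*$ and the $L^q$-Gagliardo--Nirenberg inequality of Lemma~\ref{Lemma-GN inequality2} gives $\|u\|_{\bar q}^{\bar q}\le C\|\nabla u\|_q^{\bar q\,\nu_{\bar q,q}}\|u\|_2^{\bar q(1-\nu_{\bar q,q})}$, and a direct computation yields $\bar q\,\nu_{\bar q,q}=q$, hence $\|u\|_{\bar q}^{\bar q}\le Cc^{\,\bar q-q}\|\nabla u\|_q^q$. In either case, therefore,
\[
\int_{\mathbb R^N}G_+(u)\,dx\;\le\; C_1 c^2 + C_2\big(\|\nabla u\|_2^2 + \|\nabla u\|_q^q\big)
\]
for constants $C_1,C_2$ depending on $c$ and $g$; but crucially $C_2$ can be taken small — indeed arbitrarily small — because the constant $\mu$ in the $(g_3)$-estimate above was free, so one chooses $\mu$ small enough that $C_2<\min\{\tfrac12,\tfrac1q\}$.

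Finally, plugging into \eqref{Functionalperturbed} and discarding the nonnegative term $\int G_-^\varepsilon(u)$,
\[
J_\varepsilon(u)\;\ge\;\Big(\tfrac12-C_2\Big)\|\nabla u\|_2^2 + \Big(\tfrac1q-C_2\Big)\|\nabla u\|_q^q - C_1c^2\;\ge\;-C_1c^2,
\]
which is a lower bound independent of $u\in\mathcal{D}(c)$ (and independent of $\varepsilon$), proving the claim. The main obstacle is the bookkeeping at the critical Gagliardo--Nirenberg exponent: one must verify that $\bar q$ is exactly the threshold making the gradient power equal to $2$ (resp.\ $q$) so that the gradient terms can be absorbed, and one must handle the two regimes $q\lessgtr 2$ separately, invoking the Hilbertian GN inequality in one case and the $L^q$-version in the other; the role of $(g_2)$ is the subtler point, since it is needed only to guarantee $\bar q\le q'$ so that the intermediate-range interpolation $G_+(s)\lesssim|s|^2+|s|^{\bar q}$ is legitimate (the term $|s|^{q'-1}$ growth of $g$ integrates to $|s|^{q'}\gtrsim|s|^{\bar q}$ on bounded sets).
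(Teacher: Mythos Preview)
Your proof is correct and follows essentially the same route as the paper: derive the pointwise bound $G_+(s)\le C_\delta|s|^2+\delta|s|^{\bar q}$ from $(g_1)$ and $(g_3)$, split into the cases $q<2$ and $q>2$, apply the corresponding Gagliardo--Nirenberg inequality (Lemma~\ref{LemmaGN} or Lemma~\ref{Lemma-GN inequality2}), observe that the gradient exponent equals exactly $2$ (resp.\ $q$), and absorb by taking $\delta$ small. One minor correction: your remarks about $(g_2)$ are off --- on the intermediate range $\delta\le|s|\le R$ continuity alone gives $G_+(s)\le M\le M\delta^{-2}|s|^2$, so $(g_2)$ plays no role in this lemma (nor does the inequality $\bar q\le q'$, which is just arithmetic); the paper's proof in fact uses only $(g_0),(g_1),(g_3)$ here.
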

	\begin{proof}
	Observing that
		$$
		\left\{\begin{array}{l}
			\bar{q} \delta_{\bar{q}}=2,\,\, \text { if } \frac{2 N}{N+2}<q<2, \\
			\bar{q} \delta_{\bar{q}}=q\left(1+\delta_q\right),\,\, \text { if } 2<q<N ,
		\end{array}\right.
		$$
		thus
		$$
		\bar{q} \delta_{\bar{q}}=\max \left\{2, q\left(1+\delta_q\right)\right\}, \,\,\text{for}\,\,\frac{2 N}{N+2}<q<2\,\,\text{and}\,\, 2<q<N.
		$$
		From $(g_1)$ and $(g_3)$, for any $\delta>0$, there exists $C_\delta>0$ such that for every $s \in \mathbb{R}$
		\begin{equation}\label{eq-G}
		G_{+}(s) \leq C_\delta|s|^2+\delta|s|^{\bar{q}} .
		\end{equation}
		Consequently, {if} $\frac{2 N}{N+2} < q < 2$, from Lemma \ref{LemmaGN}, for every $u \in \mathcal{D}(c)$, we have
	\begin{equation}\label{Eq-GN1}
		\begin{aligned}
			J_{\varepsilon}(u) & \geq \frac{1}{2}\|\nabla u\|_2^2-\int_{\mathbb{R}^N} G_{+}(u) d x \\
			& \geq \frac{1}{2}\|\nabla u\|_2^2-C_\delta\|u\|_2^2-\delta\|u\|_{\bar{q}}^{\bar{q}} \\
			& \geq \frac{1}{2}\|\nabla u\|_2^2-C_\delta\|u\|_2^2-\delta C_{N, \bar{q}}^{\bar{q}} c^{\bar{q}\left(1-\delta_{\bar{q}}\right)}\|\nabla u\|_2^{\bar{q} \delta_{\bar{q}}} \\
			& \geq\|\nabla u\|_2^2\left(\frac{1}{2}-\delta C_{N, \bar{q}}^{\bar{q}} c^{\bar{q}\left(1-\delta_{\bar{q}}\right)}\|\nabla u\|_2^{\bar{q} \delta_{\bar{q}}-2}\right)-C_\delta c^2 .
		\end{aligned}
	\end{equation}
{By choosing suitable $\delta$ with $\delta < \left(2 C_{N, \bar{q}}^{\bar{q}} c^{\bar{q} (1 - \delta_{\bar{q}})}\right)^{-1}$},  $\left.J_{\varepsilon}\right|_{\mathcal{D}(c)}$ is bounded from below.
Similarly,
if $2 < q <  N$, from Lemma \ref{Lemma-GN inequality2}, for every $u \in \mathcal{D}(c)$, we have
\begin{equation}\label{Eq-GN2}
	\begin{aligned}
		J_{\varepsilon}(u) & \geq \frac{1}{q}\|\nabla u\|_q^q-\int_{\mathbb{R}^N} G_{+}(u) d x \\
		& \geq \frac{1}{q}\|\nabla u\|_q^q-C_\delta\|u\|_2^2-\delta\|u\|_{\bar{q}}^{\bar{q}} \\
		& \geq \frac{1}{q}\|\nabla u\|_q^q-C_\delta\|u\|_2^2-\delta K_{N, \bar{q}}^{\bar{q}} \bar{c}^{\bar{q}\left(1-\nu_{\bar{q}, q}\right)}\|\nabla u\|_q^{\bar{q} \nu_{\bar{q}, q}} \\
		& \geq\|\nabla u\|_q^q\left(\frac{1}{q}-\delta K_{N, \bar{q}}^{\bar{q}} c^{\bar{q}\left(1-\nu_{\bar{q}, q}\right)}\|\nabla u\|_q^{\bar{q} \nu_{\bar{q}, q}-q}\right)-C_\delta c^2
	\end{aligned}
\end{equation}
 The same result can be easily obtained by {choosing suitable $\delta$ with} $\delta < \left(qK_{N, \bar{q}}^{\bar{q}} c^{\bar{q} (1 - \nu_{\bar{q},q})}\right)^{-1}$.
	\end{proof}
	
	\begin{Lemma}\label{Lemmam(c)<0}
		Assume that $g$ satisfies $(g_0)-(g_2)$ and $(g_4)$. Then for every $\beta>0$,  there exists $\bar{c}\geq0$ such that
		$$
		m_{\varepsilon}(c)<-\beta
		$$
		for every $c>\bar{c}$ and every $\varepsilon \in(0,1)$.
	\end{Lemma}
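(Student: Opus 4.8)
The plan is, for a fixed $\beta>0$, to exhibit a single function $u\in\mathcal D(c)$ — not depending on $\varepsilon$ — on which $J_\varepsilon$ takes a value below $-\beta$, provided $c$ exceeds a threshold $\bar c=\bar c(\beta)$. The role of $\varepsilon$ is eliminated at the very beginning by a pointwise comparison, and the test function is built from the sign condition $(g_4)$.

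\textbf{Step 1 (eliminating $\varepsilon$).} Since $G=G_+-G_-$, one has $J(u)=\tfrac12\|\nabla u\|_2^2+\tfrac1q\|\nabla u\|_q^q+\int_{\mathbb R^N}G_-(u)\,dx-\int_{\mathbb R^N}G_+(u)\,dx$, while $J_\varepsilon$ is the same with $G_-^\varepsilon$ in place of $G_-$. Because $0\le\varphi_\varepsilon\le1$ and $g_-(s)\,s\ge0$ for all $s\in\mathbb R$ (immediate from $(g_0)$ and the definition of $G_+$), one checks that $G_-^\varepsilon(s)=\int_0^s\varphi_\varepsilon(t)g_-(t)\,dt\le\int_0^s g_-(t)\,dt=G_-(s)$ for every $s\in\mathbb R$. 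Hence
\[
J_\varepsilon(u)\le \frac12\|\nabla u\|_2^2+\frac1q\|\nabla u\|_q^q-\int_{\mathbb R^N}G(u)\,dx=J(u)
\]
for every $u\in X$ with $G(u)\in L^1(\mathbb R^N)$, and the right-hand side does not involve $\varepsilon$. This $\varepsilon$-free comparison is what will force the threshold $\bar c$ below to be independent of $\varepsilon$.

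\textbf{Step 2 (a test function with very negative energy).} Let $\xi_0\ne0$ be as in $(g_4)$, so $G(\xi_0)>0$; up to replacing $u$ by $-u$ throughout we may assume $\xi_0>0$. For $R\ge1$ define the Lipschitz, compactly supported function
\[
u_R(x)=\begin{cases}\xi_0,& |x|\le R,\\[2pt] \xi_0(R+1-|x|),& R<|x|<R+1,\\[2pt] 0,& |x|\ge R+1,\end{cases}
\]
so that $u_R\in X$, $0\le u_R\le\xi_0$ and $G(u_R)\in L^1(\mathbb R^N)$ by $(g_0)$. Then $\|u_R\|_2^2\le\xi_0^2|B_{R+1}|$, while $|\nabla u_R|=\xi_0$ on the annulus $B_{R+1}\setminus B_R$ and vanishes elsewhere, so $\|\nabla u_R\|_2^2$ and $\|\nabla u_R\|_q^q$ are both of order $|B_{R+1}\setminus B_R|=O(R^{N-1})$. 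Since $G$ is bounded on $[0,\xi_0]$,
\[
\int_{\mathbb R^N}G(u_R)\,dx\ge G(\xi_0)\,|B_R|-\Big(\max_{[0,\xi_0]}|G|\Big)|B_{R+1}\setminus B_R|\ge |B_1|\,G(\xi_0)\,R^N-C_1R^{N-1},
\]
with $C_1$ independent of $R$ and $\varepsilon$. Combining these estimates with Step 1,
\[
J_\varepsilon(u_R)\le J(u_R)\le C_2R^{N-1}-|B_1|\,G(\xi_0)\,R^N
\]
for every $\varepsilon\in(0,1)$ and $R\ge1$, with $C_2$ independent of $R$ and $\varepsilon$; since $G(\xi_0)>0$, the right-hand side tends to $-\infty$ as $R\to\infty$.

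\textbf{Step 3 (choosing $\bar c$).} Given $\beta>0$, fix $R_\beta\ge1$ with $C_2R_\beta^{N-1}-|B_1|\,G(\xi_0)R_\beta^N<-\beta$ and set $\bar c:=\xi_0\,|B_{R_\beta+1}|^{1/2}\ge\|u_{R_\beta}\|_2$. For every $c>\bar c$ we then have $u_{R_\beta}\in\mathcal D(c)$, hence $m_\varepsilon(c)\le J_\varepsilon(u_{R_\beta})<-\beta$ for every $\varepsilon\in(0,1)$, which is the claim. I do not expect a real obstacle: the only delicate point is the uniformity in $\varepsilon$, which is precisely why one must route the estimate through the comparison $J_\varepsilon\le J$ instead of working with $J_\varepsilon$ directly, and the only bookkeeping is that both gradient terms and the boundary-layer contribution of $G$ live on the thin annulus and are genuinely of lower order $O(R^{N-1})$ than the bulk term $|B_1|\,G(\xi_0)\,R^N$.
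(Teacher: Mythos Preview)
Your proof is correct and shares with the paper the two essential ingredients: the plateau test function $u_R$ built from $(g_4)$, and the pointwise comparison $G_-^\varepsilon\le G_-$ (equivalently $G^\varepsilon:=G_+-G_-^\varepsilon\ge G$) to make everything uniform in $\varepsilon$. The difference lies in how the energy is driven to $-\infty$. The paper fixes a single $R$ with $\int G(u_R)\,dx\ge1$, then \emph{rescales} via $u_c(x)=u_R(\|u_R\|_2^{2/N}c^{-2/N}x)\in\mathcal S(c)$ and lets $c\to\infty$; the gradient terms scale like $c^{2(N-2)/N}$ and $c^{2(N-q)/N}$, both of lower order than the potential term $\sim c^2$. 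You instead let $R\to\infty$ directly, exploiting that the gradient and annulus contributions are $O(R^{N-1})$ against the bulk $G(\xi_0)|B_R|\sim R^N$, and then pick $\bar c$ large enough that $u_{R_\beta}\in\mathcal D(c)$. Your route is slightly more elementary (no rescaling computation) and uses only that the minimization is over $\mathcal D(c)$; the paper's rescaling has the minor bonus of producing a test function actually lying on $\mathcal S(c)$, though that is not needed for the lemma as stated. One cosmetic point: the phrase ``up to replacing $u$ by $-u$'' is not quite right since $G$ need not be even; just build $u_R$ with the given $\xi_0$ of either sign and replace $\max_{[0,\xi_0]}|G|$ by $\max_{[\min(0,\xi_0),\max(0,\xi_0)]}|G|$.
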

	\begin{proof}
{Take $R>0$ and} set
		$$
		u_R(x)= \begin{cases}\xi_0  & \text { if }|x| \leq R, \\ \xi_0 (R+1-|x|) & \text { if } R<|x| \leq R+1, \\ 0 & \text { if }|x|>R+1,\end{cases}
		$$
		where $\xi_0$ is {the} constant determined in $(g_4)$. It can be shown through direct calculation that $u_R \in X$.
			Arguing as in \cite[Lemma 2.3]{Shibata2013general}, for $R$ large enough,  $\int_{\mathbb{R}^N} G(u_R) d x \geq 1$.
{Putting  $$G^{\varepsilon}:=G_{+}-G_{-}^{\varepsilon} \geq G,$$ we see} that $\int_{\mathbb{R}^N} G^{\varepsilon}(u_R) d x \geq 1$ for any $\varepsilon \in(0,1)$. Now, define $u_c:=u_R\left(\|u_R\|_2^{2 / N} c^{-2 / N}x\right) \in \mathcal{S}(c)$,  then
		$$
		\begin{aligned}
			J_{\varepsilon}\left(u_c\right)
			& =\frac{1}{2} \int_{\mathbb{R}^N}|\nabla u_c|^2 d x+\frac{1}{q} \int_{\mathbb{R}^N}|\nabla u_c|^q d x-\int_{\mathbb{R}^N}G^{\varepsilon}\left(u_c\right) d x\\
			&={c^{\frac{2(N-2)}{N}}}\frac{\|\nabla u_R\|_2^2}{2\|u_R\|_2^{\frac{2(N-2)}{N}}}
			+{c^{\frac{2(N-q)}{N}}} \frac{\|\nabla u_R\|_q^q}{q\|u_R\|_2^{\frac{2(N-q)}{N}}}-
			c^2\frac{ \int_{\mathbb{R}^N} G^{\varepsilon}(u_R) d x }{\|u_R\|_2^2} \\
			& \leq {c^{\frac{2(N-2)}{N}}} \frac{\|\nabla u_R\|_2^2}{2\|u_R\|_2^{\frac{2(N-2)}{N}}}
			+{c^{\frac{2(N-q)}{N}}} \frac{\|\nabla u_R\|_q^q}{q\|u_R\|_2^{\frac{2(N-q)}{N}}}- c^2 \frac{1}{\|u_R\|_2^2}.
		\end{aligned}
$$
{Since $\frac{2(N-2)}{N}<2$ and $\frac{2(N-q)}{N}<2$,
$ J_{\varepsilon}\left(u_c\right) \rightarrow -\infty$
as $c \rightarrow \infty$}. {This concludes the proof.}
	\end{proof}
	
	\begin{Lemma}\label{LemmaSubadd}
		Assume that $g$ satisfies $(g_0)-(g_3)$. Then for any $c_1, c_2>0$, there holds $$m_{\varepsilon}\left(\sqrt{c_1^2+c_2^2}\right) \leq m_{\varepsilon}\left(c_1\right)+m_{\varepsilon}\left(c_2\right).$$
	\end{Lemma}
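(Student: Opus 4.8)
The plan is to prove the subadditivity inequality for $m_\varepsilon$ via a scaling and concatenation argument. Fix $c_1,c_2>0$ and let $c:=\sqrt{c_1^2+c_2^2}$. First I would observe that $m_\varepsilon$ is well-defined on every $\mathcal D(\rho)$ by Lemma~\ref{LemmaBoundedbelow}. For $i=1,2$, pick a minimizing sequence $(u_n^i)_n\subset\mathcal D(c_i)$ for $J_\varepsilon$, so $J_\varepsilon(u_n^i)\to m_\varepsilon(c_i)$. The key device is the dilation $u\mapsto u(\theta^{-1}\cdot)$, under which $\|u(\theta^{-1}\cdot)\|_2^2=\theta^N\|u\|_2^2$. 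Thus for a fixed $u$ with $\|u\|_2\le c_i$, writing $v_\theta:=u(\theta^{-1}\cdot)$ with $\theta\ge 1$ slightly bigger than $1$, one has $\|v_\theta\|_2^2=\theta^N\|u\|_2^2$ which can be pushed up to, but is controlled by, $c_i^2$ if we instead shrink; the useful direction is actually to note that $J_\varepsilon$ evaluated along such dilations of a fixed function tends to a value arbitrarily close to $J_\varepsilon(u)$ as $\theta\to 1^+$, while the mass scales continuously. So given $\eta>0$ I can choose $w^i\in\mathcal D(c_i)$ with $J_\varepsilon(w^i)<m_\varepsilon(c_i)+\eta$, and by a small dilation arrange in addition that $\|w^i\|_2<c_i$ strictly (using that dilating by $\theta<1$ decreases the mass and changes $J_\varepsilon$ only slightly, by continuity of all three terms in $\theta$).

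Next I would make $w^1$ and $w^2$ have disjoint supports by translation. Since $X\supset\mathcal C_0^\infty(\mathbb R^N)$ is dense in the relevant norm and $J_\varepsilon$ is continuous on $X$, I may further assume $w^1,w^2$ are compactly supported (replace each by a $\mathcal C_0^\infty$ approximation, absorbing the error into $\eta$ and keeping the strict mass inequality $\|w^i\|_2<c_i$). Now pick $y\in\mathbb R^N$ with $|y|$ large enough that $\operatorname{supp} w^1$ and $\operatorname{supp}\big(w^2(\cdot-y)\big)$ are disjoint, and set $w:=w^1+w^2(\cdot-y)$. Because the supports are disjoint, all the integrals in $J_\varepsilon$ split additively:
\begin{equation*}
J_\varepsilon(w)=J_\varepsilon(w^1)+J_\varepsilon(w^2)<m_\varepsilon(c_1)+m_\varepsilon(c_2)+2\eta,
\end{equation*}
and likewise $\|w\|_2^2=\|w^1\|_2^2+\|w^2\|_2^2<c_1^2+c_2^2=c^2$, so $w\in\mathcal D(c)$. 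Hence $m_\varepsilon(c)\le J_\varepsilon(w)<m_\varepsilon(c_1)+m_\varepsilon(c_2)+2\eta$, and letting $\eta\to 0^+$ gives $m_\varepsilon\big(\sqrt{c_1^2+c_2^2}\big)\le m_\varepsilon(c_1)+m_\varepsilon(c_2)$.

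The only genuinely delicate point is the support-disjointness/additivity step: one must ensure that truncating or approximating $w^i$ by compactly supported functions does not spoil membership in $X$ or the near-optimality, and that the $q$-gradient term as well as the $G_+$, $G_-^\varepsilon$ terms all split exactly under disjoint supports — which they do, since $\varphi_\varepsilon$ and $g_\pm$ are pointwise functions of $u$ and $|\nabla w|^s = |\nabla w^1|^s + |\nabla w^2(\cdot-y)|^s$ pointwise on disjoint supports. An equivalent and perhaps cleaner route avoiding the density step is to work directly with dilations: take near-minimizers $w^i\in\mathcal D(c_i)$, dilate by $\theta_i<1$ close to $1$ so that $\|\theta_i\text{-dilate of }w^i\|_2^2 = c_i^2 - \kappa$ for a common small $\kappa>0$, truncate outside a large ball (the tails are small in $X$-norm and in the $G_+$-integral by $(g_1),(g_3)$ and dominated convergence, and $G_-^\varepsilon(s)\lesssim_\varepsilon |s|^2$ controls the other term), translate to disjoint supports, and add. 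I expect the bookkeeping of these error terms to be the main—though entirely routine—obstacle; no compactness or Brézis–Lieb argument is needed here.
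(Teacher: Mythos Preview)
Your argument is correct and follows essentially the same route as the paper: pick near-minimizers in $\mathcal D(c_i)$, approximate by $\mathcal C_0^\infty$ functions, translate to disjoint supports, and use additivity of $J_\varepsilon$ on disjoint supports to conclude. The dilation step you introduce to force the strict inequality $\|w^i\|_2<c_i$ is unnecessary, since $\mathcal D(c)$ is defined with a non-strict constraint and $\|w\|_2^2=\|w^1\|_2^2+\|w^2\|_2^2\le c_1^2+c_2^2$ already gives $w\in\mathcal D(c)$; the paper's proof proceeds directly without it.
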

	\begin{proof}
		Since $m_{\varepsilon}(c)=\inf _{\mathcal{D}(c)} J_{\varepsilon}(u)$, from Lemma \ref{LemmaBoundedbelow}, it is easy to see that $m_{\varepsilon}(c)$ is finite for any $c>0$.
		 Fix
{$c_1,
\, c_2>0$ and $\delta>0$,} there exist $u_1, u_2 \in$ $\mathcal{C}_0^{\infty}\left(\mathbb{R}^N\right)$ such that $\Vert u_i\Vert_2 \leq c_i$ and $J_{\varepsilon}\left(u_i\right) \leq m_{\varepsilon}\left(c_i\right)+\delta$, where $c_{i}>0$ and $i=1,2$. In virtue of the translation invariance, we can assume that $u_1$ and $u_2$ have disjoint supports. Then $\left\|u_1+u_2\right\|_2^2=\left\|u_1\right\|_2^2+\left\|u_2\right\|_2^2 \leq c_1^2+c_2^2$ and so
		$$
		m_{\varepsilon}\left(\sqrt{c_1^2+c_2^2}\right) \leq J_{\varepsilon}\left(u_1+u_2\right)= J_{\varepsilon}\left(u_1\right)+J_{\varepsilon}\left(u_2\right)\leq m_{\varepsilon}\left(c_1\right)+m_{\varepsilon}\left(c_2\right)+2 \delta.
		$$
{Letting $\delta\rightarrow 0$, we get} $m_{\varepsilon}\left(\sqrt{c_1^2+c_2^2}\right) \leq m_{\varepsilon}\left(c_1\right)+m_{\varepsilon}\left(c_2\right)$.
	\end{proof}
	\begin{Lemma}\label{LemmaStrictSubadd}
	Assume that $g$ satisfies $(g_0)-(g_3)$. Then for any $c_1, c_2>0$, if there exist $u_i \in \mathcal{D}\left(c_i\right)$ such that $J_{\varepsilon}\left(u_i\right)=m_{\varepsilon}\left(c_i\right)$ for $ i=1,2$, and $\left(u_1, u_2\right) \neq(0,0)$, we have $$
	m_{\varepsilon}\left(\sqrt{c_1^2+c_2^2}\right)<m_{\varepsilon}\left(c_1\right)+m_{\varepsilon}\left(c_2\right).
	$$
	\end{Lemma}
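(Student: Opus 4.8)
The plan is to exploit the strict subadditivity via a rescaling/dilation argument, adapting the classical strategy (à la Mederski–Schino, Shibata) to the $(2,q)$-setting. We are given minimizers $u_i\in\mathcal D(c_i)$ with $J_\varepsilon(u_i)=m_\varepsilon(c_i)$ and $(u_1,u_2)\neq(0,0)$; without loss of generality assume $u_1\neq 0$. The key observation is that a nontrivial minimizer must actually lie on the sphere, i.e. $\|u_1\|_2=c_1$: indeed if $\|u_1\|_2<c_1$, one could slightly dilate $u_1$ to increase its mass while strictly decreasing $J_\varepsilon$ (the gradient terms scale down and the $G^\varepsilon$-term, being negative of order $-\int G^\varepsilon(u)$, behaves favorably), contradicting minimality — but more to the point, from this same dilation one extracts strict monotonicity. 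So first I would establish the strict monotonicity statement: if $0<c<c'$ and $m_\varepsilon(c)$ is attained at a nontrivial $u$, then $m_\varepsilon(c')<m_\varepsilon(c)$.

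To prove this monotonicity, set $u^\theta(x):=u(x/\theta)$ for $\theta>1$, so that $\|u^\theta\|_2^2=\theta^N\|u\|_2^2$. Then
\[
J_\varepsilon(u^\theta)=\frac{\theta^{N-2}}{2}\|\nabla u\|_2^2+\frac{\theta^{N-q}}{q}\|\nabla u\|_q^q-\theta^N\int_{\mathbb R^N}G^\varepsilon(u)\,dx,
\]
where $G^\varepsilon=G_+-G_-^\varepsilon$. Differentiating at $\theta=1$ gives
\[
\frac{d}{d\theta}\Big|_{\theta=1}J_\varepsilon(u^\theta)=\frac{N-2}{2}\|\nabla u\|_2^2+\frac{N-q}{q}\|\nabla u\|_q^q-N\int_{\mathbb R^N}G^\varepsilon(u)\,dx.
\]
Now I use that $u$ minimizes $J_\varepsilon$ on $\mathcal D(c)$ (equivalently on $\mathcal S(\|u\|_2)$) and satisfies the corresponding Euler–Lagrange equation $-\Delta u-\Delta_q u+\lambda u=g^\varepsilon(u)$ with $\lambda\ge 0$ (the sign of $\lambda$ coming from $m_\varepsilon(c)<0$, as in the existing lemmas): pairing with $u$ yields $\|\nabla u\|_2^2+\|\nabla u\|_q^q+\lambda\|u\|_2^2=\int g^\varepsilon(u)u$, and the Pohozaev-type identity (or simply the fact that $\theta\mapsto J_\varepsilon(u^\theta)$ restricted appropriately has an interior critical behavior) forces $\frac{d}{d\theta}|_{\theta=1}J_\varepsilon(u^\theta)\le 0$. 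Combining, along the dilation path for $\theta$ slightly above $1$ the value $J_\varepsilon(u^\theta)$ is $\le m_\varepsilon(c)$, and in fact strictly less unless $u\equiv 0$, because at least one of the three nontrivial terms forces strict decrease (if $\nabla u=0$ then $u=0$; otherwise the gradient coefficients $\tfrac{N-2}{2}\theta^{N-2},\tfrac{N-q}{q}\theta^{N-q}$ grow slower in $\theta$ than $\theta^N$). Since $\|u^\theta\|_2^2=\theta^N\|u\|_2^2$ can be made equal to any prescribed $c'^2>\|u\|_2^2\ge$ suitable value, and then further enlarged to $c'$ by adding a far-away bump with arbitrarily small energy, one concludes $m_\varepsilon(c')<m_\varepsilon(c)$ whenever $c'>c$ and $m_\varepsilon(c)$ is attained nontrivially.

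With strict monotonicity in hand, the strict subadditivity follows quickly. Assume WLOG $c_1,c_2>0$ (if, say, $c_2$ could be zero the statement degenerates and there is nothing to do since $(u_1,u_2)\neq(0,0)$ forces at least $u_1\neq 0$, but here both $c_i>0$ by hypothesis) and that $u_1\neq 0$. Write $c:=\sqrt{c_1^2+c_2^2}$, so $c>c_1$ and $c>c_2$. Since $J_\varepsilon(u_1)=m_\varepsilon(c_1)$ with $u_1\neq 0$, strict monotonicity gives $m_\varepsilon(c)<m_\varepsilon(c_1)$. But by Lemma~\ref{LemmaSubadd} we also need only the trivial bound $m_\varepsilon(c_2)\le 0$ — which holds because $0\in\mathcal D(c_2)$ and $J_\varepsilon(0)=0$, so $m_\varepsilon(c_2)\le J_\varepsilon(0)=0$. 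Hence
\[
m_\varepsilon(c)<m_\varepsilon(c_1)\le m_\varepsilon(c_1)+m_\varepsilon(c_2),
\]
which is exactly the claimed strict inequality. The main obstacle I anticipate is making the dilation step fully rigorous in the $(2,q)$-framework: one must check that $u^\theta\in X$ with $\|\nabla u^\theta\|_q^q=\theta^{N-q}\|\nabla u\|_q^q$ (straightforward) and, more delicately, that $\int G^\varepsilon(u^\theta)=\theta^N\int G^\varepsilon(u)$ with the integral finite and the map $\theta\mapsto J_\varepsilon(u^\theta)$ genuinely $C^1$ near $\theta=1$ — this uses the perturbed structure $G_-^\varepsilon(s)\lesssim_\varepsilon |s|^2$ for small $|s|$ together with $(g_1)$–$(g_3)$, exactly as in the proof that $J_\varepsilon\in C^1(X)$. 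A secondary point requiring care is the sign of the Lagrange multiplier/the Pohozaev inequality $\frac{d}{d\theta}|_{\theta=1}J_\varepsilon(u^\theta)\le 0$, which one should either derive from minimality on $\mathcal S(\|u\|_2)$ directly (comparing with competitors $u^\theta$ for $\theta<1$ that decrease the mass, then renormalizing) or cite from the analysis of \eqref{Equationperturbed} already carried out for Theorem~\ref{theoremPerturbed}.
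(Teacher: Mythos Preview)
Your endgame is valid and actually a bit slicker than the paper's: once you have strict monotonicity $m_\varepsilon(c')<m_\varepsilon(c_1)$ for $c'>c_1$ (with $u_1\neq0$), the bound $m_\varepsilon(c_2)\le J_\varepsilon(0)=0$ finishes the proof in one line, with no case analysis on $c_1\gtrless c_2$. The paper instead proves the scaling inequality $m_\varepsilon(\sqrt{s}\,c_i)\le s\,m_\varepsilon(c_i)$ (strict when $s>1$ and $u_i\neq0$) and then splits into cases.

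Where your proposal goes astray is in how you obtain strict monotonicity. You differentiate $\theta\mapsto J_\varepsilon(u^\theta)$ and want the derivative at $\theta=1$ to be $\le0$; to justify this you invoke the Euler--Lagrange equation for $u$, a Lagrange multiplier with $\lambda\ge0$, and the Pohozaev identity. None of this is available at this point in the paper: the lemma assumes only that $u_i$ is a minimizer on $\mathcal D(c_i)$, not that it solves any PDE, and your suggestion to ``cite from the analysis of \eqref{Equationperturbed} already carried out for Theorem~\ref{theoremPerturbed}'' would be circular, since that theorem is proved via Lemma~\ref{LemmaStrongconver}, which in turn uses the very lemma you are proving. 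Your alternative (compare with $u^\theta$ for $\theta<1$) can be made to work, but it is still indirect.

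The paper's argument avoids all of this with a one--line algebraic observation: writing
\[
J_\varepsilon\bigl(u(s^{-1/N}\cdot)\bigr)=s\Bigl(\tfrac{1}{2s^{2/N}}\|\nabla u\|_2^2+\tfrac{1}{qs^{q/N}}\|\nabla u\|_q^q-\int_{\mathbb R^N}G^\varepsilon(u)\,dx\Bigr),
\]
the factors $s^{-2/N},s^{-q/N}\le1$ for $s\ge1$ give $J_\varepsilon(u(s^{-1/N}\cdot))\le sJ_\varepsilon(u)$, strictly when $s>1$ and $\nabla u\not\equiv0$. No PDE, no Pohozaev, no multiplier sign. From this both strict monotonicity (using $m_\varepsilon(c_1)\le0$) and the paper's case split follow immediately. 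I would replace your differentiation/Pohozaev paragraph with this direct scaling computation; then your concluding step via $m_\varepsilon(c_2)\le0$ is perfectly fine.
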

	\begin{proof}
		First of all, {for any $s>0$}
$$\int_{\mathbb{R}^N}\left\vert \left(u(s^{-1 / N}x)\right)\right\vert^{2}dx=s\int_{\mathbb{R}^N}\vert u(x)\vert^{2}dx.$$
 For every $s \geq 1$ and $i=1,2$, since {$J_{\varepsilon}\left(u_i\right)=m_{\varepsilon}\left(c_i\right)$}, we have
		\begin{align*}
			m_{\varepsilon}\left(\sqrt{s} c_i\right) &\leq J_{\varepsilon}\left(u_i\left(s^{-1 / N}x\right)\right)\\
			&=s\left(\frac{1}{2 s^{2 / N}} \int_{\mathbb{R}^N}\left|\nabla u_i\right|^2 d x+\frac{1}{q s^{q / N}} \int_{\mathbb{R}^N}\left|\nabla u_i\right|^q d x-\int_{\mathbb{R}^N} G^{\varepsilon}\left(u_i\right) d x\right)\\
			&\leq s J_{\varepsilon}\left(u_i\right)\\
			&=s m_{\varepsilon}\left(c_i\right) .
		\end{align*}
		Moreover, if $s>1$ and $u_i \neq 0$, then $m_{\varepsilon}\left(\sqrt{s} c_i\right)<s m_{\varepsilon}\left(c_i\right)$ for $i=1, 2$. Without loss of generality, assume that $u_1 \neq 0$. If $c_1 \geq c_2$, then
		$$
		m_{\varepsilon}\left(\sqrt{c_1^2+c_2^2}\right)<\frac{c_1^2+c_2^2}{c_1^2} m_{\varepsilon}\left(c_1\right)=m_{\varepsilon}\left(c_1\right)+\frac{c_2^2}{c_1^2} m_{\varepsilon}\left(c_1\right) \leq m_{\varepsilon}\left(c_1\right)+m_{\varepsilon}\left(c_2\right) .
		$$
	If $c_1<c_2$, then
		$$
		m_{\varepsilon}\left(\sqrt{c_1^2+c_2^2}\right) \leq \frac{c_1^2+c_2^2}{c_2^2} m_{\varepsilon}\left(c_2\right)=\frac{c_1^2}{c_2^2} m_{\varepsilon}\left(c_2\right)+m_{\varepsilon}\left(c_2\right)<m_{\varepsilon}\left(c_1\right)+m_{\varepsilon}\left(c_2\right) .
		$$
The proof is completed.
	\end{proof}
	\begin{remark}\label{reJ}
{\rm		(a) Lemmas \ref{LemmaSubadd} and \ref{LemmaStrictSubadd}  still hold if $J_{\varepsilon}$ is replaced with $J$.\\
		{(b) The condition \( J_{\varepsilon}(u_i) = m_{\varepsilon}(c_i), i = 1, 2 \) can be relaxed to \( J_{\varepsilon}(u_1) = m_{\varepsilon}(c_1) \) or \( J_{\varepsilon}(u_2) = m_{\varepsilon}(c_2) \), and the
		Lemma \ref{LemmaStrictSubadd} still holds}.}
	\end{remark}
	
	\begin{Lemma}\label{LemmaPSseq}
		Assume that $c>0$ and $(g_0)-(g_3)$ hold. Let ${(\tilde{u}_n)_n} \subset \mathcal{D}(c)$ be a minimizing sequence for $J_{\varepsilon}$ at level $m_{\varepsilon}(c)$. Then, there exists another minimizing sequence ${(u_n)_n} \subset \mathcal{D}(c)$ bounded in $X$, and $\lambda_{\varepsilon} \in \mathbb{R}$ such that {for all $\varphi \in X$}
		$$
		\left\|u_n-\tilde{u}_n\right\|_X \rightarrow 0,
\quad J_{\varepsilon}^{\prime}\left(u_n\right) \varphi+\lambda_{\varepsilon}\int_{\mathbb{R}^N} u_n \varphi d x \rightarrow 0  \text { as } n \rightarrow\infty.
		$$
	Moreover, if $\lim \limits _{n \rightarrow \infty}\left\|u_n\right\|_2<c$, then $\lambda_{\varepsilon}=0$.
	\end{Lemma}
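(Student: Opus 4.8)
The plan is to run Ekeland's variational principle on the complete metric space $\mathcal D(c)$, metrized by $\|\cdot\|_X$, and then convert the resulting approximate minimizer on the ball into a Palais--Smale sequence for $J_\varepsilon$ corrected by a Lagrange multiplier. Since $J_\varepsilon\in\mathcal C^1(X)$ is bounded from below on $\mathcal D(c)$ by Lemma~\ref{LemmaBoundedbelow} (and $m_\varepsilon(c)$ is finite, being $\le J_\varepsilon(0)=0$), Ekeland's principle applied to $(\tilde u_n)_n$, with parameter $\eta_n:=\big(J_\varepsilon(\tilde u_n)-m_\varepsilon(c)\big)^{1/2}\to 0$, produces $(u_n)_n\subset\mathcal D(c)$ with $J_\varepsilon(u_n)\le J_\varepsilon(\tilde u_n)$ (hence $J_\varepsilon(u_n)\to m_\varepsilon(c)$, so still minimizing), $\|u_n-\tilde u_n\|_X\le\eta_n\to 0$, and
\begin{equation}\label{eq:ekel}
J_\varepsilon(v)\ge J_\varepsilon(u_n)-\eta_n\|v-u_n\|_X\qquad\text{for all }v\in\mathcal D(c)
\end{equation}
(if $J_\varepsilon(\tilde u_n)=m_\varepsilon(c)$ one simply keeps $u_n=\tilde u_n$). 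Boundedness of $(u_n)_n$ in $X$ is read off from the chain of inequalities \eqref{Eq-GN1}--\eqref{Eq-GN2} in the proof of Lemma~\ref{LemmaBoundedbelow}: retaining the full gradient terms there, $J_\varepsilon(u_n)\to m_\varepsilon(c)$ together with $\|u_n\|_2\le c$ forces $\|\nabla u_n\|_2+\|\nabla u_n\|_q=O(1)$, whence $\|u_n\|_X=O(1)$.

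Next I would extract the Euler--Lagrange relation from \eqref{eq:ekel}. Fix $w\in X$ with $\int_{\mathbb R^N}u_nw\,dx\le 0$. For $t>0$ small one has $\|u_n+tw\|_2^2\le\|u_n\|_2^2+2t\int_{\mathbb R^N}u_nw\,dx+t^2\|w\|_2^2\le c^2$, so $u_n+tw\in\mathcal D(c)$ is admissible in \eqref{eq:ekel}; dividing by $t$ and letting $t\to 0^+$ gives $J_\varepsilon'(u_n)w\ge-\eta_n\|w\|_X$, which extends by density and continuity of $J_\varepsilon'(u_n)$ to every $w$ with $\int_{\mathbb R^N}u_nw\,dx\le 0$. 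An approximate Lagrange multiplier rule (a functional in $X^*$ bounded below by $-\eta_n\|\cdot\|_X$ on the closed half-space $\{w\in X:\int_{\mathbb R^N}u_nw\,dx\le0\}$ agrees, up to an $X^*$-error $\le\eta_n$, with a nonnegative multiple of $w\mapsto\int_{\mathbb R^N}u_nw\,dx$) then produces $\lambda_n\ge0$ with
\begin{equation}\label{eq:ELn}
\Big|J_\varepsilon'(u_n)\varphi+\lambda_n\!\int_{\mathbb R^N}u_n\varphi\,dx\Big|\le\eta_n\|\varphi\|_X\qquad\text{for all }\varphi\in X .
\end{equation}
When $u_n=0$, or more generally when $\|u_n\|_2<c$, every $w\in X$ is an admissible direction for both signs of $t$, so \eqref{eq:ELn} holds with $\lambda_n=0$.

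To obtain a single multiplier I would test \eqref{eq:ELn} with $\varphi=u_n$: by $(g_1)$--$(g_3)$, the embedding $X\hookrightarrow L^m(\mathbb R^N)$ for $m\in[2,q']$, and the $\varepsilon$-truncation (which makes $|\varphi_\varepsilon(s)g_-(s)s|\lesssim_\varepsilon|s|^2+|s|^{q'}$, so the strongly sublinear part is controlled), the quantity $J_\varepsilon'(u_n)u_n$ is bounded; since $\eta_n\|u_n\|_X\to0$, it follows that $|\lambda_n|\,\|u_n\|_2^2=O(1)$. If $\|u_n\|_2$ stays bounded away from $0$, then $(\lambda_n)_n$ is bounded, and along a subsequence (relabelled) $\lambda_n\to\lambda_\varepsilon\ge0$; since $|\int_{\mathbb R^N}u_n\varphi\,dx|\le c\|\varphi\|_2$, passing to the limit in \eqref{eq:ELn} with $\varphi$ fixed yields $J_\varepsilon'(u_n)\varphi+\lambda_\varepsilon\int_{\mathbb R^N}u_n\varphi\,dx\to0$ for every $\varphi\in X$. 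If instead $\|u_n\|_2\to0$, the coercivity estimates force $\|\nabla u_n\|_2,\|\nabla u_n\|_q\to0$, hence $u_n\to0$ in $X$, $m_\varepsilon(c)=0$, and the conclusion holds trivially with $\lambda_\varepsilon=0$.

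For the last assertion, if $\lim_n\|u_n\|_2=\ell<c$ then $\|u_n\|_2<c$ for $n$ large, so by the remark above we may take $\lambda_n=0$ there, and consequently $\lambda_\varepsilon=0$. The step I expect to demand the most care is precisely the extraction and uniform bound of the multiplier: verifying that $J_\varepsilon'(u_n)u_n$ is bounded — where the strongly sublinear behaviour of $g$ enters but is tamed by the cutoff $\varphi_\varepsilon$ — and cleanly separating the interior and boundary cases on $\mathcal D(c)$ so that $\lambda_\varepsilon$ is genuinely nonnegative and vanishes whenever the mass constraint is not saturated.
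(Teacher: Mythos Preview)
Your argument is correct and follows the paper's route: Ekeland's principle on the closed ball $\mathcal D(c)$, extraction of an approximate Lagrange multiplier, and boundedness of $(\lambda_n)_n$ via the test function $\varphi=u_n$; the paper simply cites \cite[Theorem~2.4 and Proposition~5.12]{Willem1996Minimax} where you work the half-space variations out by hand. One minor point: the ``$X^*$-error $\le\eta_n$'' and ``$\lambda_n\ge0$'' in your approximate multiplier step are slight overclaims (the natural projection $w\mapsto w-\big(\tfrac{1}{\|u_n\|_2^2}\int u_nw\big)u_n$ gives only an error $\le C\eta_n$ and $\lambda_n\ge -O(\eta_n)$), but this is harmless for the conclusion. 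For the final assertion the paper argues differently, passing to the weak limit $u_\varepsilon$, declaring it an interior local minimizer, and reading off $J_\varepsilon'(u_\varepsilon)=0$; your direct observation that $\|u_n\|_2<c$ eventually makes every direction admissible in Ekeland's inequality (hence $\lambda_n=0$) is cleaner and sidesteps the lower-semicontinuity and minimizer claims the paper leaves implicit.
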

	
	\begin{proof}
		Let {$(\tilde{u}_n)_n$ be} a minimizing sequence for $J_{\varepsilon}$ at level $m_{\varepsilon}(c)$.
		{By} Ekeland's variational principle \cite[Theorem 2.4]{Willem1996Minimax}, we derive a new minimizing sequence ${(u_n)_n} \subset \mathcal{D}(c)$, that is also a Palais-Smale sequence for $J_{\varepsilon}$ on $\mathcal{D}(c)$. By \cite[Proposition 5.12]{Willem1996Minimax}, there exist ${(\lambda_n)_n}\subset\mathbb{R}$ , such that
		{for all $\varphi \in X$}
		$$
		\left\|u_n-\tilde{u}_n\right\|_X \rightarrow 0,\quad J_{\varepsilon}^{\prime}\left(u_n\right) \varphi+\lambda_n \int_{\mathbb{R}^N} u_n \varphi d x \rightarrow 0  \text { as } n \rightarrow\infty.
		$$	
		Now we prove that
	 $({u}_n)_n$ is bounded in $X$.
		If $\frac{2 N}{N+2} < q < 2$, from \eqref{Eq-GN1} and Lemma \ref{Lemmam(c)<0} we have that
			\begin{equation}\label{Eq-GN11}
			\begin{aligned}
				0>J_{\varepsilon}(u_n) \geq \frac{1}{2}\|\nabla u_n\|_2^2-\int_{\mathbb{R}^N} G_{+}(u_n) d x
				\geq\|\nabla u_n\|_2^2\left(\frac{1}{2}-\delta C_{N, \bar{q}}^{\bar{q}} c^{\bar{q}\left(1-\delta_{\bar{q}}\right)}\|\nabla u_n\|_2^{\bar{q} \delta_{\bar{q}}-2}\right)-C_\delta c^2 .
			\end{aligned}
		\end{equation}
Therefore, $\|\nabla u_n\|_2$ is bounded in $\mathbb{R}$,
from \eqref{eq-G} and Lemma \ref{LemmaEmbed}, we know that $ \int_{\mathbb{R}^N} G_{+}(u_n) d x  $ is bounded in $\mathbb{R}$. Since
$$
0>J_{\varepsilon}(u_n)  \geq \frac{1}{q}\|\nabla u_n\|_q^q+\frac{1}{2}\|\nabla u_n\|_2^2-\int_{\mathbb{R}^N} G_{+}(u_n) d x,
$$
it follows that $\left\|\nabla u_n\right\|_q$ is also bounded in $\mathbb{R}$. Thus, $({u}_n)_n$ is bounded in $X$. If $2< q <N$, The proof is similar, and we omit it here.
 Therefore, there exists ${u}_{\varepsilon}
{\in X}$ such that, {up to a subsequence,}
${u}_n \rightharpoonup {u}_{\varepsilon}$ in $X$ and ${u}_n \rightarrow {u}_{\varepsilon}$ in $L_{loc}^m\left(\mathbb{R}^N\right)$ for every {$m$,
with} $2 \leq m<q^{\prime}$ and ${u}_n \rightarrow {u}_{\varepsilon}$ for a.e. in $\mathbb{R}^N$. By Fatou's lemma, it follows that ${u}_{\varepsilon} \in \mathcal{D}(c)$.
 Let $\varphi=u_n$, it is easy to show that {$(\lambda_n)_n$} is bounded in $\mathbb{R}$.
 we may assume $\lambda_n\rightarrow \lambda_\varepsilon \text { as } n \rightarrow\infty $, up to a subsequence if necessary. Hence
		$$
	 J_{\varepsilon}^{\prime}\left(u_n\right) \varphi+\lambda_{\varepsilon}\int_{\mathbb{R}^N} u_n \varphi d x \rightarrow 0  \text { as } n \rightarrow\infty.
		$$
		If  $\lim \limits _{n \rightarrow \infty}\left\|u_n\right\|_2<c$, {then} ${u}_{\varepsilon} \in\mathcal{D}(c)\backslash\mathcal{S}(c) $ and is an interior point of $\mathcal{D}(c)$. {Therefore,} ${u}_{\varepsilon}$ is a local minimizer of $J$ on $X$. Hence
		$$
		J_{\varepsilon}^{\prime}\left({u}_{\varepsilon}\right) \varphi=0 \quad \mbox{for all }\varphi \in {X},
		$$
this implies that  $\lambda_\varepsilon=0$.
	\end{proof}
	
	{\begin{Lemma}\label{Lem3-ae}
	Assume that $c>0$ and $(g_0)-(g_3)$ hold. Let ${(\tilde{u}_n)_n} \subset \mathcal{D}(c)$ be a minimizing sequence for $J_{\varepsilon}$ at level $m_{\varepsilon}(c)$.
		Then, there exists another minimizing sequence $({u}_n)_n \subset \mathcal{D}(c)$ for $J_{\varepsilon}$, such that for some $u_{\varepsilon}\in X$,
		$$
		\nabla u_n \rightarrow \nabla u_{\varepsilon} \text { a.e. } \text{ in } \mathbb{R}^N.
		$$
	\end{Lemma}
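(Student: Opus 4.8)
The plan is to start from the Palais–Smale sequence produced in Lemma~\ref{LemmaPSseq}: by that lemma there is a minimizing sequence $(u_n)_n\subset\mathcal D(c)$, bounded in $X$, with $\|u_n-\tilde u_n\|_X\to 0$ and $J_\varepsilon'(u_n)\varphi+\lambda_\varepsilon\int_{\mathbb R^N}u_n\varphi\,dx\to 0$ for all $\varphi\in X$, together with a weak limit $u_\varepsilon\in X$ such that $u_n\rightharpoonup u_\varepsilon$ in $X$, $u_n\to u_\varepsilon$ in $L^m_{\mathrm{loc}}$ for $2\le m<q'$, and $u_n\to u_\varepsilon$ a.e. Since $\|u_n-\tilde u_n\|_X\to0$, this same sequence $(u_n)_n$ is still minimizing for $J_\varepsilon$, so it suffices to prove $\nabla u_n\to\nabla u_\varepsilon$ a.e. for this particular $(u_n)_n$.

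The core of the argument is a standard Boccardo–Murat type truncation/testing scheme. First I would fix $R>0$ and a cut-off $\chi_R\in\mathcal C_0^\infty(\mathbb R^N)$ with $\chi_R\equiv1$ on $B_R$, $0\le\chi_R\le1$, $\mathrm{supp}\,\chi_R\subset B_{2R}$, and for a parameter $t>0$ use the test function $\varphi_n:=\chi_R\,T_t(u_n-u_\varepsilon)$, where $T_t(s)=\max\{-t,\min\{t,s\}\}$ is the usual truncation; this $\varphi_n$ lies in $X$ and is bounded in $X$. Plugging $\varphi_n$ into the equation $J_\varepsilon'(u_n)\varphi_n+\lambda_\varepsilon\int u_n\varphi_n=o_n(1)$ and using $\varphi_n\rightharpoonup0$ in $X$ (because $T_t(u_n-u_\varepsilon)\rightharpoonup0$), the lower-order terms $\lambda_\varepsilon\int u_n\varphi_n$, $\int (g_+(u_n)-\varphi_\varepsilon(u_n)g_-(u_n))\varphi_n$, and the terms where the gradient falls on $\chi_R$ all go to zero: the nonlinear terms by $(g_0)$–$(g_3)$, the local $L^m$-convergence of $u_n$ on $B_{2R}$ and uniform integrability (Vitali), and the gradient-on-$\chi_R$ terms by weak convergence of $\nabla u_n$ in $L^2$ and in $L^q$. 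What remains is
\begin{equation*}
\int_{\mathbb R^N}\chi_R\Big(|\nabla u_n|^{-\,0}\Big)\cdots
\end{equation*}
— more precisely,
\begin{equation*}
\int_{B_{2R}}\chi_R\,\big(|\nabla u_n|^{q-2}\nabla u_n-|\nabla u_\varepsilon|^{q-2}\nabla u_\varepsilon\big)\cdot\nabla T_t(u_n-u_\varepsilon)\,dx+\int_{B_{2R}}\chi_R\,\big(\nabla u_n-\nabla u_\varepsilon\big)\cdot\nabla T_t(u_n-u_\varepsilon)\,dx\ \longrightarrow\ 0,
\end{equation*}
after subtracting the (weakly vanishing) contributions of $|\nabla u_\varepsilon|^{q-2}\nabla u_\varepsilon\cdot\nabla\varphi_n$ and $\nabla u_\varepsilon\cdot\nabla\varphi_n$, which tend to $0$ since $\nabla T_t(u_n-u_\varepsilon)\rightharpoonup0$ in $L^2$ and in $L^q$ on $B_{2R}$. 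On the set $\{|u_n-u_\varepsilon|\le t\}$ we have $\nabla T_t(u_n-u_\varepsilon)=\nabla u_n-\nabla u_\varepsilon$, and by the monotonicity inequality of Lemma~\ref{Remarkpost} the integrand is nonnegative there; on $\{|u_n-u_\varepsilon|>t\}$ one controls the sign and size by Hölder. Hence, writing $e_n^{t}:=\int_{B_R}\big[(|\nabla u_n|^{q-2}\nabla u_n-|\nabla u_\varepsilon|^{q-2}\nabla u_\varepsilon)+(\nabla u_n-\nabla u_\varepsilon)\big]\cdot(\nabla u_n-\nabla u_\varepsilon)\,\mathbf 1_{\{|u_n-u_\varepsilon|\le t\}}\,dx$, one gets $\limsup_n e_n^t\to0$ as $t$ is handled, and in fact $e_n^t\to0$ for fixed $t$.

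From $e_n^t\to0$ the a.e. convergence of gradients on $B_R$ follows by the by-now classical argument: the integrand $h_n:=[(|\nabla u_n|^{q-2}\nabla u_n-|\nabla u_\varepsilon|^{q-2}\nabla u_\varepsilon)+(\nabla u_n-\nabla u_\varepsilon)]\cdot(\nabla u_n-\nabla u_\varepsilon)$ is nonnegative and tends to $0$ in $L^1(\{|u_n-u_\varepsilon|\le t\}\cap B_R)$, hence (passing to a subsequence) a.e. on $B_R$; combining the two quantitative bounds in Lemma~\ref{Remarkpost} — the one for $1\le q<2$ involving $|\nabla u_n-\nabla u_\varepsilon|^2/(|\nabla u_n|+|\nabla u_\varepsilon|)^{2-q}$ together with the $L^q$-boundedness of $\nabla u_n$, and the one for $q\ge2$ giving directly $|\nabla u_n-\nabla u_\varepsilon|^q\lesssim h_n$, plus $|\nabla u_n-\nabla u_\varepsilon|^2\le h_n$ from the semilinear part — forces $|\nabla u_n-\nabla u_\varepsilon|\to0$ a.e. on $B_R\cap\{|u_n-u_\varepsilon|\le t\}$, and since $u_n\to u_\varepsilon$ a.e. the set $\{|u_n-u_\varepsilon|>t\}$ is, for a.e. $x$, eventually empty; thus $\nabla u_n\to\nabla u_\varepsilon$ a.e. on $B_R$. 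A diagonal argument over $R\to\infty$ (exhausting $\mathbb R^N$ by balls and extracting nested subsequences, then diagonalizing) upgrades this to $\nabla u_n\to\nabla u_\varepsilon$ a.e. in $\mathbb R^N$, which is the claim. The main obstacle is the bookkeeping in showing that every term other than the two monotone gradient terms genuinely vanishes in the limit — in particular handling the strongly sublinear nonlinearity $G_-^\varepsilon$ near $u=0$ (where one uses $G_-^\varepsilon{}'(s)=\varphi_\varepsilon(s)g_-(s)$ is bounded by $c_\varepsilon|s|$ for $|s|\le1$, so the term behaves like a subcritical one) and controlling the truncation remainder on $\{|u_n-u_\varepsilon|>t\}$ uniformly in $n$; the non-Hilbertian nature of $X$ forces us to run the $L^2$ and $L^q$ estimates in parallel rather than absorbing one into the other, exactly as flagged in the introduction.
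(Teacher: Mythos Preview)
Your proposal is correct and follows essentially the same Boccardo--Murat scheme as the paper: pass to the Palais--Smale sequence from Lemma~\ref{LemmaPSseq}, test the approximate equation against $\chi_R\,T_t(u_n-u_\varepsilon)$ with a cut-off and a truncation, peel off the lower-order and gradient-of-cut-off terms, and use the monotonicity inequality of Lemma~\ref{Remarkpost} on $\{|u_n-u_\varepsilon|\le t\}\cap B_R$ to force $\nabla u_n\to\nabla u_\varepsilon$ a.e., then diagonalize over $R$.

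The one place where you and the paper part ways is in the endgame. You observe (correctly, via Vitali and the local $L^m$-convergence of $u_n$) that the nonlinear term $\int g^\varepsilon(u_n)\,\chi_R\,T_t(u_n-u_\varepsilon)\,dx$ genuinely tends to $0$ for each fixed $t$, so the truncated monotone integral $e_n^t$ itself goes to $0$; combined with the fact that for a.e.\ $x$ the indicator $\mathbf 1_{\{|u_n-u_\varepsilon|\le t\}}(x)$ is eventually $1$, this immediately gives $h_n(x)\to0$ a.e.\ and hence $\nabla u_n\to\nabla u_\varepsilon$ a.e.\ on $B_R$. The paper instead only bounds the nonlinear term by $Ck$ (not $o_n(1)$), also tests with the untruncated $(u_n-u_\varepsilon)\psi_R$, and then runs the classical $\theta$-H\"older splitting of $B_R$ into $\{|u_n-u_\varepsilon|\le k\}$ and its complement, letting $k\to0$ at the very end and appealing to Lemma~\ref{LemmaConverae}. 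Your route is a legitimate simplification here; the paper's version is more robust in settings where the nonlinear term is merely bounded rather than vanishing. Two small quibbles: your claim that ``on $\{|u_n-u_\varepsilon|>t\}$ one controls the sign and size by H\"older'' is moot since $\nabla T_t=0$ there, and the gradient-on-$\chi_R$ terms vanish by \emph{strong} $L^p$-convergence of $T_t(u_n-u_\varepsilon)$ on $B_{2R}$ (dominated convergence), not by weak convergence of $\nabla u_n$ as you wrote.
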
}
	\begin{proof}
		Define
		$$g^{\varepsilon}(s):=\frac{d}{d s} G^{\varepsilon}(s),\quad
		s \in \mathbb{R}.$$
		From Lemma \ref{LemmaPSseq}, we know that there exists anthor bounded sequence $ ({u}_n)_n $ such that for any $v \in {X}$,
		\begin{equation}\label{Eq-aePhi1}
			\begin{aligned}
				o_n(1)=&\left\langle J_{\varepsilon}^{\prime}\left(u_n\right), v \right\rangle+\lambda_{\varepsilon}\int_{\mathbb{R}^N}  u_n vdx\\
				=&\int_{\mathbb{R}^N} \left( \nabla u_n\nabla v
				+ | \nabla u_n|^{q-2} \nabla u_n\nabla v
				+ \lambda_{\varepsilon}u_n v\right)dx\\
				&-\int_{\mathbb{R}^N}g^{\varepsilon}(u_n)v
				dx.\\
			\end{aligned}	
		\end{equation}
	 Up to a subsequence, we may assume that
	${u}_n \rightharpoonup {u}_{\varepsilon}$ in $X$.
	Therefore, for any $v \in {X}$,
		\begin{equation}\label{Eq-aePhi2}
			\left\langle J_{\varepsilon}^{\prime}\left(u_{\varepsilon}\right),
			v\right\rangle
			+\lambda_{\varepsilon}\int_{\mathbb{R}^N}u_{\varepsilon}vdx
			=\lim_{n \rightarrow \infty}\left( \left\langle J_{\varepsilon}^{\prime}\left(u_n\right),
			v\right\rangle
			+\lambda_{\varepsilon}\int_{\mathbb{R}^N}u_nvdx \right)
			=0.
		\end{equation}
		Now we use a technique due to Boccardo and Murat \cite{Boccardo1992}. Fix $k\in \mathbb{R}^{+}$, define the function
		$$
		\tau_k(s)= \begin{cases}s & \text { if }|s| \leq k, \\ k s /|s| & \text { if }|s|>k.\end{cases}
		$$
		It's easy to see that $(\tau_k\left(u_n-u_{\varepsilon}\right))_{n} $ is bounded in $X$. Fix a function $\psi \in C_0^{\infty}\left(\mathbb{R}^N\right)$ with $0 \leq \psi \leq 1$ in $\mathbb{R}^N$, $\psi(x)=1$ for $x \in B_{1}(0)$ and $\psi(x)=0$ for $x \in \mathbb{R}^N \backslash B_2(0)$.\
		Now, take $R>0$ and define $\psi_R(x)=\psi(x / R)$ for $x \in \mathbb{R}^N$.
		We obtain from
		\eqref{Eq-aePhi1} and \eqref{Eq-aePhi2} that
		\begin{equation}\label{Eq-aePhi3}
			\begin{aligned}
				o_n(1)=&\left\langle J_{\varepsilon}^{\prime}\left(u_n\right), \tau_k(u_n-u_{\varepsilon})\psi_R\right\rangle
				+\lambda_{\varepsilon}\int_{\mathbb{R}^N}u_n\tau_k(u_n-u_{\varepsilon})\psi_Rdx\\
				=& \left\langle J_{\varepsilon}^{\prime}\left(u_n\right)-J_{\varepsilon}^{\prime}(u_{\varepsilon}), \tau_k(u_n-u_{\varepsilon})\psi_R\right\rangle
				+\lambda_{\varepsilon}\int_{\mathbb{R}^N} \left( u_n- u_{\varepsilon} \right) \tau_k(u_n-u_{\varepsilon})\psi_Rdx \\
				=&\int_{\mathbb{R}^N} \left(\left|\nabla u_n\right|^{q-2} \nabla u_n-|\nabla u_{\varepsilon}|^{q-2}\nabla u_{\varepsilon} \right)\nabla \left(\tau_k(u_n-u_{\varepsilon})\psi_R\right)dx\\
				&+\int_{\mathbb{R}^N} \left(\nabla u_n-\nabla u_{\varepsilon} \right)\nabla \left(\tau_k(u_n-u_{\varepsilon})\psi_R\right)dx\\
				&+\lambda_{\varepsilon}\int_{\mathbb{R}^N} \left( u_n- u_{\varepsilon} \right) \tau_k(u_n-u_{\varepsilon})\psi_Rdx\\
				&-\int_{\mathbb{R}^N} \left(g^{\varepsilon}(u_n)-g^{\varepsilon}(u_{\varepsilon}) \right) \tau_k(u_n-u_{\varepsilon})\psi_Rdx\\
			\end{aligned}
		\end{equation}
	and
		\begin{equation}\label{Eq-aePhi3en}
		\begin{aligned}
			o_n(1)=&\left\langle J_{\varepsilon}^{\prime}\left(u_n\right), (u_n-u_{\varepsilon})\psi_R\right\rangle
			+\lambda_{\varepsilon}\int_{\mathbb{R}^N}u_n(u_n-u_{\varepsilon})\psi_Rdx\\
			=& \left\langle J_{\varepsilon}^{\prime}\left(u_n\right)-J_{\varepsilon}^{\prime}(u_{\varepsilon}), (u_n-u_{\varepsilon})\psi_R\right\rangle
			+\lambda_{\varepsilon}\int_{\mathbb{R}^N} \left( u_n- u_{\varepsilon} \right) (u_n-u_{\varepsilon})\psi_Rdx \\
			=&\int_{\mathbb{R}^N} \left(\left|\nabla u_n\right|^{q-2} \nabla u_n-|\nabla u_{\varepsilon}|^{q-2}\nabla u_{\varepsilon} \right)\nabla \left((u_n-u_{\varepsilon})\psi_R\right)dx\\
			&+\int_{\mathbb{R}^N} \left(\nabla u_n-\nabla u_{\varepsilon} \right)\nabla \left((u_n-u_{\varepsilon})\psi_R\right)dx\\
			&+\lambda_{\varepsilon}\int_{\mathbb{R}^N} \left( u_n- u_{\varepsilon} \right)^2\psi_Rdx\\
			&-\int_{\mathbb{R}^N} \left(g^{\varepsilon}(u_n)-g^{\varepsilon}(u_{\varepsilon}) \right) (u_n-u_{\varepsilon})\psi_Rdx.\\
		\end{aligned}
	\end{equation}
		Since  $ ({u}_n)_n $  is bounded in $X$, up to a subsequence, we have
		\begin{equation}\label{Eq-aePhi4}
			\begin{aligned}
				\int_{\mathbb{R}^N} \left( u_n- u_{\varepsilon} \right) \tau_k(u_n-u_{\varepsilon})\psi_Rdx=o_n(1) \quad \text{and} \quad		\int_{\mathbb{R}^N} \left( u_n- u_{\varepsilon} \right)^2\psi_Rdx=o_n(1)
			\end{aligned}.
		\end{equation}
			From $(g_1)$ and $(g_3)$, when $N\geq3$, there exists $C>0$ such that
		\begin{equation}\label{eq-g1}
			|g^{\varepsilon}(s)| \leq C_1\left(|s|+|s|^{{q}^{\prime}-1}\right) \text{for all } s \in \mathbb{R}.
		\end{equation}
	Therefore, from \eqref{eq-g1} and Lemma \ref{LemmaEmbed}, we have
	\begin{equation}\label{eq-g2}
		\int_{\mathbb{R}^N}|g^{\varepsilon}(u_n)u_n|dx\leq C_1\int_{\mathbb{R}^N}\left(|u_n|^2+|u_n|^{{q}^{\prime}}\right)dx\leq C^{\prime}_1.
	\end{equation}
	When $N=2$,  there exists $C_2>0$ such that, for all $\alpha>0$
	\begin{equation}\label{eqq-g2}
	|g^{\varepsilon}(s)|\leq C_2\left(|s|+|s|\left(e^{\alpha s^2}-1\right)\right) \text { for all } s \in \mathbb{R}.
	\end{equation}
From $(g_1)$ and $(g_3)$, for any $\delta > 0$, there exists a constant $C_\delta > 0$ such that for every $s \in \mathbb{R}$,
\[
G_{+}(s) \leq \delta |s|^2 + C_\delta |s|^{\bar{q}}.
\]
Therefore, there exists $C_0 > 0$ such that $\|\nabla u_n\|_2 > C_0$. Otherwise, from \eqref{Eq-GN inequality1}, up to a subsequence, we have
\[
\lim_{n \to +\infty} \int_{\mathbb{R}^N} G_{+}(u_n) \, dx = 0.
\]
{This implies that $\lim_{n \rightarrow +\infty}J_{\varepsilon}(u_n)\geq0$, which contradicts $m_\varepsilon(c) < 0$}. Hence, there exists a constant $M > 0$ such that
\[
\left\|\frac{u_n}{\|\nabla u_n\|_2}\right\|_2 = \frac{\|u_n\|_2}{\|\nabla u_n\|_2} \leq M.
\]
If $  \|\nabla u_n\|_2 < \sqrt{2\pi / \alpha}  $,
	using Lemma \ref{LemmaTrudinger-Moser}, we have
	\begin{equation}\label{eqq-gc}
		\begin{aligned}
			\int_{\mathbb{R}^2}\left(e^{2 \alpha u_n^2}-1\right) {d} x & =\int_{\mathbb{R}^2}\left(e^{2 \alpha\|\nabla u_n\|_2^2\left(u_n /\|\nabla u_n\|_2\right)^2}-1\right) {d} x
			\leq C_\alpha .
		\end{aligned}
	\end{equation}
Since $ (u_n)_n $ is bounded in $X$, we may choose
 $\alpha>0$ small enough such that $\|u_n\|_X\leq\sqrt{\pi / \alpha}$. By \eqref{eqq-g2}, \eqref{eqq-gc} and Lemma \ref{LemmaEmbed},
 	\begin{equation}\label{eqq-g22}
 	\begin{aligned}
 	\int_{\mathbb{R}^2}|g^{\varepsilon}(u_n)u_n|dx&\leq C_2\int_{\mathbb{R}^2}\left(|u_n|^2+|u_n|^2\left(e^{\alpha |u_n|^2}-1\right)\right)dx\\
 	&\leq C_2\int_{\mathbb{R}^2}|u_n|^2dx+C_2\left(\int_{\mathbb{R}^2}\left(e^{2\alpha |u_n|^2}-1\right)dx\right)^{1/2}\left(\int_{\mathbb{R}^2}|u_n|^4dx\right)^{1/2}\\
 	&\leq C_2\int_{\mathbb{R}^2}|u_n|^2dx+C_2C^{1/2}_\alpha\left(\int_{\mathbb{R}^2}|u_n|^4dx\right)^{1/2}\\
 	&\leq C_2^{\prime}
 	.
 	\end{aligned}	
 	\end{equation}
Combining \eqref{eq-g2} and \eqref{eqq-g22}, we can deduce that $\int_{\mathbb{R}^N}|g^{\varepsilon}(u_n)u_n|dx\leq \max\{C^{\prime}_1,C^{\prime}_2\}$. Similarly, we can prove, there exists $C^{\prime}>0$ such that
{$$
\int_{\mathbb{R}^N}|g^{\varepsilon}(u_{\varepsilon})u_{\varepsilon}|dx\leq C^{\prime},\,
\int_{\mathbb{R}^N}|g^{\varepsilon}(u_{\varepsilon})u_n|dx\leq C^{\prime} \quad \text{ and } \quad
\int_{\mathbb{R}^N}|g^{\varepsilon}(u_n)u_{\varepsilon}|dx\leq C^{\prime}.
$$}
Hence, there exists $C>0$ such that
		\begin{equation}\label{Eq-aePhi5}
			\begin{aligned}
				&\quad\int_{\mathbb{R}^N} |\left(g^{\varepsilon}(u_n)- g^{\varepsilon}(u_{\varepsilon})\right) (u_n-u_{\varepsilon})\psi_R|dx\\
				&\leq\int_{\mathbb{R}^N} |\left(g^{\varepsilon}(u_n)- g^{\varepsilon}(u_{\varepsilon})\right) (u_n-u_{\varepsilon})|dx\\
				&\leq\int_{\mathbb{R}^N} |g^{\varepsilon}(u_{\varepsilon})u_{\varepsilon}| +|g^{\varepsilon}(u_n)u_n|+|g^{\varepsilon}(u_n)u_{\varepsilon}|+|g^{\varepsilon}(u_{\varepsilon})u_n|dx\\
				&\leq C.	
			\end{aligned}
		\end{equation}
	From \eqref{eq-g1} and \eqref{eqq-g2}, there exists a constant $C$ large enough such that
		{\begin{equation}\label{Eq-aePhi5ekn}
		\begin{aligned}
			&\quad\int_{\mathbb{R}^N} |\left(g^{\varepsilon}(u_n)- g^{\varepsilon}(u_{\varepsilon})\right) \tau_k(u_n-u_{\varepsilon})\psi_R|dx\\
			&\leq k	\int_{\mathbb{R}^N} |\left(g^{\varepsilon}(u_n)- g^{\varepsilon}(u_{\varepsilon})\right) \psi_R|dx\\
			&\leq k	\int_{B_{2R}(0)} |g^{\varepsilon}(u_n)- g^{\varepsilon}(u_{\varepsilon})|dx\\
			&\leq Ck.	
		\end{aligned}
	\end{equation}}
	Therefore,
		let
			\begin{equation*}
			\begin{aligned}
				e_n(x)=& \left(\left|\nabla u_n\right|^{q-2} \nabla u_n-|\nabla u_{\varepsilon}|^{q-2}\nabla u_{\varepsilon} \right)\nabla \left((u_n-u_{\varepsilon})\psi_R\right)\\&+
				\left(\nabla u_n-\nabla u_{\varepsilon} \right)\nabla \left((u_n-u_{\varepsilon})\psi_R\right)
			\end{aligned}
		\end{equation*}
	and
		\begin{equation*}
			\begin{aligned}
				e_{k,n}(x)=& \left(\left|\nabla u_n\right|^{q-2} \nabla u_n-|\nabla u_{\varepsilon}|^{q-2}\nabla u_{\varepsilon} \right)\nabla \left(\tau_k(u_n-u_{\varepsilon})\psi_R\right)\\&+
				\left(\nabla u_n-\nabla u_{\varepsilon} \right)\nabla \left(\tau_k(u_n-u_{\varepsilon})\psi_R\right).
			\end{aligned}
		\end{equation*}
	First, we give some estimates for
	$$
	\int_{\mathbb{R}^N} e_n(x) d x \quad \text { and } \quad \int_{\mathbb{R}^N} e_{k, n}(x) d x .
	$$
	From \eqref{Eq-aePhi3en} and \eqref{Eq-aePhi5}, we have
	\begin{equation}\label{enR}
		\int_{\mathbb{R}^N}e_{n}(x)dx\leq C+o_n(1).
	\end{equation}
	And from \eqref{Eq-aePhi3} and \eqref{Eq-aePhi5ekn}, we have
	\begin{equation}\label{eknR}
		\int_{\mathbb{R}^N}e_{k,n}(x)dx\leq Ck+o_n(1).
	\end{equation}
   Next, we give some estimates for
    $$
   \int_{B_{2 R}(0) \backslash B_R(0)} e_n(x) d x \quad \text { and } \quad \int_{B_{2 R}(0) \backslash B_R(0)} e_{k, n}(x) d x.
   $$
	{We may assume that there exist $C_R>0$ such that $|\nabla\psi_R|<C_R$. Then
	\begin{equation*}
		\begin{aligned}
			&\quad\left|\int_{B_{2R}(0) \backslash B_R(0)}\left|\nabla u_n\right|^{q-2} \nabla u_n\left((u_n-u_{\varepsilon})\nabla\psi_R\right)dx\right|\\
			&\leq
			\int_{B_{2R}(0)\backslash B_R(0)}\left|\nabla u_n\right|^{q-1} |u_n-u_{\varepsilon}||\nabla\psi_R|dx\\
			&\leq C_R\int_{B_{2R}(0) \backslash B_R(0)}\left|\nabla u_n\right|^{q-1}\left|u_n-u_{\varepsilon}\right|dx\\
			&\leq C_R \left(\int_{B_{2R}(0) \backslash B_R(0)}\left|\nabla u_n\right|^{q}dx \right)^{\frac{q-1}{q}} \left(  \int_{B_{2R}(0) \backslash B_R(0)}\left|u_n-u_{\varepsilon}\right|^{q}dx\right)^{\frac{1}{q}} \\
			&=o_n(1).
		\end{aligned}
	\end{equation*}
	Similarly, we can prove
	$$
	\left|\int_{B_{2R}(0) \backslash B_R(0)}          \left|\nabla u_{\varepsilon}\right|^{q-2}    \nabla u_{\varepsilon}\left((u_n-u_{\varepsilon})\nabla\psi_R\right)dx\right|=o_n(1),
	$$
	$$
	\left|\int_{B_{2R}(0) \backslash B_R(0)}          \nabla u_n\left((u_n-u_{\varepsilon})\nabla\psi_R\right)dx\right|=o_n(1),
	$$
	$$
	\left|\int_{B_{2R}(0) \backslash B_R(0)}          \nabla u_{\varepsilon}\left((u_n-u_{\varepsilon})\nabla\psi_R\right)dx\right|=o_n(1).
	$$
	Therefore
			\begin{equation}\label{Eq-eue}
			\begin{aligned}
			\int_{B_{2R}(0) \backslash B_R(0)}e_{n}(x)dx
			=&\int_{B_{2R}(0) \backslash B_R(0)} \left(\left|\nabla u_n\right|^{q-2} \nabla u_n-|\nabla u_{\varepsilon}|^{q-2}\nabla u_{\varepsilon} \right)\nabla \left((u_n-u_{\varepsilon})\psi_R\right)dx\\
			&+\int_{B_{2R}(0) \backslash B_R(0)}\left(\nabla u_n-\nabla u_{\varepsilon} \right)\nabla \left((u_n-u_{\varepsilon})\psi_R\right)dx\\
			=&\int_{B_{2R}(0) \backslash B_R(0)} \left(\left|\nabla u_n\right|^{q-2} \nabla u_n-|\nabla u_{\varepsilon}|^{q-2}\nabla u_{\varepsilon} \right)(\nabla u_n-\nabla u_{\varepsilon})\psi_Rdx\\
			&+\int_{B_{2R}(0) \backslash B_R(0)}\left(\nabla u_n-\nabla u_{\varepsilon} \right) (\nabla u_n-\nabla u_{\varepsilon})\psi_Rdx\\
			&+\int_{B_{2R}(0) \backslash B_R(0)} \left(\left|\nabla u_n\right|^{q-2} \nabla u_n-|\nabla u_{\varepsilon}|^{q-2}\nabla u_{\varepsilon} \right) \left((u_n-u_{\varepsilon})\nabla\psi_R\right)dx\\
			&+\int_{B_{2R}(0) \backslash B_R(0)}\left(\nabla u_n-\nabla u_{\varepsilon} \right) \left((u_n-u_{\varepsilon})\nabla\psi_R\right)dx\\
			\geq&
			\int_{B_{2R}(0) \backslash B_R(0)} \left(\left|\nabla u_n\right|^{q-2} \nabla u_n-|\nabla u_{\varepsilon}|^{q-2}\nabla u_{\varepsilon} \right) \left((u_n-u_{\varepsilon})\nabla\psi_R\right)dx\\
			&+\int_{B_{2R}(0) \backslash B_R(0)}\left(\nabla u_n-\nabla u_{\varepsilon} \right) \left((u_n-u_{\varepsilon})\nabla\psi_R\right)dx\\
			=&o_n(1).
			\end{aligned}
		\end{equation}}
	Hence
	\begin{equation}\label{enB21}
	\int_{B_{2R}(0) \backslash B_R(0)}e_{n}(x)dx\geq o_n(1).
	\end{equation}	
	Using the same proof, we obtain
	\begin{equation}\label{eknB21}
	\int_{B_{2R}(0) \backslash B_R(0)}e_{k,n}(x)dx\geq o_n(1).
	\end{equation}	
		{Finally, we give some estimates for
		$$
		\int_{B_{R}(0) } e_n(x) d x \quad \text { and } \quad \int_{B_{R}(0)} e_{k, n}(x) d x.
		$$
		Combining \eqref{enR} and \eqref{enB21}, we obtain that
		\begin{equation}\label{Eq-aePhi6en}
			\begin{aligned}
				\int_{B_R(0)}e_{n}(x)dx
				=&\int_{\mathbb{R}^N}e_{n}(x)dx-\int_{B_{2R}(0) \backslash B_R(0)}e_{n}(x)dx\\
				\leq& C+o_n(1).
			\end{aligned}
		\end{equation}
		Combining \eqref{eknR} and \eqref{eknB21}, we obtain that
		\begin{equation}\label{Eq-aePhi6}
		\begin{aligned}
		\int_{B_R(0)}e_{k,n}(x)dx
		=&\int_{\mathbb{R}^N}e_{k,n}(x)dx-\int_{B_{2R}(0) \backslash B_R(0)}e_{k,n}(x)dx\\
		\leq& Ck+o_n(1).
		\end{aligned}
		\end{equation}}
		Take $0<\theta<1$ and split $B_R(0)$ into
		$$
		S_n^k=\left\{x \in B_R(0)	|\enspace| u_n-u_{\varepsilon} | \leq k\right\}, \quad G_n^k=\left\{x \in B_R(0)| \enspace| u_n-u_{\varepsilon} |>k\right\} .
		$$
			By Lemma \ref{Remarkpost}, $e_n(x) \geq 0$ and $e_{k,n}(x) \geq 0$ in $B_R(0)$, therefore
		\begin{equation}
			\begin{aligned}
				\int_{B_R(0)} e_n^\theta d x & =
				\int_{S_n^k} e_n^\theta d x+\int_{G_n^k} e_n^\theta d x \\
				& \leq\left(\int_{S_n^k} e_n d x\right)^\theta\left|S_n^k\right|^{1-\theta}+\left(\int_{G_n^k} e_n d x\right)^\theta\left|G_n^k\right|^{1-\theta}\\
				& =\left(\int_{S_n^k} e_{k,n} d x\right)^\theta\left|S_n^k\right|^{1-\theta}+\left(\int_{G_n^k} e_n d x\right)^\theta\left|G_n^k\right|^{1-\theta}.
			\end{aligned}
		\end{equation}
		For fixed $k\in \mathbb{R}^{+}$, $\left|G_n^k\right| \rightarrow 0$ as $n \rightarrow \infty$, and from \eqref{Eq-aePhi6en} and \eqref{Eq-aePhi6}, we get
		\begin{equation}
			\begin{aligned}
				\int_{B_R(0)} e_n^\theta d x
				& \leq\left(\int_{S_n^k} e_{k,n} d x\right)^\theta\left|S_n^k\right|^{1-\theta}+\left(\int_{G_n^k} e_n d x\right)^\theta\left|G_n^k\right|^{1-\theta}\\
				& \leq\left(\int_{S_n^k} e_{k,n} d x\right)^\theta\left|S_n^k\right|^{1-\theta}+o_n(1)\\
				&  \leq(C k)^\theta|B_R(0)|^{1-\theta}+o_n(1).
			\end{aligned}
		\end{equation}
		Let $k\rightarrow 0$, we get that $e_n^\theta \rightarrow 0$ in $L^1(B_R(0))$ as $n\rightarrow \infty$. By Lemma \ref{LemmaConverae}, we have
		$$
		\nabla u_n \rightarrow \nabla u_{\varepsilon} \text { a.e. } \text{ in } B_R(0).
		$$
		Since $R$ is arbitrary,
		by passing to a subsequence, we have
		$$
		\nabla u_n \rightarrow \nabla u_{\varepsilon} \text { a.e. } \text{ in } \mathbb{R}^N.
		$$
	\end{proof}
	\begin{Lemma}\label{LemmaStrongconver}
		If $g$ satisfy $(g_0)-(g_4)$, then, for every $\beta>0$, there exists $\bar{c}\geq0$ such that $m_{\varepsilon}(c)<-\beta$ for every $c>\bar{c}$ and {$\varepsilon>0$}. If $0<c_n \rightarrow c$ and $\tilde{u}_n \in \mathcal{D}\left(c_n\right)$ {is} such that $J_{\varepsilon}\left(\tilde{u}_n\right) \rightarrow m_{\varepsilon}(c)$, then there exists  another minimizing sequence ${(u_n)_n \subset} \mathcal{D}(c)$ such that $\left\|u_n-\tilde{u}_n\right\|_X \rightarrow 0$ and up to translations, $u_n \rightarrow u$ in $L^m\left(\mathbb{R}^N\right)$ for {all $m$, with} $2<m<q^{\prime}$.
	\end{Lemma}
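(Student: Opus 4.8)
The first assertion is precisely Lemma~\ref{Lemmam(c)<0}, so it remains to prove the compactness of minimizing sequences; I would do this by a concentration--compactness argument, in four steps. \emph{Step 1 (a good minimizing sequence).} Since $\|\tilde u_n\|_2\le c_n$ is bounded and $J_\varepsilon(\tilde u_n)\to m_\varepsilon(c)$, the coercivity estimates \eqref{Eq-GN1}--\eqref{Eq-GN2} (with \eqref{eq-G}) show that $(\tilde u_n)_n$ is bounded in $X$. Since $c_n\to c$, the rescaled functions $\hat u_n:=(c/c_n)\tilde u_n$ lie in $\mathcal{D}(c)$, satisfy $\|\hat u_n-\tilde u_n\|_X\to 0$, and, by continuity of $J_\varepsilon$ on bounded subsets of $X$, fulfil $J_\varepsilon(\hat u_n)\to m_\varepsilon(c)$; thus $(\hat u_n)_n$ is a minimizing sequence for $J_\varepsilon$ on $\mathcal{D}(c)$. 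Applying Lemma~\ref{LemmaPSseq} to $(\hat u_n)_n$ we obtain a bounded minimizing sequence $(u_n)_n\subset\mathcal{D}(c)$ with $\|u_n-\hat u_n\|_X\to 0$ (hence $\|u_n-\tilde u_n\|_X\to 0$), a number $\lambda_\varepsilon\in\mathbb{R}$, and $J_\varepsilon'(u_n)\varphi+\lambda_\varepsilon\int_{\mathbb{R}^N}u_n\varphi\,dx\to 0$ for all $\varphi\in X$; up to a subsequence, $u_n\rightharpoonup u$ in $X$ with $u\in\mathcal{D}(c)$.

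\emph{Step 2 (non-vanishing and translation).} Suppose $\sup_{y\in\mathbb{R}^N}\int_{B_r(y)}|u_n|^2\,dx\to 0$ for some $r>0$. Then Lemma~\ref{LemmaLions} gives $\|u_n\|_m\to 0$ for all $m\in(2,q')$, and in particular $\|u_n\|_{\bar{q}}\to 0$ because $\bar{q}=(1+\tfrac{2}{N})\max\{2,q\}\in(2,q')$. Given $\delta>0$, $(g_1)$ yields $\rho\in(0,1)$ with $G_+(s)\le\delta|s|^2$ for $|s|\le\rho$, while $(g_3)$ and the continuity of $G_+$ yield $C_\delta>0$ with $G_+(s)\le C_\delta|s|^{\bar{q}}$ for $|s|\ge\rho$; hence $\int_{\mathbb{R}^N}G_+(u_n)\,dx\le\delta c^2+C_\delta\|u_n\|_{\bar{q}}^{\bar{q}}$, so $\limsup_n\int_{\mathbb{R}^N}G_+(u_n)\,dx\le\delta c^2$ and, letting $\delta\to 0$, $\int_{\mathbb{R}^N}G_+(u_n)\,dx\to 0$. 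Since $G_-^\varepsilon\ge 0$, this forces $\liminf_n J_\varepsilon(u_n)\ge-\lim_n\int_{\mathbb{R}^N}G_+(u_n)\,dx=0$, contradicting $J_\varepsilon(u_n)\to m_\varepsilon(c)<0$. Therefore there exist $\delta_0>0$ and $(y_n)_n\subset\mathbb{R}^N$ with $\int_{B_r(y_n)}|u_n|^2\,dx\ge\delta_0$. Put $w_n:=u_n(\cdot+y_n)$; by translation invariance $(w_n)_n\subset\mathcal{D}(c)$ is again a bounded minimizing sequence obeying the same Palais--Smale relation, its weak limit $w$ (along a subsequence) satisfies $w\ne 0$ thanks to the compact embedding $X\hookrightarrow L^2_{\mathrm{loc}}(\mathbb{R}^N)$, and, running the Boccardo--Murat argument of Lemma~\ref{Lem3-ae} on $(w_n)_n$, we may also assume $\nabla w_n\to\nabla w$ a.e.\ in $\mathbb{R}^N$.

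\emph{Step 3 (Brezis--Lieb splitting and exclusion of dichotomy).} Set $v_n:=w_n-w\rightharpoonup 0$ in $X$. From $\nabla w_n\to\nabla w$ a.e.\ and the boundedness of $(\nabla w_n)_n$ in $L^2\cap L^q$, the Brezis--Lieb lemma gives $\|\nabla w_n\|_2^2=\|\nabla w\|_2^2+\|\nabla v_n\|_2^2+o(1)$ and $\|\nabla w_n\|_q^q=\|\nabla w\|_q^q+\|\nabla v_n\|_q^q+o(1)$, while from $w_n\to w$ a.e.\ together with $(g_1)$--$(g_3)$ (for $N=2$, a Trudinger--Moser estimate on normalized functions as in the proof of Lemma~\ref{Lem3-ae}) one obtains $\int_{\mathbb{R}^N}G^\varepsilon(w_n)\,dx=\int_{\mathbb{R}^N}G^\varepsilon(w)\,dx+\int_{\mathbb{R}^N}G^\varepsilon(v_n)\,dx+o(1)$; altogether $J_\varepsilon(w_n)=J_\varepsilon(w)+J_\varepsilon(v_n)+o(1)$. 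Writing $c':=\|w\|_2\in(0,c]$ and, along a subsequence, $\|v_n\|_2^2\to d^2:=\lim_n\|w_n\|_2^2-c'^2\ge 0$, and using $w\in\mathcal{S}(c')$ and $v_n\in\mathcal{D}(\|v_n\|_2)$, we chain
$$
m_\varepsilon(c)=J_\varepsilon(w)+\lim_n J_\varepsilon(v_n)\ge m_\varepsilon(c')+m_\varepsilon(d)\ge m_\varepsilon\!\big(\sqrt{c'^2+d^2}\,\big)\ge m_\varepsilon(c),
$$
where the first inequality uses the continuity of $\sigma\mapsto m_\varepsilon(\sigma)$ — itself a consequence of $m_\varepsilon(\sqrt s\,\sigma)\le s\,m_\varepsilon(\sigma)$ for $s\ge 1$ and the coercivity of $\left.J_\varepsilon\right|_{\mathcal{D}(\sigma)}$ — the second uses Lemma~\ref{LemmaSubadd}, and the third uses $\sqrt{c'^2+d^2}\le c$ with the monotonicity of $m_\varepsilon$. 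Hence all these are equalities; in particular $J_\varepsilon(w)=m_\varepsilon(c')$, so if $d>0$ the relaxed strict subadditivity (Lemma~\ref{LemmaStrictSubadd} and Remark~\ref{reJ}(b)), applied with the nonzero minimizer $w$ at level $c'$, forces $m_\varepsilon(\sqrt{c'^2+d^2})<m_\varepsilon(c')+m_\varepsilon(d)$, a contradiction. Thus $d=0$, i.e.\ $\|w_n-w\|_2\to 0$; since $v_n\rightharpoonup 0$ is bounded in $L^{q'}(\mathbb{R}^N)$ by Lemma~\ref{LemmaEmbed}, interpolation yields $\|v_n\|_m\to 0$ for all $m\in(2,q')$, i.e.\ $u_n(\cdot+y_n)\to u:=w$ in $L^m(\mathbb{R}^N)$ for every $m\in(2,q')$, which (together with $\|u_n-\tilde u_n\|_X\to 0$ from Step 1) is the assertion.

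\emph{Main obstacle.} The hard part will be the energy splitting in Step 3, and within it the $q$-Laplacian term: because $X$ is not a Hilbert space, weak convergence of $w_n$ does not by itself split $\|\nabla w_n\|_q^q$, and the a.e.\ convergence $\nabla w_n\to\nabla w$ coming from the Boccardo--Murat argument of Lemma~\ref{Lem3-ae} is exactly what makes the classical Brezis--Lieb lemma applicable. The splitting of the nonlinear term additionally requires handling the (nearly) critical growth allowed by $(g_2)$, and in dimension $N=2$ the exponential growth, via the Trudinger--Moser inequality (Lemma~\ref{LemmaTrudinger-Moser}) on normalized functions, just as in the proof of Lemma~\ref{Lem3-ae}.
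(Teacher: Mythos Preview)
Your proof is correct and follows the same concentration--compactness scheme as the paper (Ekeland/PS sequence, non-vanishing via Lions, Br\'ezis--Lieb splitting, exclusion of dichotomy through the strict subadditivity of $m_\varepsilon$). There are, however, two genuine differences worth recording. First, you handle the hypothesis $c_n\to c$ explicitly by rescaling $\hat u_n:=(c/c_n)\tilde u_n$, whereas the paper's proof silently takes $\tilde u_n\in\mathcal D(c)$; your reduction is the correct way to pass to a minimizing sequence in $\mathcal D(c)$. Second, and more substantively, the paper excludes dichotomy by a \emph{two-profile} argument: after extracting the first weak limit $u_c$, it assumes the remainder $v_n$ does \emph{not} vanish in the Lions sense, extracts a second nonzero profile $v_c$, and then reaches a contradiction via strict subadditivity in the chain $m_\varepsilon(c)\ge m_\varepsilon(\beta)+m_\varepsilon(\gamma)+m_\varepsilon(\delta)$ (with a rescaling $\tilde w_n:=(\delta/\|w_n\|_2)w_n$ when $\delta>0$). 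Your single-profile argument---showing directly that $d:=\lim_n\|v_n\|_2=0$ from the chain $m_\varepsilon(c)\ge m_\varepsilon(c')+m_\varepsilon(d)\ge m_\varepsilon(\sqrt{c'^2+d^2})\ge m_\varepsilon(c)$ together with Remark~\ref{reJ}(b) applied to the nonzero minimizer $w$---is shorter and avoids the second extraction; the price is that you must justify $\liminf_n J_\varepsilon(v_n)\ge m_\varepsilon(d)$, which you do via continuity of $\sigma\mapsto m_\varepsilon(\sigma)$, and the cleanest way to make this precise is exactly the paper's rescaling trick $\tilde v_n:=(d/\|v_n\|_2)v_n\in\mathcal S(d)$ when $d>0$. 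Finally, you are right to flag the $q$-Laplacian term as the delicate point: the splitting $\|\nabla w_n\|_q^q=\|\nabla w\|_q^q+\|\nabla v_n\|_q^q+o(1)$ requires the a.e.\ convergence of gradients supplied by Lemma~\ref{Lem3-ae}; the paper announces this identity but does not detail its proof, so your explicit appeal to Lemma~\ref{Lem3-ae} plus Br\'ezis--Lieb is in fact more complete.
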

	\begin{proof}
		Let $\bar{c}$ be {the number}  determined by Lemma \ref{Lemmam(c)<0} and ${(\tilde{u}_n)_n} \subset \mathcal{D}(c)$ be a minimizing sequence of $m_{\varepsilon}(c)$.
{Hence, Lemma~\ref{LemmaPSseq}
gives that} $m_{\varepsilon}(c)$ possesses another minimizing sequence ${(u_n)_n} \subset \mathcal{D}(c)$ and $\lambda_{\varepsilon} \in \mathbb{R}$ such that
{for all $\varphi \in X^{*}$}
		$$
		\left\|u_n-\tilde{u}_n\right\|_X \rightarrow 0, \quad J_{\varepsilon}^{\prime}\left(u_n\right) \varphi+\lambda_{\varepsilon}\int_{\mathbb{R}^N} u_n \varphi d x \rightarrow 0  \text { as } n \rightarrow\infty ,
		$$
and {$(u_n)_n$} is also a Palais-Smale sequence for $J_{\varepsilon}$ on $\mathcal{D}(c)$.
		Similarly to the proof of Lemma \ref{LemmaBoundedbelow}, we have that {$(u_n)_n$} is bounded in $X$. If
		$$
		\lim _{n \rightarrow \infty} \sup _{y \in \mathbb{R}^N} \int_{B_R(y)}\left|u_n(x)\right|^2 d x=0
		$$
		for any $R>0$, due to Lemma \ref{LemmaLions},  $\left\|u_n\right\|_m \rightarrow 0$ for any
{$m$, with}
$2<m<q^{\prime}$. From $(g_1)$ and $(g_3)$, for any $\delta>0$, there exists $C_\delta>0$ such that for every $s \in \mathbb{R}$,
		$$
		G_{+}(s) \leq \delta|s|^2+C_\delta|s|^{\bar{q}}.
		$$
{Thus,} $\lim \limits_{n \rightarrow \infty} \int_{\mathbb{R}^N} G_{+}\left(u_n\right) d x=0$ and $m_{\varepsilon}(c)=\lim \limits_{n \rightarrow\infty} J_{\varepsilon}\left(u_n\right) \geq 0$, which  { contradicts} Lemma~\ref{Lemmam(c)<0}.
		Hence, there exist   $\varepsilon_0>0$ and a sequence $(y_n)_n \subset \mathbb{R}^N$ such that, {for sufficiently large $R>0$}
		$$
		\int_{B_R\left(y_n\right)}\left|u_n(x)\right|^2 d x \geq \varepsilon_0>0.
		$$
		 Moreover, we have $u_n\left(x+y_n\right) \rightharpoonup u_c \not \equiv 0$ in $X$ for some {$u_c \in X$}.

{Put} $v_n(x):=u_n\left(x+y_n\right)-u_c(x)$.
{Then,} $v_n \rightharpoonup 0$ in $X$,  and  $u_n\left(x+y_n\right)\rightarrow u_c$ for a.e. $x\in \mathbb{R}^N$ by Lemma~\ref{LemmaEmbed}. Therefore, we obtain
	\begin{equation}
		\begin{aligned}
			& \left\|\nabla u_n\right\|_2^2=\left\|\nabla u_n\left(\cdot+y_n\right)\right\|_2^2=\left\|\nabla v_n\right\|_2^2+\left\|\nabla u_c\right\|_2^2+o_n(1).
		\end{aligned}
	\end{equation}
	 Moreover, from the Br{\'e}zis-Lieb lemma \cite[Lemma 3.2]{Jeanjean2019Nonradial}, we have
	 \begin{equation}
	 	\begin{aligned}
	 		& \left\|u_n\right\|_2^2=\left\|u_n\left(\cdot+y_n\right)\right\|_2^2=\left\|v_n\right\|_2^2+\left\|u_c\right\|_2^2+o_n(1), \\
	 		\int_{\mathbb{R}^N}g^{\varepsilon}\left(u_n\right)dx=&	\int_{\mathbb{R}^N}g^{\varepsilon}\left(u_n\left(\cdot+y_n\right)\right)dx=\int_{\mathbb{R}^N}g^{\varepsilon}\left(v_n\right)dx+\int_{\mathbb{R}^N}g^{\varepsilon}(u_c)dx+o_n(1) .
	 	\end{aligned}
	 \end{equation}
	{Now, we prove that}
	\begin{equation}\label{BL1}
		\left\|\nabla u_n\right\|_q^q=\left\|\nabla u_n\left(\cdot+y_n\right)\right\|_q^q=\left\|\nabla v_n\right\|_q^q+\left\|\nabla u_c\right\|_q^q+o_n(1) .
	\end{equation}
%%%%%%%
		We next claim that
		$$
		\lim_{n \rightarrow\infty} \sup _{y \in \mathbb{R}^N} \int_{B_1(y)}\left|v_n\right|^2 d x=0,
		$$
		which, {by Lemma \ref{LemmaLions}, will yield the statement of Lemma \ref{LemmaConverae}}. If this is not true, then, as before, there exist $z_n \in \mathbb{R}^N$ and $v_c \in X \backslash\{0\}$ such that, denoting $w_n(x):=v_n\left(x-z_n\right)-v_c(x)$, we have $w_n \rightharpoonup 0$ in $X, w_n \rightarrow 0$ for a.e. $x\in \mathbb{R}^N$, and
		$$
		\lim_{n \rightarrow\infty}
\left(J_{\varepsilon}\left(v_n\right)-J_{\varepsilon}\left(w_n\right)
\right)=J_{\varepsilon}(v) .
		$$
Note that, once more due to the Br{\'e}zis-Lieb lemma,
		{$$
		\lim_{n \rightarrow\infty}\left( \left\|u_n\right\|_2^2-\left\|w_n\right\|_2^2\right)=\lim_{n \rightarrow\infty}\left(\left\|u_n\right\|_2^2-\left\|v_n\right\|_2^2+\left\|v_n\right\|_2^2-\left\|w_n\right\|_2^2\right)=\|u_c\|_2^2+\|v_c\|_2^2,
		$$
}
		whence, denoting $\beta:=\|u_c\|_2>0$ and $\gamma:=\|v_c\|_2>0$, there holds
		$$
		c^2-\beta^2-\gamma^2 \geq \liminf _n\left\|u_n\right\|_2^2-\beta^2-\gamma^2=\liminf _n\left\|w_n\right\|_2^2=: \delta^2 \geq 0 .
		$$
If $\delta>0$, then let us set $\tilde{w}_n:=\frac{\delta}{\left\|w_n\right\|_2} w_n \in \mathcal{S}(\delta)$. {Via explicit computations}, we have
$$\lim \limits_{n \rightarrow\infty} \big[J_{\varepsilon}\left(w_n\right)
-J_{\varepsilon}\left(\tilde{w}_n\right)\big]=0.$$
Hence, together with Lemma \ref{LemmaSubadd} and since $m_{\varepsilon}(c)$ is non-increasing with respect to $c>0$,
		\begin{equation}\label{eqSub}
			\begin{aligned}
				m_{\varepsilon}(c) & =\lim \limits_{n \rightarrow\infty} J_{\varepsilon}\left(u_n\right)\\
				&=J_{\varepsilon}(u_c)+J_{\varepsilon}(v_c)+\lim_{n \rightarrow\infty} J_{\varepsilon}\left(w_n\right)\\
				&=J_{\varepsilon}(u_c)+J_{\varepsilon}(v_c)+\lim_{n \rightarrow\infty} J_{\varepsilon}\left(\tilde{w}_n\right) \\
				& \geq m_{\varepsilon}(\beta)+m_{\varepsilon}(\gamma)+m_{\varepsilon}(\delta)\\
				& \geq m_{\varepsilon}\left(\sqrt{\beta^2+\gamma^2+\delta^2}\right) \\
				&\geq m_{\varepsilon}(c) .
			\end{aligned}
		\end{equation}
Thus all the inequalities in \eqref{eqSub} are in fact equalities and, in particular, $J_{\varepsilon}(u_c)=m_{\varepsilon}(\beta)$ and $J_{\varepsilon}(v_c)=$ $m_{\varepsilon}(\gamma)$. Therefore Lemma \ref{LemmaStrictSubadd} yields {that} $m_{\varepsilon}(\beta)+m_{\varepsilon}(\gamma)+m_{\varepsilon}(\delta)>m_{\varepsilon}\left(\sqrt{\beta^2+\gamma^2+\delta^2}\right)$, which contradicts \eqref{eqSub}.

If $\delta=0$, then $w_n \rightarrow 0$ in $L^2\left(\mathbb{R}^N\right)$, which, together with Lemma \ref{LemmaGN}, implies {that}
$$\lim \limits_{n \rightarrow\infty} \int_{\mathbb{R}^N} G_{+}\left(w_n\right) d x=0,$$
whence $\lim \limits \inf _n J_{\varepsilon}\left(w_n\right) \geq 0$. Then \eqref{eqSub} becomes
		$$
		m_{\varepsilon}(c)=\lim_{n \rightarrow\infty} J_{\varepsilon}\left(u_n\right)=J_{\varepsilon}(u_c)+J_{\varepsilon}(v_c)+\lim_{n \rightarrow\infty} J_{\varepsilon}\left(w_n\right) \geq m_{\varepsilon}(\beta)+m_{\varepsilon}(\gamma) \geq m_{\varepsilon}\left(\sqrt{\beta^2+\gamma^2}\right)=m_{\varepsilon}(c)
		$$
		and we get a contradiction as before.
	\end{proof}
\medskip

\noindent	
	\textbf{Proof of Theorem \ref{theoremPerturbed}.}
	Let $\bar{c}\geq0$ be determined by Lemma \ref{Lemmam(c)<0} and {let  $(u_n )_n\subset \mathcal{D}(c)$ be
a minimizing sequence} for $m_{\varepsilon}(c)$. Then, in virtue of Lemmas \ref{LemmaBoundedbelow} and \ref{LemmaStrongconver}, there exists $u_{\varepsilon} \in \mathcal{D}(c) \backslash 0$ such that, up to subsequences and translations, $u_n \rightharpoonup u_{\varepsilon}$ in $X$ and $u_n \rightarrow u_{\varepsilon}$ in $L^m\left(\mathbb{R}^N\right)$ for every
{$m$, with}
$2<m<q^{\prime}$ and  $u_n \rightarrow u_{\varepsilon}$ for a.e. $x\in \mathbb{R}^N$. From $(g_1)$ and $(g_3)$ we have ${\lim \limits_{n \rightarrow\infty}} \int_{\mathbb{R}^N} G_{+}\left(u_n\right) d x=\int_{\mathbb{R}^N} G_{+}\left(u_{\varepsilon}\right) d x$. {{Therefore}}, using Fatou's lemma, $m_{\varepsilon}(c) \leq J_{\varepsilon}\left(u_{\varepsilon}\right) \leq \lim \limits_{n \rightarrow\infty} J_{\varepsilon}\left(u_n\right)=m_{\varepsilon}(c)<-\beta$. In particular, there exists $\lambda_{\varepsilon} \in \mathbb{R}$ such that
	$$
	-\Delta u_{\varepsilon}-\Delta_q u_{\varepsilon}+\lambda_{\varepsilon} u_{\varepsilon}=g^{\varepsilon}\left(u_{\varepsilon}\right), \quad x\in \mathbb{R}^N.
	$$
	By Lemma \ref{LemmaPSseq},
	note that $\lambda_{\varepsilon}=0$ if $\lim \limits _{n \rightarrow \infty}\left\|u_n\right\|_2<c$.
	From \cite[Lemma 2.3]{Baldelli20222q}, we know that if \(u_{\varepsilon}\) is a solution of equation \eqref{Equationperturbed}, then \(u_{\varepsilon}\) satisfies the following Pohozaev identity:
	\begin{equation}\label{eqPoh}
		\frac{N-2}{2} \int_{\mathbb{R}^N}|\nabla u_{\varepsilon}|^2 \, dx + \frac{N-q}{q} \int_{\mathbb{R}^N}|\nabla u_{\varepsilon}|^q \, dx + \frac{\lambda N}{2} \int_{\mathbb{R}^N}|u_{\varepsilon}|^2 \, dx = N \int_{\mathbb{R}^N} G^{\varepsilon}(u_{\varepsilon}) \, dx.
	\end{equation}
	If $\lambda_{\varepsilon} \leq 0$,  \eqref{eqPoh} yields $$
	0>J_{\varepsilon}\left(u_{\varepsilon}\right) \geq\left( \left\|\nabla u_{\varepsilon}\right\|_2^2+\left\|\nabla u_{\varepsilon}\right\|_q^q\right) / N \geq 0,
	$$ which is a contradiction. Therefore $\lambda_{\varepsilon}>0$ and $\lim \limits _{n \rightarrow \infty}\left\|u_n\right\|_2=c$.

	If $\|u_{\varepsilon}\|_2=c_0<c$,  then arguing as in Lemma \ref{LemmaStrongconver}, we derive {that}
	$u_n \rightarrow u_{\varepsilon} $ a.e. in $\mathbb{R}^N$
	and
	$\nabla u_n \rightarrow \nabla u_{\varepsilon} $ a.e. in $ \mathbb{R}^N$.
	Thus,
	$$
	m_{\varepsilon}(c)=\lim_{n \rightarrow\infty} J_{\varepsilon}\left(u_n\right)=J_{\varepsilon}(u_c)+\lim_{n \rightarrow\infty}J_{\varepsilon}(u_n-u_c) \geq m_{\varepsilon}(c_0)+m_{\varepsilon}(c-c_0) \geq
	m_{\varepsilon}(c).
	$$
	And we reach a contradiction as before.
	Therefore $\lim \limits_{n \rightarrow\infty}\left\|u_n\right\|_2
=\left\|u_{\varepsilon}\right\|_2$. Finally,  {\cite[Theorem~1.4]{Jeanjean2022Onglobal} implies} that such $u_{\varepsilon}$ has constant sign and, up to {translations, $u_{\varepsilon}$} is radial and radially {monotone.} \hfill$\Box$

\section{Proof of Theorem \ref{theorem1}}\label{Sec4}
	Throughout this section, we assume that
$(g_0)-(g_4)$ hold.	The {family
$(u_{\varepsilon})_\varepsilon$ and  the  related} Lagrange multipliers $(\lambda_{\varepsilon})_\varepsilon$ are given in Theorem~\ref{theoremPerturbed}, where $\varepsilon \in (0,1)$.

	\begin{Lemma}\label{Lemma41}

		The quantities $\left\|\nabla u_{\varepsilon}\right\|_2$, $\left\|\nabla u_{\varepsilon}\right\|_q$ and $\lambda_{\varepsilon}$ are bounded for $\varepsilon \in (0,1)$.
	\end{Lemma}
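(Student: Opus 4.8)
The plan is to proceed in three steps: first obtain a bound on $\|\nabla u_\varepsilon\|_2$ and $\|\nabla u_\varepsilon\|_q$ that is uniform in $\varepsilon\in(0,1)$, then bound the reaction integral $\int_{\mathbb R^N}|g^\varepsilon(u_\varepsilon)u_\varepsilon|\,dx$ uniformly, and finally read off the bound on $\lambda_\varepsilon$ from the Nehari-type identity obtained by testing \eqref{Equationperturbed} with $u_\varepsilon$ itself.

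For the gradient bounds I would simply re-run the computation in the proof of Lemma~\ref{LemmaBoundedbelow}, observing that all constants there are independent of $\varepsilon$: since $G_-^\varepsilon\ge0$ one has $J_\varepsilon(u_\varepsilon)\ge\frac12\|\nabla u_\varepsilon\|_2^2+\frac1q\|\nabla u_\varepsilon\|_q^q-\int_{\mathbb R^N}G_+(u_\varepsilon)\,dx$, so that the chains \eqref{Eq-GN1} (for $\frac{2N}{N+2}<q<2$) and \eqref{Eq-GN2} (for $2<q<N$) apply verbatim with $u=u_\varepsilon\in\mathcal S(c)\subset\mathcal D(c)$. Because $\|u_\varepsilon\|_2=c$ is fixed and $J_\varepsilon(u_\varepsilon)=m_\varepsilon(c)<-\beta<0$, choosing $\delta$ small exactly as in that proof gives, when $q<2$, a uniform bound on $\|\nabla u_\varepsilon\|_2$; then $\int_{\mathbb R^N}G_+(u_\varepsilon)\,dx$ is uniformly bounded by \eqref{eq-G} together with Lemma~\ref{LemmaEmbed} (resp.\ Lemma~\ref{Lemma-GN inequality2}), and reinserting this into $0>J_\varepsilon(u_\varepsilon)\ge\frac1q\|\nabla u_\varepsilon\|_q^q+\frac12\|\nabla u_\varepsilon\|_2^2-\int_{\mathbb R^N}G_+(u_\varepsilon)\,dx$ yields the uniform bound on $\|\nabla u_\varepsilon\|_q$. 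When $2<q<N$ the two gradient terms exchange roles, the $L^q$-Gagliardo--Nirenberg inequality of Lemma~\ref{Lemma-GN inequality2} being used first. In either case $(u_\varepsilon)_\varepsilon$ is bounded in $X$.

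Next I would bound $\int_{\mathbb R^N}|g^\varepsilon(u_\varepsilon)u_\varepsilon|\,dx$ uniformly in $\varepsilon$, repeating the estimate performed on the Palais--Smale sequence in the proof of Lemma~\ref{Lem3-ae}: for $N\ge3$ one has the pointwise bound $|g^\varepsilon(s)|\le C_1(|s|+|s|^{q'-1})$ with $C_1$ independent of $\varepsilon$ (since $0\le\varphi_\varepsilon\le1$), whence $\int_{\mathbb R^N}|g^\varepsilon(u_\varepsilon)u_\varepsilon|\,dx\le C_1\int_{\mathbb R^N}(|u_\varepsilon|^2+|u_\varepsilon|^{q'})\,dx\le C$ by Lemma~\ref{LemmaEmbed} and the $X$-bound just obtained; for $N=2$ one uses $|g^\varepsilon(s)|\le C_2(|s|+|s|(e^{\alpha s^2}-1))$ and the uniform bound on $\|\nabla u_\varepsilon\|_2$, which allows one to fix $\alpha>0$ small enough that $\|u_\varepsilon\|_X\le\sqrt{\pi/\alpha}$ for all $\varepsilon$, apply the Trudinger--Moser inequality of Lemma~\ref{LemmaTrudinger-Moser} to get $\int_{\mathbb R^2}(e^{2\alpha u_\varepsilon^2}-1)\,dx\le C_\alpha$, and conclude by H\"older as in \eqref{eqq-g22}. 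Since $J_\varepsilon\in\mathcal C^1(X)$ and $u_\varepsilon$ solves \eqref{Equationperturbed}, testing with $\varphi=u_\varepsilon$ gives
$$
\|\nabla u_\varepsilon\|_2^2+\|\nabla u_\varepsilon\|_q^q+\lambda_\varepsilon c^2=\int_{\mathbb R^N}g^\varepsilon(u_\varepsilon)u_\varepsilon\,dx,
$$
so that $0<\lambda_\varepsilon c^2\le\int_{\mathbb R^N}|g^\varepsilon(u_\varepsilon)u_\varepsilon|\,dx\le C$, the positivity being already known from Theorem~\ref{theoremPerturbed}; this yields the uniform bound on $\lambda_\varepsilon$.

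The only delicate point is precisely the \emph{uniformity} of all constants in $\varepsilon$: one must check that the Gagliardo--Nirenberg constants, the growth bounds on $g^\varepsilon$, and (for $N=2$) the admissible Trudinger--Moser exponent $\alpha$ do not degenerate as $\varepsilon\to0^+$. This holds because $\varphi_\varepsilon$ takes values in $[0,1]$ and because the $X$-bound produced in the first step is itself $\varepsilon$-independent; once these observations are made the argument is routine and follows the pattern already established in Sections~\ref{Sec3}.
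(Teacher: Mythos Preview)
Your Step~1 is fine: re-running \eqref{Eq-GN1}--\eqref{Eq-GN2} with $u=u_\varepsilon$ and $J_\varepsilon(u_\varepsilon)<0$ does give $\varepsilon$-uniform bounds on $\|\nabla u_\varepsilon\|_2$ and $\|\nabla u_\varepsilon\|_q$. The genuine gap is in Step~2, at exactly the point you flag as delicate. You assert $|g^\varepsilon(s)|\le C_1(|s|+|s|^{q'-1})$ with $C_1$ independent of $\varepsilon$, justified by $0\le\varphi_\varepsilon\le1$. But $\varphi_\varepsilon\in[0,1]$ only gives $|g^\varepsilon(s)|\le |g_+(s)|+|g_-(s)|=|g(s)|$, and in the strongly sublinear regime $|g(s)|/|s|\to\infty$ as $s\to 0$. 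More directly: for every fixed $s\ne0$ and all $\varepsilon<|s|$ one has $\varphi_\varepsilon(s)=1$, hence $g^\varepsilon(s)=g(s)$; so a uniform-in-$\varepsilon$ pointwise bound on $g^\varepsilon$ is nothing other than a pointwise bound on $g$ itself, which fails for $g(s)=s\log s^2$. The estimate \eqref{eq-g1} you borrow from Lemma~\ref{Lem3-ae} is stated there for a \emph{fixed} $\varepsilon$, and its constant behaves like $|\log\varepsilon|$ in the logarithmic example. Consequently your Nehari-based bound on $\lambda_\varepsilon$ collapses.

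This is precisely why the paper bounds $\lambda_\varepsilon$ via the Pohozaev identity \eqref{eqPoh} rather than the Nehari identity: Pohozaev produces $\int G^\varepsilon(u_\varepsilon)\,dx\le\int G_+(u_\varepsilon)\,dx$, and the hypotheses $(g_1)$, $(g_3)$ are formulated for the \emph{primitive} $G_+$, yielding the $\varepsilon$-free estimate $G_+(s)\le C_\delta s^2+\delta|s|^{\bar q}$. One then reads off $\tfrac12\lambda_\varepsilon c^2\le\int G_+(u_\varepsilon)\,dx\le C$ when $N\ge3$ (and combines \eqref{eqPoh} with $J_\varepsilon(u_\varepsilon)<-\beta$ when $N=2$). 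Your argument can be repaired by keeping Step~1 but replacing the Nehari test with the Pohozaev identity for the bound on $\lambda_\varepsilon$.
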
	
	\begin{proof}
		By $(g_0), (g_1)$, and $(g_3)$, for every $\delta>0$ there exists $C_\delta>0$ such that for every $s \in \mathbb{R}$
		$$
		G_{+}(s) \leq C_\delta|s|^2+\delta|s|^{\bar{q}} .
		$$
Let us first consider the case $N \geq 3$. Since $\lambda_{\varepsilon}>0$, from \eqref{eqPoh} we obtain
		$$
		\frac{1}{2^*}\left\|\nabla u_{\varepsilon}\right\|_2^2+\frac{1}{q^*}\left\|\nabla u_{\varepsilon}\right\|_q^q<
		\int_{\mathbb{R}^N} G_{+}\left(u_{\varepsilon}\right) d x
		\leq C_\delta\left\|u_{\varepsilon}\right\|_2^2+\delta C_{N, \bar{q}}^{\bar{q}} c^{\bar{q} \left(1-\delta_{\bar{q}}\right)}\|\nabla u\|_2^{\bar{q} \delta_{\bar{q}}}.
		$$
		Taking $\delta<\left( C_{N, \bar{q}}^{\bar{q}} c^{\bar{q} \left(1-\delta_{\bar{q}}\right)}\right)^{-1}$, we obtain the boundedness of $\left\|\nabla u_{\varepsilon}\right\|_2$ and $\left\|\nabla u_{\varepsilon}\right\|_q$. In addition, this and \eqref{eqPoh} yield
		\begin{equation}\label{eq41}
			\begin{aligned}
				0&<\frac{1}{2} \lambda_{\varepsilon} c^2\\
				&<\int_{\mathbb{R}^N} \left(\frac{1}{2^*}\left|\nabla u_{\varepsilon}\right|^2+\frac{1}{q^*}\left|\nabla u_{\varepsilon}\right|^q+\frac{1}{2} \lambda_{\varepsilon}\left|u_{\varepsilon}\right|^2+ G_{-}^{\varepsilon}\left(u_{\varepsilon}\right) \right)d x\\
				&= \int_{\mathbb{R}^N} G_{+}\left(u_{\varepsilon}\right) d x \leq C<\infty,
			\end{aligned}
		\end{equation}
		and $(\lambda_{\varepsilon})_{\varepsilon}$ is bounded as well. Now, let us consider the case $N=2$. From \eqref{eqPoh} we get
		\begin{equation}\label{eq42}
			\begin{aligned}
				c^2 \lambda_{\varepsilon} &\leq \int_{\mathbb{R}^N} \left(\lambda_{\varepsilon}\left|u_{\varepsilon}\right|^2+2 G_{-}^{\varepsilon}\left(u_{\varepsilon}\right)\right) d x \\
				&\leq  \int_{\mathbb{R}^N} 2G_{+}\left(u_{\varepsilon}\right) d x\\
				&\leq C_\delta\left\|u_{\varepsilon}\right\|_2^2+\delta C_{N, \bar{q}}^{\bar{q}} c^{\bar{q} \left(1-\delta_{\bar{q}}\right)}\|\nabla u\|_2^{\bar{q} \delta_{\bar{q}}}.
			\end{aligned}
		\end{equation}
		Moreover, using \eqref{eqPoh} again, we obtain
		$$
		-\beta>J_{\varepsilon}\left(u_{\varepsilon}\right)=\frac{1}{2}\left(\left\|\nabla u_{\varepsilon}\right\|_2^2+\left\|\nabla u_{\varepsilon}\right\|_q^q-c^2 \lambda_{\varepsilon}\right),
		$$
		which, together with \eqref{eq42}, implies
		$$
		\frac{1}{2}\left\|\nabla u_{\varepsilon}\right\|_2^2+\frac{1}{2}\left\|\nabla u_{\varepsilon}\right\|_q^q \leq C_\delta c^2-\beta+\delta C_{N, \bar{q}}^{\bar{q}} c^{\bar{q} \left(1-\delta_{\bar{q}}\right)}\|\nabla u\|_2^{\bar{q}a \delta_{\bar{q}}}.
		$$
		Taking $\delta<\left( 2C_{N, \bar{q}}^{\bar{q}} c^{\bar{q} \left(1-\delta_{\bar{q}}\right)}\right)^{-1}$, we obtain the boundedness of 	$\left\|\nabla u_{\varepsilon}\right\|_2^2$ and $\left\|\nabla u_{\varepsilon}\right\|_q^q$. Hence, the boundedness of the family  $(\lambda_{\varepsilon})_{\varepsilon}$ follows from \eqref{eq42}.
	\end{proof}

	Note that for every $\varepsilon \in(0,1)$ and every $u \in \mathcal{D}(c)$ there holds $J_{\varepsilon}\left(u_{\varepsilon}\right) \leq J_{\varepsilon}(u) \leq J(u)$, whence $J_{\varepsilon}\left(u_{\varepsilon}\right) \leq \inf _{\mathcal{D}(c)} J=: m(c)$.
Now we {are ready to} complete the proof of Theorem \ref{theorem1}.\medskip

\noindent	\textbf{Proof of Theorem \ref{theorem1}.}
Let $\left(\varepsilon_n\right)_n$ be a sequence in $(0,1)$ such that $\varepsilon_n \rightarrow 0^{+}$ as $n \rightarrow \infty$, and let $\left(u_{\varepsilon_n}\right)_n$ be a sequence of solutions in Theorem \ref{theoremPerturbed}.
 From Lemma \ref{Lemma41}, there exist $u \in \mathcal{D}(c)$ and $\lambda \geq 0$ such that, up to a subsequence, $u_{\varepsilon_n} \rightharpoonup u$ in $X$ and $\lambda_{\varepsilon_n} \rightarrow \lambda$ as $n \rightarrow \infty$. Arguing as in \cite [Theorem 1.1]{Mederski2021optimal}, we obtain that $\varphi_{\varepsilon_n}\left(u_{\varepsilon_n}\right) g_{-}\left(u_{\varepsilon_n}\right) v \rightarrow g_{-}(u) v$ as $n\rightarrow \infty$ for a.e. $x\in \mathbb{R}^N$ for every $v \in \mathcal{C}_0^{\infty}\left(\mathbb{R}^N\right)$ and that $g_{+}\left(u_{\varepsilon_n}\right) v$ and $\varphi_{\varepsilon_n}\left(u_{\varepsilon_n}\right) g_{-}\left(u_{\varepsilon_n}\right) v$ are uniformly integrable (and tight). We deduce that for every $v \in \mathcal{C}_0^{\infty}\left(\mathbb{R}^N\right)$
$$
	-\lambda \int_{\mathbb{R}^N} u v d x \leftarrow J_{\varepsilon_n}^{\prime}\left(u_{\varepsilon_n}\right) v \rightarrow J^{\prime}(u) v,\quad {\mbox{as $n\rightarrow \infty$}},
	$$
	i.e.,
	$$
	-\Delta u-\Delta_q u+\lambda u=g(u), \quad x\in \mathbb{R}^N.
	$$
	Moreover, \eqref{eq41} (when $N \geq 3$) or \eqref{eq42} (when $N=2$ ) yields
	$$
	\int_{\mathbb{R}^N} G_{-}^{\varepsilon}\left(u_{\varepsilon}\right) d x \leq \int_{\mathbb{R}^N} G_{+}\left(u_{\varepsilon}\right) d x \leq C.
	$$
{Hence,} in view of Fatou's lemma, $G_{-}(u) \in L^1\left(\mathbb{R}^N\right)$. In particular, {as
shown in~\cite{Lcai2024Norm}, the couple} $(u, \lambda)$ satisfies the Pohozaev identity
	\begin{equation}\label{Lemma43}
		\frac{N-2}{2} \int_{\mathbb{R}^N}|\nabla u|^2 d x+\frac{N-q}{q} \int_{\mathbb{R}^N}|\nabla u|^q d x+\frac{\lambda N}{2} \int_{\mathbb{R}^N}|u|^2 d x=N \int_{\mathbb{R}^N} G(u) d x .
	\end{equation}
	We can assume that $\left\|\nabla u_{\varepsilon_n}\right\|_2$, $\left\|\nabla u_{\varepsilon_n}\right\|_q$ and $\int_{\mathbb{R}^N} G_{-}^{\varepsilon_n}\left(u_{\varepsilon_n}\right) d x$ are convergent as $n \rightarrow \infty$. Since
$$\int_{\mathbb{R}^N} G_{+}\left(u_{\varepsilon_n}\right) d x \rightarrow \int_{\mathbb{R}^N} G_{+}(u)dx,\,\,\text{as}\,\, n\rightarrow\infty,$$
 recall that each $u_{\varepsilon_n}$ is radially symmetric, we have $J(u) \leq \lim \limits_{n \rightarrow \infty} J_{\varepsilon_n}\left(u_{\varepsilon_n}\right)<0$ from Lemma \ref{Lemmam(c)<0}; in particular, $u \neq 0$.

If $\lambda=0$, from \eqref{Lemma43}, we have $J(u)=(\|\nabla u\|_2^2+\|\nabla u\|_q^q) / N>0$, therefore $\lambda>0$. We prove that $u_{\varepsilon} \rightarrow u$ as $\epsilon \rightarrow 0$ in $X$. Since $\lambda>0$, from \eqref{Lemma43} there follows
$$
	\begin{aligned}
	&\int_{\mathbb{R}^N}\left( \frac{N-2}{2}|\nabla u|^2+\frac{N-q}{q}|\nabla u|^2+\frac{N\lambda}{2} |u|^2+N G_{-}(u) \right) d x \\
		\leq& \lim _{\varepsilon \rightarrow0} \int_{\mathbb{R}^N} \left( \frac{N-2}{2}|\nabla u_{\varepsilon }|^2+\frac{N-q}{q}|\nabla u_{\varepsilon}|^2+\frac{N\lambda}{2} \left|u_{\varepsilon}\right|^2+N G_{-}^{\varepsilon}\left(u_{\varepsilon}\right) \right) d x \\
		= &\lim _{\varepsilon \rightarrow 0}N \int_{\mathbb{R}^N} G_{+}\left(u_{\varepsilon}\right) d x\\
		=& N \int_{\mathbb{R}^N} G_{+}(u) d x \\
		 =&\int_{\mathbb{R}^N}\left(  \frac{N-2}{2}|\nabla u|^2+\frac{N-q}{q}|\nabla u|^2+\frac{N\lambda}{2} |u|^2+N G_{-}(u) \right) dx.
	\end{aligned}
	$$
{This  yields that} $\left\|u_{\varepsilon}\right\|_2 \rightarrow\|u\|_2$; in particular, $u \in \mathcal{S}(c)$. Furthermore, in a similar way, we can obtain $J(u) \leq$ $\lim \limits _{\varepsilon\rightarrow0} J_{\varepsilon}\left(u_{\varepsilon}\right) \leq m(c) \leq J(u)$, which shows that $\left\|\nabla u_{\varepsilon}\right\|_2 \rightarrow\|\nabla u\|_2$ and $J(u)=m(c)$.

Since \eqref{Equationperturbed}  is translation-invariant, we can assume that every $u_{\varepsilon}$ is radial, hence so is $u$. Moreover, because of $u \neq 0$, there exists $x \in \mathbb{R}^N$ such that $u(x) \neq 0$ and so $u_{\varepsilon}(x)$ has the same sign as $u(x)$ for every $\varepsilon$ sufficiently small. {Since} every $u_{\varepsilon}$ has constant sign, it has everywhere the same sign as $u(x)$, hence $u$ has constant sign too. Finally, if there exist $x, y, z \in \mathbb{R}^N$ such that $|x|<|y|<|z|$ and either $u(y)<\min \{u(x), u(z)\}$ or $u(y)>\max \{u(x), u(z)\}$, then arguing as before, we obtain a contradiction. \hfill$\Box$

\begin{remark}\label{re111}
	(i) The proof of Theorem \ref{theorem1} contains the relevant result that $m_{\varepsilon}(c) \rightarrow m(c)$ as $\varepsilon \rightarrow 0^{+}$.\\ (ii) Unlike the proof of Theorem \ref{theorem2}, we cannot use the information $\lambda>0$ to deduce $u \in \mathcal{S}(c)$, because we do not know whether $u$ is a critical point of $\left.J\right|_{\mathcal{D}(c)}$.
\end{remark}	

\noindent	\textbf{Proof of Proposition \ref{prop1}.}
(i) We fix $c>0$ and take some function $u \in\mathbb{S}(c) \cap\mathcal{C}_0^{\infty}\left(\mathbb{R}^N\right) \backslash\{0\}$. For $s>0$, let $u_t(x)=t^{N / 2} u(t x)$. Then, we see that $u_t \in \mathcal{S}(c)$. By the assumption in (i), there exist a positive constant $\delta$ such that
$$
G(s) \geq C|s|^{\tilde{q}}, \quad \text { if }|s|<\delta,
$$
where $C$ is a constant determined by
$$
C=\int_{\mathbb{R}^N}\left(|\nabla u|+|\nabla u|^q\right) d x / \int_{\mathbb{R}^N}|u|^{\tilde{q}} d x.
$$
Hence $G\left(\left|u_t\right|\right) \geq C\left|u_t\right|^{\tilde{q}}$ holds for the sufficiently small $s>0$. Thus, we have
\begin{equation}
	\begin{aligned}
		J\left(u_t\right) &=\frac{1}{2} \int_{\mathbb{R}^N}\left|\nabla u_t\right|^2 +\frac{1}{q} \int_{\mathbb{R}^N}\left|\nabla u_t\right|^qdx-
		\int_{\mathbb{R}^N}G(u_t)dx\\
		&\leq\frac{1}{2} \int_{\mathbb{R}^N}\left|\nabla u_t\right|^2 +\frac{1}{q} \int_{\mathbb{R}^N}\left|\nabla u_t\right|^qdx-
		C\int_{\mathbb{R}^N}\left|u_t\right|^{\tilde{q}}dx\\
		&\leq\frac{1}{2}t^2 \int_{\mathbb{R}^N}\left|\nabla u\right|^2 +\frac{1}{q} t^q\int_{\mathbb{R}^N}\left|\nabla u\right|^qdx-
		t^{\tilde{q}\delta_{\tilde{q}}}C\int_{\mathbb{R}^N}
			\left|u\right|^{\tilde{q}}dx\\
		&\leq\left(\frac{1}{2}t^2-	t^{\tilde{q}\delta_{\tilde{q}}}\right) \int_{\mathbb{R}^N}\left|\nabla u\right|^2 +
		\left(\frac{1}{q}t^{q(1+\delta_q)}-	t^{\tilde{q}\delta_{\tilde{q}}}\right) \int_{\mathbb{R}^N}\left|\nabla u\right|^qdx\\
	\end{aligned}
\end{equation}
Since $\tilde{q}\delta_{\tilde{q}}=\min\{2,q(1+\delta_q)\}$,
it concludes that $m(c) \leq J\left(u_t\right)<0$ for any $c>0$.\\
(ii) By the assumption in (ii), there exists a positive constant $C_g$ depending on $g$ such that
$$
G(t) \leq C_g|t|^{\tilde{q}}
$$
for any $g\geq 0$. For $u \in \mathcal{S}(c)$, using the Gagliardo-Nirenberg inequality, we have
$$
\int_{\mathbb{R}^N} G(u) d x \leq C_g\|u\|_{\tilde{q}}^{\tilde{q}} \leq C_g C_{N,q}^{\tilde{q}}c^{2 / N}\|\nabla u\|_{2}^2.
$$
For a sufficiently small $c>0$, it can be shown that $C_gC_{N,q}c^{2/N} \leq 1/2$ holds. After choosing an appropriately small $c$, we have
$$
J(u) \geq \frac{1}{2}\|\nabla u\|_{2}^2+\frac{1}{q}\|\nabla u\|_{q}^q-\frac{1}{2}\|\nabla u\|_{2}^2\geq 0.
$$
This means $m(c) \geq 0$ for a small $\alpha>0$. Hence, we obtain $\bar{c}>0$.

\noindent	\textbf{Proof of Proposition \ref{prop2}.} It follows from Lemma \ref{Lemmam(c)<0} and Remark \ref{re111} (i).

\noindent	\textbf{Proof of Theorem \ref{theorem2}.}
First of all, note that $G$ satisfies the assumptions $({g}_0)-({g}_4)$,  and that such a solution exists if and only if $ (g_2) $ holds: the `if' part follows from Theorem \ref{theorem1}, while the `only if' part follows from the fact that, since $G(u) \in L^1\left(\mathbb{R}^N\right)$ and
$$
\int_{\mathbb{R}^N}\frac{(N-2)}{2}|\nabla u|^2+\frac{(N-q)}{q}|\nabla u|^q+\frac{N}{2} \lambda|u|^2 d x= N \int_{\mathbb{R}^N} G(u) d x.
$$
Let $s>0$. Clearly, $G(s)>0$ if and only if
$$
\widetilde{G}(s):=\frac{\alpha}{2}\left(\ln s^2-1\right)+\frac{\mu}{p} s^{p-2}>0 .
$$
Since
$$
\widetilde{G}^{\prime}(s)=\frac{\alpha}{s}+\mu \frac{p-2}{p} s^{p-3},
$$
we have,
$$
\max_{s>0} \widetilde{G}(s)=\frac{\alpha}{2}\left(\ln \left(\frac{\alpha p}{\mu(2-p)}\right)^{2 /(p-2)}-1\right)-\frac{\alpha}{p-2}.
$$
Observing that $ (g_2) $ holds if and only if $\max_{s>0} \widetilde{G}(s)>0$. Hence, the proof is concluded.
	
\section{Proof of Theorem \ref{thm2muty}}\label{Sec5}
	In this section, we first present some fundamental concepts and properties concerning {the} Orlicz
	spaces. For {more details we}, refer to \cite{Adams2003Sobolev,Fukagai2006NormOrlicz,Rao1985Orlicz}

	\begin{definition}
{\rm		An {\em $N$-function $A$} is a nonnegative continuous function $A:\mathbb{R} \rightarrow[0,\infty)$ that satisfies the following conditions:\\
		$\phantom{i}(i)$ $A$ is convex and even.\\
		$(ii)$ $\lim \limits _{t \rightarrow 0} \frac{A(t)}{t}=0$ and $\lim \limits _{t \rightarrow \infty} \frac{A(t)}{t}=\infty$.}
	\end{definition}
	An N-function $A$ is said to satisfy {the} $\Delta_2$ {\em condition globally} if and only if there exists $K>0$ such that
	$$A(2 s) \leq K A(s), \quad \text{for all}\ s \in \mathbb{R}.$$
	An N-function $A$ is said to satisfy the
$\nabla_2$ {\em condition globally} if and only if there exists {$\ell>1$} such that
	$$2 \ell A(s) \leq A(\ell s), \quad \text{for all}\ s \in \mathbb{R}.$$
	Equivalently, $A$ is {differentiable, with $A'=a$,} satisfies the $\Delta_2$ (respectively, $\nabla_2$ ) condition globally if and only if there exists $C>1$ such that
	\begin{equation}\label{Eqdetla}
		C A(s) \geq s a(s) \text { (respectively, } s a(s) \geq C A(s)\text { ),  for all } s \in \mathbb{R} .
	\end{equation}
From now on, $A$ {denotes} the same function {given  in assumtion $(A)$.}

{The Orlicz space $V$ associated with $A$ in $(A)$ is given by}
	$$
	V:=\left\{u \in L_{\text {loc }}^1\left(\mathbb{R}^N\right): A(u) \in L^1\left(\mathbb{R}^N\right)\right\},
	$$
 endowed with the norm
	$$
	\|u\|_V:=\inf \left\{\kappa>0: \int_{\mathbb{R}^N} A(u / \kappa) d x \leq 1\right\}.
	$$
	Since $A$ is an $N$-function that satisfies the $\Delta_2$ and $\nabla_2$ conditions globally, $\left(V,\|\cdot\|_V\right)$ is a reflexive Banach space. Moreover, {we will} use the following lemma.
	
	\begin{Lemma}\label{LemmaA(u)conver}
		(i) Let $u_n, u \in V$. Then $\lim \limits _{n \rightarrow \infty}\left\|u_n-u\right\|_V=0$ if and only if  $\lim \limits _{n \rightarrow \infty} \int_{\mathbb{R}^N} A\left(u_n-u\right) d x=0$.\\
		(ii) Let $\bar{V} \subset V$. Then $\bar{V}$ is a bounded {set} {in $V$} if and only if
		$$
		\left\{\int_{\mathbb{R}^N} A(u) d x: u \in \bar{V}\right\}
		$$
		is bounded {in $\mathbb R$}.\\
		(iii) Let $u_n, u \in V$. If $u_n \rightarrow u$ for a.e. $x\in \mathbb{R}^N$and $\int_{\mathbb{R}^N} A\left(u_n\right) d x \rightarrow \int_{\mathbb{R}^N} A(u) d x$, then $\left\|u_n-u\right\|_V \rightarrow 0.$
	\end{Lemma}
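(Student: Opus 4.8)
The three assertions are the classical equivalences between modular and norm convergence (and modular versus norm boundedness) in an Orlicz space whose defining $N$-function satisfies the $\Delta_2$ condition; the reflexivity granted by $\nabla_2$ plays no role in them. My plan is to deduce all three from the definition of the Luxemburg norm $\|\cdot\|_V$, the convexity of $A$, the fact that an $N$-function is nondecreasing on $[0,\infty)$, and the iterated $\Delta_2$ inequality $A(2^m s)\le K^m A(s)$, as in \cite{Rao1985Orlicz,Fukagai2006NormOrlicz}.

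For (i): if $\|u_n-u\|_V\to 0$, then for $n$ large $t_n:=\|u_n-u\|_V\le 1$; choosing $\kappa_j\downarrow t_n$ with $\int_{\mathbb R^N}A((u_n-u)/\kappa_j)\,dx\le 1$ and applying Fatou's lemma gives $\int_{\mathbb R^N}A((u_n-u)/t_n)\,dx\le 1$, whence convexity yields $A(u_n-u)\le t_n A((u_n-u)/t_n)$ and so $\int_{\mathbb R^N}A(u_n-u)\,dx\le t_n\to 0$. Conversely, if $\int_{\mathbb R^N}A(u_n-u)\,dx\to 0$, fix $\kappa\in(0,1)$, pick $m$ with $2^m\ge 1/\kappa$, and combine monotonicity of $A$ on $[0,\infty)$ with the iterated $\Delta_2$ bound to get $\int_{\mathbb R^N}A((u_n-u)/\kappa)\,dx\le K^m\int_{\mathbb R^N}A(u_n-u)\,dx\to 0<1$, hence $\|u_n-u\|_V\le\kappa$ for large $n$; since $\kappa$ is arbitrary, $\|u_n-u\|_V\to 0$. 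For (ii): if $\|u\|_V\le R$ then $\int_{\mathbb R^N}A(u/R)\,dx\le 1$, so $\int_{\mathbb R^N}A(u)\,dx\le K^m$ for $2^m\ge\max\{R,1\}$; conversely, $\int_{\mathbb R^N}A(u)\,dx\le M$ and convexity give $A(u/\kappa)\le\kappa^{-1}A(u)$ for $\kappa\ge 1$, hence $\int_{\mathbb R^N}A(u/\kappa)\,dx\le 1$ once $\kappa\ge\max\{M,1\}$, i.e. $\|u\|_V\le\max\{M,1\}$ (this direction needs only convexity).

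For (iii): this is a Br\'ezis--Lieb-type splitting. The first step is the almost-additivity inequality: for every $\varepsilon>0$ there is $C_\varepsilon>0$ with $A(s+t)\le(1+\varepsilon)A(s)+C_\varepsilon A(t)$ for all $s,t\in\mathbb R$, obtained by writing $s+t=\theta(s/\theta)+(1-\theta)(t/(1-\theta))$, using convexity, and absorbing the factors $\theta A(s/\theta)$ and $(1-\theta)A(t/(1-\theta))$ via $\Delta_2$ with $\theta$ close to $1$. Applying it with $w_n:=u_n-u$ in the two ways $u_n=w_n+u$ and $w_n=u_n+(-u)$ gives a two-sided bound $|A(u_n)-A(w_n)|\le C\varepsilon\,A(w_n)+C_\varepsilon A(u)$. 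Since $\int_{\mathbb R^N}A(u_n)\,dx\to\int_{\mathbb R^N}A(u)\,dx<\infty$, part (ii) makes $(u_n)_n$, hence $(w_n)_n$, bounded in $V$, so $\sup_n\int_{\mathbb R^N}A(w_n)\,dx<\infty$; together with $w_n\to 0$ a.e. and $A(u)\in L^1(\mathbb R^N)$, the standard Br\'ezis--Lieb scheme (cutting at $\big(A(u_n)-A(w_n)-A(u)-\varepsilon A(w_n)\big)^+$, which is dominated by $(C_\varepsilon-1)A(u)\in L^1$ and tends to $0$ a.e., then letting $n\to\infty$ by dominated convergence and afterwards $\varepsilon\to 0$; see \cite{Willem1996Minimax}) gives $\int_{\mathbb R^N}\big(A(u_n)-A(w_n)-A(u)\big)\,dx\to 0$. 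Combined with the hypothesis $\int_{\mathbb R^N}A(u_n)\,dx\to\int_{\mathbb R^N}A(u)\,dx$ this forces $\int_{\mathbb R^N}A(w_n)\,dx\to 0$, and part (i) then yields $\|u_n-u\|_V\to 0$.

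The only genuinely delicate point is the almost-additivity inequality in (iii), and more precisely getting the coefficient in front of $A(s)$ as close to $1$ as required: this is exactly where $\Delta_2$ (and not merely convexity) is indispensable, and where one must keep track of how the number of doublings grows as the convex parameter $\theta\to 1^-$. Once that inequality is secured, the remaining steps are routine manipulations of the Luxemburg norm and a standard dominated-convergence argument.
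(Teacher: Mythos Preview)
Your argument is correct and follows essentially the same route as the paper: the paper's proof simply cites \cite{Rao1985Orlicz} for (i) and (ii) and says (iii) is a consequence of (i), (ii) and the Br\'ezis--Lieb lemma \cite{Brezis1983BL}, while you have unpacked precisely those references---the Luxemburg-norm/$\Delta_2$ manipulations for (i)--(ii) and the Br\'ezis--Lieb splitting (via the almost-additivity inequality for $A$) for (iii). One small imprecision: in the dominated-convergence step you should cut at $\bigl(|A(u_n)-A(w_n)-A(u)|-\varepsilon A(w_n)\bigr)^+$ rather than without the absolute value, so that both inequalities $A(u_n)-A(w_n)\le \varepsilon A(w_n)+C_\varepsilon A(u)$ and $A(w_n)-A(u_n)\le \varepsilon A(w_n)+C_\varepsilon' A(u)$ are used and the dominating function becomes $(C_\varepsilon+1)A(u)$; otherwise the scheme is exactly the standard one.
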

	\begin{proof}
		(i) and (ii) follow from \cite{Rao1985Orlicz}, while (iii) is a consequence of (i), (ii) and \cite{Brezis1983BL}.
	\end{proof}

{The space $W$ introduced in \eqref{W} is exactly
given by} $W=X \cap V$. Since {$X=(X,\|\cdot\|_X)$
and}  $\left(V,\|\cdot\|_V\right)$ {are reflexive Banach spaces}, so is $\left(W,\|\cdot\|_W\right)$, with the norm
	$$
	\|u\|_W:={\|u\|_X+\|u\|_V=}\|\nabla u\|_2+\|u\|_2+\|\nabla u\|_q+\|u\|_V.
	$$
{The next result is a} variant of Lions lemma.
	\begin{Lemma}\label{LemmaA(u)lions}
		Suppose that {$(u_n)_n$  is a bounded sequence
in~$W$  and that} for some $r>0$
		$$
		\lim _{n \rightarrow \infty} \sup _{y \in \mathbb{R}^N} \int_{B_r(y)}\left|u_n\right|^2 d x=0 .
		$$
Then $u_n \rightarrow 0$ in $L^p\left(\mathbb{R}^N\right)$ for every $p \in\left[2,q^{\prime}\right)$, where {$q^{\prime}
=\max \left\{2^*, q^*\right\}$}.
	\end{Lemma}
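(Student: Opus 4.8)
The plan is to reduce the statement to the $X$-version of the Lions lemma already at hand (Lemma~\ref{LemmaLions}), the only genuinely new contribution being the endpoint exponent $p=2$. This is exactly where the hypotheses packaged into the $N$-function $A$ from assumption (A) — in particular $\lim_{s\to0}A(s)/s^2=\infty$ — must be used, since a bound on $(u_n)_n$ in $X$ alone controls the $L^2$-norm but does not force it to zero under the vanishing condition.

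First I would record two consequences of the boundedness of $(u_n)_n$ in $W$. Since $\|\cdot\|_X\le\|\cdot\|_W$, the sequence is bounded in $X$ and satisfies the hypotheses of Lemma~\ref{LemmaLions}; hence, fixing once and for all some exponent $p_0\in(2,q^{\prime})$, we already have $\|u_n\|_{p_0}\to0$ as $n\to\infty$. On the other hand, since $\|\cdot\|_V\le\|\cdot\|_W$, the set $\{u_n\}$ is bounded in the Orlicz space $V$, so by Lemma~\ref{LemmaA(u)conver}(ii) there is a constant $C>0$ with $\int_{\mathbb{R}^N}A(u_n)\,dx\le C$ for every $n$.

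The core step is the estimate of $\|u_n\|_2$. Let $\varepsilon>0$. Because $A(s)/s^2\to\infty$ as $s\to0$ and $A\ge0$, there is $\delta\in(0,1)$ with $s^2\le\varepsilon A(s)$ whenever $|s|\le\delta$; on the complement one has $s^2=|s|^{p_0}|s|^{2-p_0}\le\delta^{2-p_0}|s|^{p_0}$ because $2-p_0<0$. Splitting the integral over $\{|u_n|\le\delta\}$ and $\{|u_n|>\delta\}$ and using these two bounds,
\[
\int_{\mathbb{R}^N}|u_n|^2\,dx\le\varepsilon\int_{\mathbb{R}^N}A(u_n)\,dx+\delta^{2-p_0}\int_{\mathbb{R}^N}|u_n|^{p_0}\,dx\le\varepsilon C+\delta^{2-p_0}\|u_n\|_{p_0}^{p_0}.
\]
Letting $n\to\infty$ and invoking $\|u_n\|_{p_0}\to0$ gives $\limsup_n\|u_n\|_2^2\le\varepsilon C$, and since $\varepsilon>0$ is arbitrary, $\|u_n\|_2\to0$.

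Finally I would upgrade this to all $p\in[2,q^{\prime})$. For $p\in(2,q^{\prime})$ the conclusion is already contained in Lemma~\ref{LemmaLions}; alternatively, since $X\hookrightarrow L^{q^{\prime}}(\mathbb{R}^N)$ continuously by Lemma~\ref{LemmaEmbed}, $(u_n)_n$ is bounded in $L^{q^{\prime}}$, and for such $p$ the interpolation inequality $\|u_n\|_p\le\|u_n\|_2^{1-\theta}\|u_n\|_{q^{\prime}}^{\theta}$ with the appropriate $\theta\in(0,1)$ together with $\|u_n\|_2\to0$ finishes the proof. The main (and essentially only) obstacle is precisely the $L^2$ endpoint: the superquadratic behaviour of $A$ at the origin is what turns the uniform bound $\int_{\mathbb{R}^N}A(u_n)\,dx\le C$ into control of the ``near-zero'' portion of the $L^2$-mass, the only part of $\|u_n\|_2$ not already killed by the classical Lions lemma.
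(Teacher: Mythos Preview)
Your proof is correct and follows essentially the same route as the paper: invoke Lemma~\ref{LemmaLions} for the open range $(2,q')$, and for the endpoint $p=2$ split $\{|u_n|\le\delta\}$ versus $\{|u_n|>\delta\}$, using $s^2\le\varepsilon A(s)$ near the origin (from $\lim_{s\to0}A(s)/s^2=\infty$) and $s^2\le\delta^{2-p_0}|s|^{p_0}$ away from it, together with the uniform bound on $\int A(u_n)\,dx$ from Lemma~\ref{LemmaA(u)conver}(ii). The only cosmetic difference is that you fix a single auxiliary exponent $p_0$ and add an optional interpolation remark at the end, whereas the paper phrases the splitting with a generic $m\in(2,q')$.
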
	
	\begin{proof}
{Lemma~\ref{LemmaLions} implies that} $u_n \rightarrow 0$ in $L^m\left(\mathbb{R}^N\right)$ for every $m \in\left(2,q^{\prime}\right)$. We {claim that the strong convergence occurs also} in $L^2\left(\mathbb{R}^N\right)$. Take any $m \in\left(2,q^{\prime}\right)$ and $\varepsilon>0$. {Then, there exists} $\delta>0$ such that
		$$
		\begin{aligned}
			& |s|^2 \leq \varepsilon A(s), \quad \text { if }\,|s| \in[0, \delta], \\
			& |s|^2 \leq \delta^{2-m}|s|^m, \quad \text { if }\,|s|>\delta.
		\end{aligned}
		$$
Hence, we get
\begin{align*}
		\limsup _{n \rightarrow \infty} \int_{\mathbb{R}^N}\left|u_n\right|^2 d x \leq \varepsilon \limsup _{n \rightarrow \infty} \int_{\mathbb{R}^N} A\left(u_n\right) d x+\delta^{2-m} \limsup _{n \rightarrow \infty} \int_{\mathbb{R}^N}\left|u_n\right|^m d x=\varepsilon \limsup _{n \rightarrow \infty} \int_{\mathbb{R}^N} A\left(u_n\right) d x .
\end{align*}
{Letting $\varepsilon \rightarrow 0^+$, we  conclude the
proof thanks to} Lemma \ref{LemmaA(u)conver} (ii).
	\end{proof}
	
	Let $\mathcal{O}$ be any subgroup $\mathcal{O}(N)$ such that $\mathbb{R}^N$ is compatible with $\mathcal{O}$ (see \cite{Lions1982sym}), i.e., $\lim \limits _{|y| \rightarrow \infty} m(y, r)=$ $\infty$ for some $r>0$, where, for
{all} $y \in \mathbb{R}^N$,
	$$
	m(y, r):=\sup \left\{n \in \mathbb{N} \text { : there exist } g_1, \ldots, g_n \in \mathcal{O} \text { such that } {B_r}\left(g_i y\right) \cap B_r\left(g_j y\right)=\emptyset \text { for } i \neq j\right\} \text {. }
	$$
	In view of \cite{Lions1982sym}, {the space} $H_{\mathcal{O}}^1\left(\mathbb{R}^N\right)$ embeds compactly into $L^m\left(\mathbb{R}^N\right)$ for
{all $m$, with} $2<m<2^{*}$. Using the same argument, we
 obtain that $W_{\mathcal{O}}$, {defined in~\eqref{WO},}
embeds compactly into $L^m\left(\mathbb{R}^N\right)$ for
{all $m$, with} $2\leq m<q^{\prime}$, {as proved in the next result.}

	\begin{corollary}\label{CorLoc}
		Suppose that {$(u_n)_n$  is a bounded sequence
in~$W_{\mathcal{O}}$  and that}
 $u_n \rightarrow 0$ in $L_{\text {loc }}^2\left(\mathbb{R}^N\right)$. Then $u_n \rightarrow 0$ in $L^m\left(\mathbb{R}^N\right)$ for every $m \in\left[2,q^{\prime}\right)$.
	\end{corollary}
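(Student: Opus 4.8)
The plan is to deduce the statement directly from Lemma \ref{LemmaA(u)lions} once we verify its vanishing hypothesis, namely that
\[
\lim_{n\to\infty}\ \sup_{y\in\mathbb{R}^N}\int_{B_r(y)}|u_n|^2\,dx=0
\]
for a suitable $r>0$. Since $\mathbb{R}^N$ is compatible with $\mathcal{O}$, fix $r>0$ with $\lim_{|y|\to\infty}m(y,r)=\infty$. The key observation is that $\mathcal{O}$-invariance makes the local $L^2$-mass constant along orbits: if $g\in\mathcal{O}$, then substituting $x=gz$ and using $|gz-gy|=|z-y|$ together with $u_n(gz)=u_n(z)$ gives
\[
\int_{B_r(gy)}|u_n|^2\,dx=\int_{B_r(y)}|u_n(gz)|^2\,dz=\int_{B_r(y)}|u_n|^2\,dz .
\]

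First I would exploit this to control the mass for large $|y|$ uniformly in $n$. Given $y$ and $g_1,\dots,g_k\in\mathcal{O}$ with the balls $B_r(g_iy)$ pairwise disjoint, summing the identity above and using $C_0:=\sup_n\|u_n\|_2^2<\infty$ (recall $(u_n)_n$ is bounded in $W_{\mathcal{O}}\subset W$) yields $k\int_{B_r(y)}|u_n|^2\,dx\le\int_{\mathbb{R}^N}|u_n|^2\,dx\le C_0$; taking the supremum over admissible families gives
\[
\int_{B_r(y)}|u_n|^2\,dx\le \frac{C_0}{m(y,r)}\qquad\text{for every }n ,
\]
and the right-hand side tends to $0$ as $|y|\to\infty$. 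Hence, given $\varepsilon>0$, there is $\rho>0$ such that $\int_{B_r(y)}|u_n|^2\,dx<\varepsilon$ whenever $|y|\ge\rho$, for all $n$.

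Next I would handle the complementary region $|y|<\rho$ via the local convergence: there $B_r(y)\subset B_{\rho+r}(0)$, so
\[
\sup_{|y|<\rho}\int_{B_r(y)}|u_n|^2\,dx\le\int_{B_{\rho+r}(0)}|u_n|^2\,dx\longrightarrow 0
\]
because $u_n\to0$ in $L_{\mathrm{loc}}^2(\mathbb{R}^N)$. Combining the two estimates, $\sup_{y\in\mathbb{R}^N}\int_{B_r(y)}|u_n|^2\,dx<\varepsilon$ for $n$ large, and since $\varepsilon>0$ is arbitrary the vanishing hypothesis of Lemma \ref{LemmaA(u)lions} holds. Applying that lemma then gives $u_n\to0$ in $L^p(\mathbb{R}^N)$ for every $p\in[2,q^{\prime})$, which is the claim.

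The computations here are all elementary; the only point requiring a little care is the interplay between the two regimes — the symmetry estimate is uniform in $n$ but effective only for large $|y|$, whereas the $L_{\mathrm{loc}}^2$ hypothesis is effective on bounded sets but only for large $n$ — so one must fix the threshold $\rho$ from the first estimate before invoking the second. One should also note that the disjointness of the orbit balls $B_r(g_iy)$ is exactly what prevents overcounting when summing their $L^2$-masses, and that the passage from $L^2$-vanishing on balls down to global $L^2$ (and thence to $L^p$ for $p\in[2,q^{\prime})$) is precisely the content of Lemma \ref{LemmaA(u)lions}, which already incorporates the Orlicz space $V$ in order to reach the endpoint exponent $2$.
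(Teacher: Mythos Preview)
Your proof is correct and takes essentially the same approach as the paper: both verify the vanishing hypothesis of Lemma~\ref{LemmaA(u)lions} by using the $\mathcal{O}$-invariance together with the disjoint-balls counting argument to rule out concentration at infinity, and then invoke the $L^2_{\mathrm{loc}}$ convergence on bounded regions. The only cosmetic difference is that the paper argues by contradiction whereas you give the direct quantitative bound $\int_{B_r(y)}|u_n|^2\,dx\le C_0/m(y,r)$.
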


\begin{proof}
		Suppose that
		\begin{equation}\label{EqB(0)}
			\int_{B_1\left(y_n\right)}\left|u_n\right|^2 d x \geq C>0,
		\end{equation}
		for some sequence {$(y_n)_n \subset \mathbb{R}^N$ and a suitable} constant $C$. Observe that in the family $\left\{B_1\left(h y_n\right)\right\}_{h \in \mathcal{O}}$, we find an increasing number of disjoint balls provided that $\left|y_n\right| \rightarrow \infty$. Since {$(u_n)_n$} is bounded in $L^2\left(\mathbb{R}^N\right)$ and invariant with respect to $\mathcal{O}$, by \eqref{EqB(0)}
{the sequence $(y_n)_n$} must be bounded. Then, for sufficiently large $r$ {we obtain}
		$$
		\int_{B_r(0)}\left|u_n\right|^2 d x \geq \int_{B_r\left(y_n\right)}\left|u_n\right|^2 d x \geq c>0.
		$$
{This contradicts} \eqref{EqB(0)}. Therefore,
{the conclusion holds thanks to}
Lemma~\ref{LemmaA(u)lions}.
	\end{proof}

Then, we have the following result.
	
	\begin{proposition}
		If $(A)$ and $(f_0)-(f_3)$ hold, then the functional $\left.J\right|_W: W \rightarrow \mathbb{R}$ is of class $\mathcal{C}^1$. {Moreover, $\left.J\right|_{W \cap \mathcal{S}(c)}^{\prime}(u)=0$, with $u \in W \cap \mathcal{S}(c)$,}
if and only if there exists $\lambda \in \mathbb{R}$ such that $(u, \lambda)$ is a solution of~\eqref{Equation}.
	\end{proposition}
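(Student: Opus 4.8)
The plan is to split $J|_W$ into its natural pieces and show each is of class $\mathcal{C}^1$. Recalling $g=f-a$, hence $G=F-A$, set $\Phi_2(u):=\frac12\|\nabla u\|_2^2$ and $\Phi_q(u):=\frac1q\|\nabla u\|_q^q$, so that on $W$
$$
J(u)=\Phi_2(u)+\Phi_q(u)-\int_{\mathbb{R}^N}F(u)\,dx+\int_{\mathbb{R}^N}A(u)\,dx .
$$
The kinetic terms $\Phi_2$ and $\Phi_q$ are classically $\mathcal{C}^1$ on $H^1(\mathbb{R}^N)$ and $D^{1,q}(\mathbb{R}^N)$, with $\Phi_2'(u)v=\int_{\mathbb{R}^N}\nabla u\cdot\nabla v\,dx$ and $\Phi_q'(u)v=\int_{\mathbb{R}^N}|\nabla u|^{q-2}\nabla u\cdot\nabla v\,dx$, hence of class $\mathcal{C}^1$ on $W\hookrightarrow X$. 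For the Orlicz term, assumption $(A)$ guarantees, by the Orlicz calculus recalled before Lemma \ref{LemmaA(u)conver} (see \cite{Adams2003Sobolev,Rao1985Orlicz}), that $u\mapsto\int_{\mathbb{R}^N}A(u)\,dx$ is $\mathcal{C}^1$ on $V\supset W$ with derivative $v\mapsto\int_{\mathbb{R}^N}a(u)v\,dx$; this integral is finite by the H\"older inequality in Orlicz spaces, since $a(u)$ belongs to the space conjugate to $V$, as $\widetilde A(a(s))\le s a(s)\le C A(s)$ by \eqref{Eqdetla}. Finally, for $\Psi_F(u):=\int_{\mathbb{R}^N}F(u)\,dx$, integrating the bound $|f(s)|\lesssim a(s)+|s|+|s|^{q'-1}$ from $(f_2)$ (respectively the exponential bound when $N=2$) yields $|F(s)|\lesssim A(|s|)+|s|^2+|s|^{q'}$, so $F(u)\in L^1(\mathbb{R}^N)$ for $u\in W$ by the continuous embedding $X\hookrightarrow L^m(\mathbb{R}^N)$, $m\in[2,q']$, of Lemma \ref{LemmaEmbed}. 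G\^ateaux differentiability of $\Psi_F$, with $\Psi_F'(u)v=\int_{\mathbb{R}^N}f(u)v\,dx$, follows from dominated convergence, and continuity of $u\mapsto f(u)$ as a map from $W$ into $W^*$ from the usual Nemytskii-operator argument (a.e. convergence along subsequences plus an equi-integrability bound, the latter supplied for $N=2$ by the Trudinger--Moser inequality of Lemma \ref{LemmaTrudinger-Moser}). Adding up, $J|_W\in\mathcal{C}^1(W)$ and
$$
(J|_W)'(u)v=\int_{\mathbb{R}^N}\nabla u\cdot\nabla v\,dx+\int_{\mathbb{R}^N}|\nabla u|^{q-2}\nabla u\cdot\nabla v\,dx-\int_{\mathbb{R}^N}g(u)v\,dx,\qquad v\in W .
$$

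For the second assertion, observe that $\Theta(u):=\|u\|_2^2$ is $\mathcal{C}^1$ on $W$ with $\Theta'(u)v=2\int_{\mathbb{R}^N}uv\,dx$, and $\Theta'(u)u=2c^2\neq0$ for $u\in\mathcal{S}(c)$; thus $c^2$ is a regular value of $\Theta$ and $W\cap\mathcal{S}(c)$ is a $\mathcal{C}^1$ Banach submanifold of $W$ of codimension one. By the Lagrange multiplier rule, $(J|_{W\cap\mathcal{S}(c)})'(u)=0$ holds if and only if there is $\lambda\in\mathbb{R}$ with $(J|_W)'(u)v=-\lambda\int_{\mathbb{R}^N}uv\,dx$ for every $v\in W$. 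Since $\mathcal{C}_0^\infty(\mathbb{R}^N)\subset W$, restricting this identity to test functions $v\in\mathcal{C}_0^\infty(\mathbb{R}^N)$ is exactly the statement that $(u,\lambda)$ solves \eqref{Equation} (the formal expression $J'(u)v$ used in the Introduction coincides, for $u\in W$, with $(J|_W)'(u)v$ computed above); this gives the ``only if'' direction. For the ``if'' direction, the key point is that $\mathcal{C}_0^\infty(\mathbb{R}^N)$ is dense in $W$: given $u\in W$, the truncations $u^{(k)}:=\max\{-k,\min\{k,u\}\}$ converge to $u$ in $X$ by dominated convergence and in $V$ by Lemma \ref{LemmaA(u)conver}(iii), since $A(u^{(k)})\le A(u)$ and $A(u^{(k)})\to A(u)$ a.e.; multiplying the bounded function $u^{(k)}$ by a smooth cutoff equal to $1$ on $B_R$ and supported in $B_{2R}$ and then mollifying keeps it uniformly bounded and compactly supported, so the relevant Orlicz integrals still converge by dominated convergence (convergence in $X$ being classical), and a diagonal argument yields the density. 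Consequently, if $(u,\lambda)$ solves \eqref{Equation} with $u\in W\cap\mathcal{S}(c)$, the continuous functional $(J|_W)'(u)+\tfrac{\lambda}{2}\Theta'(u)\in W^*$ vanishes on the dense subspace $\mathcal{C}_0^\infty(\mathbb{R}^N)$, hence identically, which is precisely $(J|_{W\cap\mathcal{S}(c)})'(u)=0$.

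I expect the main technical difficulty to be the $\mathcal{C}^1$ regularity of $\Psi_F$ in the planar case $N=2$: there the exponential growth allowed by $(f_2)$--$(f_3)$ forces one to combine the Trudinger--Moser inequality with H\"older's inequality to control $f(u)v$ and to establish continuity of the associated Nemytskii operator, along the lines of the estimates already carried out in the proof of Lemma \ref{Lem3-ae}. By contrast, the density of $\mathcal{C}_0^\infty(\mathbb{R}^N)$ in $W$ is routine, but it does require the combined truncation--cutoff--mollification scheme above, precisely because of the Orlicz component of the $W$-norm.
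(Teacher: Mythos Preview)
Your proposal is correct and follows essentially the same route as the paper. The paper's own proof is extremely terse: it records the Young identity $B(a(s))=sa(s)-A(s)\le (C-1)A(s)$ to place $a(u)$ in $V^*$, then delegates the $\mathcal{C}^1$ regularity of both $u\mapsto\int A(u)\,dx$ and $u\mapsto\int F(u)\,dx$ to \cite[Lemma~2.1]{Clement2000mpt}, and dismisses the entire Lagrange-multiplier equivalence with ``the remaining proof is obvious''. Your version unpacks exactly these ingredients---the Orlicz H\"older bound, the Nemytskii-operator argument for $\Psi_F$, and the Lagrange rule on the codimension-one manifold $W\cap\mathcal{S}(c)$---and in addition supplies the density of $\mathcal{C}_0^\infty(\mathbb{R}^N)$ in $W$ via truncation--cutoff--mollification, which the paper tacitly assumes for the ``if'' direction; this step is genuinely needed because the paper's definition of solution tests only against $\mathcal{C}_0^\infty$.
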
	
	
	\begin{proof}
		Let $B: \mathbb{R} \rightarrow \mathbb{R}$ be the complementary {of the}  $N$-function of $A$. From \cite[Theorem II.V.8]{Rao1985Orlicz}, {the
function $B$} satisfies the $\Delta_2$ and $\nabla_2$ conditions globally as $A$ also satisfies these conditions. Let us recall that $V^{*}$, the dual space of $V$, is isomorphic to the Orlicz space associated with $B$(see \cite[Definition III.IV.2, Corollary III.IV.5, and Corollary IV.II.9]{Rao1985Orlicz}). From \cite[Theorem I.III.3]{Rao1985Orlicz}, we have
		$$
		B(a(s))=s a(s)-A(s) \leq(C-1) A(s),
		$$
		where $C>1$ is the constant given in the characterization of the $\Delta_2$ condition \eqref{Eqdetla}. Then, for every $u, v \in V$, we have
		$$
		\left|\int_{\mathbb{R}^N} a(u) v d x\right| \leq \int_{\mathbb{R}^N}|a(u)||v| d x \lesssim\|a(u)\|_{V^{*}}\|v\|_V .
		$$
Now we {proceeding as in the proof of Lemma~2.1 in \cite{Clement2000mpt}, we} obtain that,  under
the assumptions $(f_1)-(f_3)$,
both functionals $u \mapsto\int_{\mathbb{R}^N} A(u) d x$
and $u \mapsto \int_{\mathbb{R}^N} F(u) d x$  are of
class $\mathcal{C}^1(V) \subset \mathcal{C}^1(W)$. {The remaining proof is obvious, and we omit it.}
	\end{proof}	

To prove Theorem \ref{thm2muty}, we need some important
{preliminary results. As usually, we adopt the
notations} $F_{-}:=F_{+}-F$ and $f_{-}:=F_{-}^{\prime}$.

	\begin{Lemma}\label{EqA(u)bounded}
		If $(A), (f_0)-(f_3)$, and \eqref{etaC} hold, then $\left.J\right|_{W \cap \mathcal{S}(c)}$ is bounded below.
	\end{Lemma}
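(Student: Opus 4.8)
The plan is to follow the scheme of Lemma~\ref{LemmaBoundedbelow} almost verbatim, the only genuinely new features being the summand $\int_{\mathbb{R}^N}A(u)\,dx$—which, by $(A)$, is nonnegative and hence only helps—and the fact that the constant $\eta$ of \eqref{eqeta} is fixed rather than an arbitrarily small parameter, so that the absorption of the mass-critical term rests on the structural hypothesis \eqref{etaC} instead of on a free choice of $\delta$. First I would rewrite $J$ on $W\cap\mathcal{S}(c)$ by means of $g=f-a$, i.e. $G=F-A$, as
$$
J(u)=\frac12\|\nabla u\|_2^2+\frac1q\|\nabla u\|_q^q+\int_{\mathbb{R}^N}A(u)\,dx-\int_{\mathbb{R}^N}F(u)\,dx .
$$
Writing $F=F_{+}-F_{-}$ with $F_{-}=F_{+}-F\ge0$ and using $A\ge0$, this gives immediately
$$
J(u)\ \ge\ \frac12\|\nabla u\|_2^2+\frac1q\|\nabla u\|_q^q-\int_{\mathbb{R}^N}F_{+}(u)\,dx .
$$

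Next I would record the pointwise growth estimate on $F_{+}$: by $(f_1)$ one has $F_{+}(s)=o(|s|^2)$ as $s\to0$; by $(f_3)$ one has $F_{+}(s)\le(\eta+\delta)|s|^{\bar q}$ for $|s|$ large, where $\delta>0$ is the constant appearing in \eqref{etaC}; and on the remaining compact range $F_{+}$ is bounded while $|s|$ is bounded away from $0$. Hence there is $C_\delta>0$ with $F_{+}(s)\le C_\delta|s|^2+(\eta+\delta)|s|^{\bar q}$ for all $s\in\mathbb{R}$, and integrating with $\|u\|_2^2=c^2$ on $\mathcal{S}(c)$ yields $\int_{\mathbb{R}^N}F_{+}(u)\,dx\le C_\delta c^2+(\eta+\delta)\|u\|_{\bar q}^{\bar q}$. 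When $\frac{2N}{N+2}<q<2$ one has $\max\{2,q\}=2$ and (a direct computation, carried out in the proof of Lemma~\ref{LemmaBoundedbelow}) $\bar q\delta_{\bar q}=2$, so the Gagliardo--Nirenberg inequality of Lemma~\ref{LemmaGN} gives $\|u\|_{\bar q}^{\bar q}\le C_{N,\bar q}^{\bar q}\,c^{\bar q(1-\delta_{\bar q})}\|\nabla u\|_2^2$, whence
$$
J(u)\ \ge\ \left(\frac12-(\eta+\delta)C_{N,\bar q}^{\bar q}c^{\bar q(1-\delta_{\bar q})}\right)\|\nabla u\|_2^2+\frac1q\|\nabla u\|_q^q+\int_{\mathbb{R}^N}A(u)\,dx-C_\delta c^2 .
$$
By \eqref{etaC} the coefficient of $\|\nabla u\|_2^2$ is strictly positive, and since the remaining three terms are nonnegative we get $J(u)\ge-C_\delta c^2$ on $W\cap\mathcal{S}(c)$. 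When $2<q<N$ one argues identically, using instead the $L^q$-Gagliardo--Nirenberg inequality of Lemma~\ref{Lemma-GN inequality2} and the identity $\bar q\nu_{\bar q,q}=q$, so that the mass-critical term is absorbed into $\frac1q\|\nabla u\|_q^q$ under the corresponding smallness condition.

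None of the steps is really delicate once $\bar q$ is recognised as the mass-critical exponent, which is exactly what forces the Gagliardo--Nirenberg exponent on the gradient to equal $2$ (resp. $q$), so that absorption is possible at all. The one point to be careful about is the passage from $(f_3)$ to the pointwise bound on $F_{+}$: the constant $\eta$ must be carried along unchanged, not merged into the arbitrary $\delta$, because it is precisely the size of $\eta$—through \eqref{etaC}—that decides whether the leading quadratic (resp. $q$-th power) gradient term survives with a positive coefficient, i.e. whether the estimate closes.
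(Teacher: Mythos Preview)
Your proof is correct and follows essentially the same scheme as the paper's own argument: rewrite $J$ via $G=F-A$, drop the nonnegative $F_-$ term, use $(f_1)$ and $(f_3)$ to get $F_+(s)\le C_\delta|s|^2+(\eta+\delta)|s|^{\bar q}$, and then absorb the $\|u\|_{\bar q}^{\bar q}$ term into the gradient norm via Gagliardo--Nirenberg and condition~\eqref{etaC}. In fact you are slightly more careful than the paper, which writes only the $L^2$-Gagliardo--Nirenberg estimate and does not explicitly treat the case $2<q<N$ (where $\bar q\delta_{\bar q}>2$); your use of Lemma~\ref{Lemma-GN inequality2} and the identity $\bar q\,\nu_{\bar q,q}=q$ in that range, mirroring Lemma~\ref{LemmaBoundedbelow}, is the natural fix.
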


	\begin{proof}
From \eqref{eqeta}, for every $\delta>0$, there exists $C_\delta>0$ such that for every $s \in \mathbb{R}$
		$$
		F_{+}(s) \leq C_\delta|s|^2+(\eta+\delta)|s|^{\bar{q}} .
		$$
{Hence,} for every $u \in W \cap \mathcal{S}(c)$ there holds
$$
		\begin{aligned}
			J(u) & \geq \int_{\mathbb{R}^N}\left( \frac{1}{2}|\nabla u|^2+\frac{1}{q}|\nabla u|^q+A(u)-F_{+}(u) \right) d x \\
			& \geq \int_{\mathbb{R}^N} \left(\frac{1}{2}|\nabla u|^2+\frac{1}{q}|\nabla u|^q+A(u)-C_\delta|u|^2-(\eta+\delta)|u|^{\bar{q}} \right)  d x \\
			& \geq \frac{1}{2}\|\nabla u\|_2^2+\frac{1}{q}\|\nabla u\|_q^q+\int_{\mathbb{R}^N} A(u) d x-(\eta+\delta) C_{N, \bar{q}}^{\bar{q}} c^{\bar{q} \left(1-\delta_{\bar{q}}\right)}\|\nabla u\|_2^{\bar{q} \delta_{\bar{q}}} .
		\end{aligned}
		$$
Taking $\delta$ {so small  that} $2(\eta+\delta) C_{N, \bar{q}}^{\bar{q}} c^{\bar{q} \left(1-\delta_{\bar{q}}\right)} <1$,  we conclude that $J(u)$ bounded from below on $W \cap \mathcal{S}(c)$, {since $A(s)$ is nonnegative.}
	\end{proof}

	\begin{remark}\label{ref+}
{\rm		Setting $f_{\mathrm{p}}:=\max \{f, 0\}$ and $f_{\mathrm{n}}:=\max \{-f, 0\}$, {we see that}
		\begin{equation*}
			F_{+}(s)= \begin{cases}\int_0^s \max \{f(t), 0\} d t \quad \text { if } s \geq 0, \\ \int_s^0 \max \{-f(t), 0\} d t \quad \text { if } s<0 .\end{cases}
		\end{equation*}
{Hence, being}  $f_{+}(s)=F_{+}^{\prime}(s)$, $f_{-}(s):=f_{+}(s)-f(s)$, and $F_{-}(s):=F_{+}(s)-F(s) \geq 0$ for $s \in \mathbb{R}$,
{it holds} $f_{-}(s)=\max \{-f(s), 0\}=f_{\mathrm{n}}(s)$ if $s \geq 0$ and $f_{-}(s)=-\max \{f(s), 0\}=-f_{\mathrm{p}}(s)$ if $s<0$. {Therefore,} we conclude that, $f_{-}(s) s \geq 0$ for every $s \in \mathbb{R}$.}
	\end{remark}
	
	\begin{Lemma}\label{Lema(s)}
		If $(A)$ {holds}, then {$s\mapsto a(s)s$} is nonnegative for all $s \in \mathbb{R}$.
	\end{Lemma}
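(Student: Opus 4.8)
The plan is to obtain the claim directly from the structural properties of the $N$-function $A$ contained in assumption $(A)$, with no real analytic work needed. First I would record what $(A)$ gives us about $A$ itself: being an $N$-function, $A:\mathbb{R}\to[0,\infty)$ is even, convex, of class $\mathcal{C}^1$ with $a=A'$, and $A(0)=0$ (the last fact because $A$ is continuous and $\lim_{s\to 0}A(s)/s=0$). In particular $A(s)\ge 0$ for every $s\in\mathbb{R}$.

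Then the key step is to use the tangent-line (subgradient) inequality for the convex function $A$: for all $s,t\in\mathbb{R}$ one has $A(t)\ge A(s)+a(s)(t-s)$. Taking $t=0$ yields $A(0)\ge A(s)-s\,a(s)$, i.e.
$$
s\,a(s)\ \ge\ A(s)-A(0)\ =\ A(s)\ \ge\ 0\qquad\text{for every }s\in\mathbb{R},
$$
which is precisely the assertion. As an alternative route I could invoke the $\nabla_2$ characterization recalled in \eqref{Eqdetla}, namely $s\,a(s)\ge C\,A(s)$ for some $C>1$, and again use $A\ge 0$ to conclude; both arguments are one line.

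There is essentially no obstacle in this lemma: the statement is an immediate consequence of $A$ being a convex $N$-function vanishing at the origin, and the only point requiring a word of justification is the identity $A(0)=0$, which is built into the definition of an $N$-function. The lemma is stated here only because the sign property $a(s)s\ge 0$ is used repeatedly afterwards (e.g. in the lower bounds for $J$ on $W\cap\mathcal{S}(c)$ and in the verification of the Palais–Smale condition), so it is convenient to isolate it.
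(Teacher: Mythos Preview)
Your proof is correct and is in fact cleaner than the paper's. The paper argues by contradiction: it uses the extra hypothesis in $(A)$ that $s\mapsto a(s)s$ is convex, assumes $a(s_1)s_1<0$ for some $s_1>0$, finds $s_2\in(0,s_1)$ with $a(s_2)s_2>0$ (otherwise $A(s_1)<0$), and then derives a contradiction from the secant-slope inequality for the convex function $s\mapsto a(s)s$ between $0$, $s_2$, and $s_1$. Your argument instead uses only the convexity of $A$ itself (built into the definition of an $N$-function) via the tangent-line inequality $A(0)\ge A(s)-s\,a(s)$, together with $A(0)=0$ and $A\ge 0$. This is a one-line direct proof, avoids the contradiction structure, and does not require the additional convexity of $s\mapsto a(s)s$; it therefore shows that this specific part of assumption $(A)$ is not needed for the lemma. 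Your alternative via the $\nabla_2$ characterization \eqref{Eqdetla} is equally valid and equally short.
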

	
	\begin{proof}
		Since {$A$} is an even function, {$s\mapsto a(s)s$ also is an} even function. {Thus,}
 we only need to {prove} that {$s\mapsto a(s)s$} is nonnegative on $\left(0,\infty\right)$.
		
		Suppose, by contradiction, {that
there exists} $s_1>0$, such that $a(s_1)s_1<0$,
then there {exists}
		$s_2 \in \left(0,s_1\right)$ such that $a(s_2)s_2>0$. {Otherwise} $a(s)\leq 0$ on $(0, s_1)$, and thus $A(s_1)<0$, {which
is a contradiction}. Since {$s\mapsto a(s)s$} is convex
{in $\mathbb R$ by$(A)$, the following inequality
holds}
		$$
		\frac{a(0) 0-a\left(s_1\right) s_1}{0-s_1} \geq \frac{a(0) 0-a\left(s_2\right) s_2}{0-s_2}.
		$$
{Hence, $a(s_1)\geq a(s_2)$, which is the required} contradiction.
	\end{proof}
	
	\begin{Lemma}\label{LemA(u)PS}
		If $(A)$ and $(f_0)-(f_3)$ hold, then $\left.J\right|_{{W_{\mathcal{O}}} \cap \mathcal{S}(c)}$ satisfies the Palais-Smale condition.
	\end{Lemma}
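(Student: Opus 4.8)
The plan is to take a Palais--Smale sequence $(u_n)_n\subset W_{\mathcal O}\cap\mathcal S(c)$ for $\left.J\right|_{W_{\mathcal O}\cap\mathcal S(c)}$ at a level $d$, show it is bounded in $W_{\mathcal O}$, extract a weak limit $u$, and then upgrade weak to strong convergence. Since $u\mapsto\|u\|_2^2$ is of class $\mathcal C^1$ on $W_{\mathcal O}$ with non-vanishing differential on $\mathcal S(c)$ and $\left.J\right|_{W_{\mathcal O}}$ is of class $\mathcal C^1$ by the previous proposition, the Lagrange multiplier rule (cf. \cite[Proposition~5.12]{Willem1996Minimax}) provides $\lambda_n\in\mathbb R$ with $J'(u_n)+\lambda_n u_n\to 0$ in $W_{\mathcal O}^{*}$. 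Arguing as in the proof of Lemma~\ref{EqA(u)bounded}, the functional $J$ controls from below a positive combination of $\|\nabla u_n\|_2^2$, $\|\nabla u_n\|_q^q$ and $\int_{\mathbb R^N}A(u_n)\,dx$; as $\|u_n\|_2=c$ and, by Lemma~\ref{LemmaA(u)conver}(ii), boundedness of $\int_{\mathbb R^N}A(u_n)\,dx$ is equivalent to boundedness of $\|u_n\|_V$, the sequence $(u_n)_n$ is bounded in $W_{\mathcal O}$. Testing the above relation with $u_n$, using $a(u_n)u_n\le C\,A(u_n)$ (the $\Delta_2$-characterization \eqref{Eqdetla}), the continuous embedding $X\hookrightarrow L^{q'}(\mathbb R^N)$ of Lemma~\ref{LemmaEmbed}, the $\bar q$-subcritical growth of $f_+$ coming from $(f_1)$ and $(f_3)$, and $f_-(u_n)u_n\ge0$ from Remark~\ref{ref+}, one sees that $(\lambda_n)_n$ is bounded. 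Passing to a subsequence we get $u_n\rightharpoonup u$ in $W_{\mathcal O}$, $\lambda_n\to\lambda$, $u_n\to u$ a.e.\ in $\mathbb R^N$, and, applying Corollary~\ref{CorLoc} to $u_n-u$ (which tends to $0$ in $L^2_{\rm loc}$ by Lemma~\ref{LemmaEmbed}), $u_n\to u$ in $L^m(\mathbb R^N)$ for every $m\in[2,q')$; in particular $\|u\|_2=c$, so $u\in W_{\mathcal O}\cap\mathcal S(c)$ and $u\ne0$.

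Next I would test the relation $J'(u_n)+\lambda_n u_n\to 0$ against the bounded sequence $u_n-u$. Writing $g=f_+-h$ with $h:=a+f_-$, so that $h(s)s\ge 0$ for all $s$ by Lemma~\ref{Lema(s)} and Remark~\ref{ref+}, and using $\nabla u_n\rightharpoonup\nabla u$ in $L^2$ and in $L^q$ to discard the cross terms, $\lambda_n\int_{\mathbb R^N}u_n(u_n-u)\,dx=o_n(1)$ (since $u_n\to u$ in $L^2$), and $\int_{\mathbb R^N}f_+(u_n)(u_n-u)\,dx=o_n(1)$ (because $(f_1)$ and $(f_3)$ give $|f_+(s)|\lesssim|s|+|s|^{\bar q-1}$ with $2<\bar q<q'$, and $u_n\to u$ in $L^2\cap L^{\bar q}$), the identity reduces to
\[
o_n(1)=\|\nabla(u_n-u)\|_2^2+\int_{\mathbb R^N}\bigl(|\nabla u_n|^{q-2}\nabla u_n-|\nabla u|^{q-2}\nabla u\bigr)\cdot\nabla(u_n-u)\,dx+\int_{\mathbb R^N}h(u_n)(u_n-u)\,dx+o_n(1).
\]
The first summand is $\ge 0$, the second is $\ge 0$ by Lemma~\ref{Remarkpost}, and for the third one has $\liminf_n\int_{\mathbb R^N}h(u_n)(u_n-u)\,dx\ge 0$: indeed $\int_{\mathbb R^N}h(u_n)u\,dx\to\int_{\mathbb R^N}h(u)u\,dx$ by Vitali's theorem, the family $\{h(u_n)u\}_n$ being uniformly integrable and tight (the term $a(u_n)u$ via the Orlicz--Hölder inequality together with $\|a(u_n)\|_{V^{*}}$ bounded and the absolute continuity of $E\mapsto\int_E A(u)\,dx$, the remaining terms via Hölder in $L^2$ and $L^{q'}$ with $u$ fixed), while $\liminf_n\int_{\mathbb R^N}h(u_n)u_n\,dx\ge\int_{\mathbb R^N}h(u)u\,dx$ by Fatou's lemma, since $h(u_n)u_n\ge 0$ and $h(u_n)u_n\to h(u)u$ a.e. Hence each of the three nonnegative quantities tends to $0$: thus $\nabla u_n\to\nabla u$ in $L^2$, and $\nabla u_n\to\nabla u$ in $L^q$ by Lemma~\ref{LemmaConverae} applied with $a(x,\xi)=|\xi|^{q-2}\xi$, so that $u_n\to u$ in $X$.

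Finally, from the previous step $\int_{\mathbb R^N}h(u_n)(u_n-u)\,dx\to0$, hence $\int_{\mathbb R^N}h(u_n)u_n\,dx\to\int_{\mathbb R^N}h(u)u\,dx$; since $a(u_n)u_n$ and $f_-(u_n)u_n$ are nonnegative and converge a.e.\ to $a(u)u$ and $f_-(u)u$, a Fatou sandwich yields $\int_{\mathbb R^N}a(u_n)u_n\,dx\to\int_{\mathbb R^N}a(u)u\,dx$. Because $\widetilde A(s):=a(s)s$ is, by $(A)$ and Lemma~\ref{Lema(s)}, a convex even nonnegative function with $\widetilde A\simeq A$, it is an $N$-function satisfying $\Delta_2$ and $\nabla_2$ globally, so Lemma~\ref{LemmaA(u)conver}(iii) applied to $\widetilde A$ (whose Orlicz norm is equivalent to $\|\cdot\|_V$) gives $\|u_n-u\|_V\to0$. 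Combined with $u_n\to u$ in $X$ this gives $\|u_n-u\|_W\to0$, which is the claimed Palais--Smale condition. The main obstacle, and the point that dictates this route, is that $(f_2)$ constrains only the repulsive part $f_+$ to be $\bar q$-subcritical, while the attractive part $f_-$ (and the defocusing term $a$) may grow at the critical rate $|s|^{q'-1}$, so no direct passage to the limit in $\int_{\mathbb R^N}f_-(u_n)(u_n-u)\,dx$ is available; this is circumvented by keeping $h(s)s\ge0$, using the strict monotonicity of the $(2,q)$-Laplacian through Lemma~\ref{Remarkpost} to force each nonnegative piece of the tested identity to vanish, and recovering the Orlicz convergence through the auxiliary $N$-function $\widetilde A(s)=a(s)s$.
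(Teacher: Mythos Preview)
Your proof is correct, but it follows a different route from the paper's argument.

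The paper does not test against $u_n-u$. Instead, after extracting $\lambda_n\to\lambda$ it passes to the limit in \eqref{EqA(u)lambda} to see that $(u,\lambda)$ solves \eqref{Equation}, and then compares the \emph{Nehari identities} for $u_n$ and for $u$:
\[
\int_{\mathbb R^N}\!\Bigl(|\nabla u_n|^2+|\nabla u_n|^q+a(u_n)u_n+f_-(u_n)u_n\Bigr)dx
=\int_{\mathbb R^N}\!\Bigl(f_+(u_n)u_n-\lambda_n|u_n|^2\Bigr)dx+o_n(1).
\]
Since the right-hand side converges to $\int f_+(u)u-\lambda|u|^2$ (by $u_n\to u$ in $L^2\cap L^{\bar q}$ and $\lambda_n\to\lambda$), and this equals the left-hand side with $u$ in place of $u_n$, a Fatou argument forces each nonnegative summand on the left to converge; in particular $\int a(u_n)u_n\to\int a(u)u$. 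The paper then uses the $\nabla_2$ inequality $A(s)\le C^{-1}a(s)s$ together with a generalized dominated convergence (the dominating sequence $a(u_n)u_n$ converges in $L^1$ by Scheff\'e) to get $\int A(u_n)\to\int A(u)$, and concludes via Lemma~\ref{LemmaA(u)conver}(iii).

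Your approach bypasses the limit equation entirely: testing the PS relation against $u_n-u$ and exploiting the monotonicity in Lemma~\ref{Remarkpost} gives directly $\nabla u_n\to\nabla u$ in $L^2\cap L^q$ and $\int h(u_n)(u_n-u)\to 0$, from which the Fatou sandwich yields $\int a(u_n)u_n\to\int a(u)u$. Introducing the auxiliary $N$-function $\widetilde A(s)=a(s)s$ is equivalent to the paper's DCT step (since $\widetilde A\simeq A$ by the $\Delta_2$/$\nabla_2$ characterizations), just packaged differently. What you gain is that you never need to know that $(u,\lambda)$ solves the equation, only weak convergence; what the paper's argument gains is that the key identity is a single Nehari line rather than a Vitali verification for $\int h(u_n)u$. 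Both rely, at the crucial point, on the same sign information $a(s)s\ge 0$ and $f_-(s)s\ge 0$ and on the subcritical growth of $f_+$ from $(f_1)$, $(f_3)$. One small remark: your closing sentence attributes the $\bar q$-subcriticality of $f_+$ to $(f_2)$; it is $(f_1)$ and $(f_3)$ that give $|f_+(s)|\lesssim|s|+|s|^{\bar q-1}$, while $(f_2)$ controls the full $f$ (hence $f_-$) by $|a(s)|+|s|+|s|^{q'-1}$.
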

	\begin{proof}
		Let {$(u_n)_n$ be a sequence in}
  $W_{{\mathcal{O}}} \cap \mathcal{S}(c)$ such that {$(J\left(u_n\right))_n$ is bounded in $\mathbb R$} and $\left.J\right|_{W_{\mathcal{O}} \cap \mathcal{S}(c)} ^{\prime}\left(u_n\right) \rightarrow 0$
  {as $n\to\infty$}. From
  Lemma~\ref{EqA(u)bounded} and Lemma~\ref{LemmaA(u)conver}
  (ii), {the sequence $(u_n)_n$} is bounded in $W$. {Therefore,}
   there exists $u \in W_{\mathcal{O}}$ such that $u_n
  \rightharpoonup u$ in $W_{\mathcal{O}}$, up to a subsequence. Then,  $u_n \rightarrow u$ in
  $L^m\left(\mathbb{R}^N\right)$ for every $m \in\left[2,q^{\prime}\right)$ {by
   Corollary~\ref{CorLoc}.
In} particular, $u \in \mathcal{S}(c)$. Up to a  subsequence again, we can assume that $u_n \rightarrow u$ for a.e.
{in} $ \mathbb{R}^N$. Additionally, from \cite[{Lemma~3}]{Berestycki1983infinite}, there
{exists a sequence $(\lambda_n)_n$ in} $\mathbb{R}$ such that
		\begin{equation}\label{EqA(u)lambda}
			-\Delta u_n+\lambda_n u_n-g\left(u_n\right)  \rightarrow 0 \quad \text { in } W_{\mathcal{O}}^{*},
		\end{equation}
		where $W_{\mathcal{O}}^{*}$ is the dual space of $W_{\mathcal{O}}$. Testing \eqref{EqA(u)lambda} with {$(u_n)_n$}, we obtain that {$(\lambda_n)_n$} is bounded as well. {Hence,} there exist $\lambda \in \mathbb{R}$ such that $\lambda_n \rightarrow \lambda$ up to a subsequence, and $(u, \lambda)$ is a solution {of}~\eqref{Equation}. Finally, from the Nehari identity and the fact that $\lambda_n \rightarrow \lambda$, we obtain
{that}
		$$
		\begin{aligned}
			\int_{\mathbb{R}^N} \left( |\nabla u|^2+|\nabla u|^q+a(u) u+f_{-}(u) u \right)d x & =\int_{\mathbb{R}^N}\left(f_{+}(u) u-\lambda|u|^2\right)  d x\\
			&=\lim _{n \rightarrow\infty} \int_{\mathbb{R}^N}\left( f_{+}\left(u_n\right) u_n-\lambda\left|u_n\right|^2 \right)d x \\
			& =\lim _{n \rightarrow \infty} \int_{\mathbb{R}^N}
			\left(\left|\nabla u_n\right|^2+|\nabla u|^q+a\left(u_n\right) u_n+f_{-}\left(u_n\right) u_n \right) d x,
		\end{aligned}
		$$
{Now,} $f_{-}(s) s \geq 0$ for every
		$s \in \mathbb{R}$ 		by Remark~\ref{ref+}, {
so that} by Fatou's lemma
		$$\limsup _{n \rightarrow \infty} \int_{\mathbb{R}^N} f\left(u_n\right) u_n d x \geq \int_{\mathbb{R}^N} f(u) u d x.$$
{Similarly, $a(s)s\geq0$ for every
		$s \in \mathbb{R}$ 	by $(A)$ and by Lemma \ref{Lema(s)}, so that}
$$\limsup _{n \rightarrow \infty} \int_{\mathbb{R}^N} a\left(u_n\right) u_n d x \geq \int_{\mathbb{R}^N} a(u) u d x.$$
{Additionally,} since $u_n \rightharpoonup u$ in $W_{\mathcal{O}}$,
		$$\liminf _{n \rightarrow \infty} \left\|\nabla u_n\right\|_2^2 \geq \|\nabla u\|_2^2
		\quad
		\text{and} \quad
		\liminf _{n \rightarrow \infty} \left\|\nabla u_n\right\|_q^q \geq \|\nabla u\|_q^q.$$
		The above  {properties imply} that, up to subsequences, as $n \rightarrow \infty$
\begin{gather*}\left\|\nabla u_n\right\|_2^2 \rightarrow\|\nabla u\|_2^2,\quad \left\|\nabla u_n\right\|_q^q \rightarrow\|\nabla u\|_q^q,\\
\int_{\mathbb{R}^N} f\left(u_n\right) u_n d x \rightarrow
 \int_{\mathbb{R}^N} f(u) u d x\quad\mbox{and}\quad
\int_{\mathbb{R}^N} a\left(u_n\right) u_n d x \rightarrow \int_{\mathbb{R}^N} a(u) u d x.\end{gather*}
It remains to prove that $\left\|u_n-u\right\|_V \rightarrow 0$. In virtue of Lemma \ref{LemmaA(u)conver} (iii), it suffices to {show} that $\int_{\mathbb{R}^N} A\left(u_n\right) d x \rightarrow \int_{\mathbb{R}^N} A(u) d x$. Additionally, since $A$ satisfies the $\nabla_2$ condition globally, we have that
		$s a(s) \geq C A(s) $.
		Since $\int_{\mathbb{R}^N} a\left(u_n\right) u_n d x \rightarrow \int_{\mathbb{R}^N} a(u) u d x$, by a variant of the Lebesgue Dominated Convergence theorem, we have that  $\int_{\mathbb{R}^N} A\left(u_n\right) d x \rightarrow \int_{\mathbb{R}^N} A(u) d x$.
	\end{proof}
	\begin{Lemma}\label{LemGamma}
		For every $k \geq 1$ there exist
{the functions}
$\gamma_k: \mathbb{S}^{k-1} \rightarrow W_{\mathcal{O}(N)}
\cap \mathcal{S}(c)$ and $\tilde{\gamma}_k: \mathbb{S}^{k-1}
\rightarrow  \mathcal{X} \cap \mathcal{S}(c)$
{which are} odd and continuous.
	\end{Lemma}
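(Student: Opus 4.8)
The plan is to reduce the statement to an elementary construction: for each $k\ge 1$ I would produce $k$ functions in the relevant symmetric subspace, with pairwise disjoint supports and unit $L^2$-norm, and then take the linear combination indexed by $\theta\in\mathbb{S}^{k-1}$, rescaled so that the mass equals $c^2$. Disjoint supports are what makes this work: they force both $\|\cdot\|_2^2$ and the Orlicz integral $\int_{\mathbb{R}^N}A(\cdot)\,dx$ to decouple along the pieces, so the $L^2$-norm of the combination is constant on $\mathbb{S}^{k-1}$ and the combination automatically lies in $W$.

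Concretely, for $\gamma_k$ I would fix radii $0=R_0<R_1<\dots<R_k$ and pick, for $j=1,\dots,k$, a radial function $v_j\in\mathcal{C}_0^\infty(\mathbb{R}^N)$ supported in the annulus $\{R_{j-1}<|x|<R_j\}$ with $\|v_j\|_2=1$. Since the $v_j$ are radial and smooth with compact support they lie in $W_{\mathcal{O}(N)}$, and so does any combination with $|\theta_j|\le 1$. Setting
\[
\gamma_k(\theta):=c\sum_{j=1}^k\theta_j v_j,\qquad\theta=(\theta_1,\dots,\theta_k)\in\mathbb{S}^{k-1},
\]
the disjointness of the supports gives $\|\gamma_k(\theta)\|_2^2=c^2\sum_{j=1}^k\theta_j^2=c^2$, hence $\gamma_k(\theta)\in W_{\mathcal{O}(N)}\cap\mathcal{S}(c)$, while continuity and the identity $\gamma_k(-\theta)=-\gamma_k(\theta)$ are immediate. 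One may also regard $\gamma_k$ as a simplified instance of the odd continuous maps built in \cite[Lemma~3]{Berestycki1983infinite}.

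For $\tilde\gamma_k$, when $N=4$ or $N\ge 6$, I would write $x=(x_1,x_2,x_3)\in\mathbb{R}^M\times\mathbb{R}^M\times\mathbb{R}^{N-2M}$ (the third block absent if $N=2M$) and pass to the variables $(|x_1|,|x_2|,|x_3|)$, in which a function invariant under $\mathcal{O}(M)\times\mathcal{O}(M)\times\mathcal{O}(N-2M)$ is a function $\Psi$ of (two or) three nonnegative variables, and the condition $u\circ\tau=-u$ becomes antisymmetry of $\Psi$ under swapping the first two. Choosing smooth bumps $\Phi_j$ supported in $k$ small boxes of $(0,\infty)^3$ placed away from the coordinate hyperplanes and away from the diagonal $\{r=s\}$, and mutually disjoint together with their reflections across $\{r=s\}$, the antisymmetrizations $\Psi_j(r,s,t):=\Phi_j(r,s,t)-\Phi_j(s,r,t)$ lift to functions $u_j\in\mathcal{C}_0^\infty(\mathbb{R}^N)\cap\mathcal{X}$ with pairwise disjoint supports; after normalizing $\|u_j\|_2=1$ I set $\tilde\gamma_k(\theta):=c\sum_{j=1}^k\theta_j u_j$, which is odd, continuous and $\mathcal{X}\cap\mathcal{S}(c)$-valued, consistently with $\mathcal{X}\cap W_{\mathcal{O}(N)}=\{0\}$ since none of the $u_j$ is radial.

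I do not expect a genuine obstacle here; the only point demanding care is the last one: arranging the bumps so as to respect the rotation group $\mathcal{O}(M)\times\mathcal{O}(M)\times\mathcal{O}(N-2M)$ and the antisymmetry $u\circ\tau=-u$ simultaneously, and checking they can be chosen with mutually disjoint supports — which, together with smoothness and compact support, also makes membership in $W$ trivial. Passing to the quotient variables $(|x_1|,|x_2|,|x_3|)$ handles exactly this, and reduces the whole construction to placing finitely many disjoint boxes in an orthant, away from a diagonal hyperplane.
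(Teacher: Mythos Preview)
Your construction is correct and more elementary than the paper's. The paper does not build the $k$ building blocks by hand: it quotes the odd continuous map $\tau_{k,R}:\mathbb{S}^{k-1}\to H^1(\mathbb{R}^N)$ with radial, compactly supported values from \cite[Lemma~3.4]{Jeanjean2019Nonradial} (rooted in \cite{Berestycki1983infinite}) and then normalises the $L^2$-mass by the $\sigma$-dependent dilation $x\mapsto c^{-1/N}\|\tau_k[\sigma]\|_2^{2/N}x$; for $\mathcal{X}$ it antisymmetrises multiplicatively, replacing $\tau_{k,R}[\sigma](x)$ by $\tau_{k,R}[\sigma](x)\,\chi(|x_1|-|x_2|)$ with an odd cutoff $\chi$, and then dilates again. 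Your disjoint-support linear map $\theta\mapsto c\sum_j\theta_j v_j$ (respectively $c\sum_j\theta_j u_j$) sidesteps both the external reference and the nonlinear rescaling: oddness and continuity are immediate, the $L^2$-constraint holds identically, and membership in $W$ is trivial for smooth compactly supported functions. The paper's route is shorter to write because it outsources the construction, and the trick of multiplying a radial function by $\chi(|x_1|-|x_2|)$ is a clean one-line way to land in $\mathcal{X}$; your route is self-contained, avoids having to check that $\|\tau_k[\sigma]\|_2$ and $\|\pi_k[\sigma]\|_2$ stay bounded away from zero and depend continuously on $\sigma$, and makes the compatibility with the Orlicz term transparent.
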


	\begin{proof}
		Fix $k \in \mathbb{N}$. In view of \cite[Lemma 3.4]{Jeanjean2019Nonradial}, there exist constants $R(k)>2(k+1)$ and $c_k>0$ such that, for any $R \geqslant R(k)$,
		there exists an odd continuous mapping $\tau_{k, R}: \mathbb{S}^{k-1} \rightarrow H^1\left(\mathbb{R}^N\right)$ having the properties that $\tau_{k, R}[\sigma]$ is a radial function, $\operatorname{supp}\left(\tau_{k, R}[\sigma]\right) \subset \bar{B}_R(0)$ for any $\sigma \in \mathbb{S}^{k-1}$. Therefore, $\tau_{k, R}$ is also
		an odd continuous mapping from $\mathbb{S}^{k-1}$ to $W_{\mathcal{O}(N)}$.
		
	Fix $c>0$ and $k \in \mathbb{N}$, we can define an odd continuous mapping $\gamma_k: \mathbb{S}^{k-1} \rightarrow W_{\mathcal{O}(N)} \cap \mathcal{S}(c)$ as follows:
		$$
		{\gamma_{k}}[\sigma](x):=\tau_k[\sigma]\left(c^{-1 / N} \cdot\left\|\tau_k[\sigma]\right\|_{L^2\left(\mathbb{R}^N\right)}^{2 / N} \cdot x\right), \quad x \in \mathbb{R}^N \text { and } \sigma \in \mathbb{S}^{k-1} .
		$$
		Let $\chi: \mathbb{R} \rightarrow[0,1]$ be an odd smooth function such that $\chi(t)=1$ for any $t \geqslant 1$.
{Let us introduce}
		$$
		\pi_{k, R}[\sigma](x):=\tau_{k, R}[\sigma](x) \cdot \chi\left(\left|x_1\right|-\left|x_2\right|\right),
		$$
		where $\sigma \in \mathbb{S}^{k-1}$ and $x=\left(x_1, x_2, x_3\right) \in \mathbb{R}^M \times \mathbb{R}^M \times \mathbb{R}^{N-2 M}$. Clearly, $\pi_{k, R}$ is an odd continuous mapping from $\mathbb{S}^{k-1}$ to $\mathcal{X}$.
		
		Fix $c>0$ and $k \in \mathbb{N}$, we can define an odd continuous mapping $\gamma_{ k}: \mathbb{S}^{k-1} \rightarrow \mathcal{X} \cap \mathcal{S}(c) $ as follows:
		$$
		\bar{\gamma_{k}}[\sigma](x):=\pi_k[\sigma]\left(c^{-1 / N} \cdot\left\|\pi_k[\sigma]\right\|_{L^2\left(\mathbb{R}^N\right)}^{2 / N} \cdot x\right), \quad x \in \mathbb{R}^N \text { and } \sigma \in \mathbb{S}^{k-1},
		$$
{as required.}
	\end{proof}

{In what follows,	
we} make use of the {next} abstract theorem, where $\mathcal{G}$ stands for the Krasnoselsky genus \cite[Chapter 5]{Struwe2008Variational}.

	\begin{theorem}[{\cite[Theorem 2.1]{Jeanjean2019Nonradial}}]\label{Thmmupt}
		Let $E$ be a Banach space, {let $H \subset E$ be}
a Hilbert space with scalar product $(\cdot \mid \cdot)$.
{Fix $R>0$ and put} $\mathcal{M}:=\{u \in E\,:\,(u \mid u)=R\}$. {Asumme that}  $I \in \mathcal{C}^1(E)$
{is an even functional and that } $\left.I\right|_{\mathcal{M}}$ is bounded below. For every $k \geq 1$, {set}
		$$
		c_k:=\inf _{A \in \Gamma_k} \sup _{u \in A} I(u), \quad \Gamma_k:=\{A \subset \mathcal{M}: A=-A=\bar{A} \text { and } \mathcal{G}(A) \geq k\} .
		$$
		Then, for every $k \geq 1$ and $-\infty<c_1 \leq \cdots \leq c_k \leq c_{k+1} \leq \ldots$ the following holds: if there exist $k \geq 1$ and $h \geq 0$ such that $c_k=\cdots=c_{k+h}<\infty$ and $\left.I\right|_{\mathcal{M}}$ satisfies the Palais-Smale condition at the level $c$, then
		$$
		\mathcal{G}\left(\left\{u \in \mathcal{M}: I(u)=c_k \text { and }\left.I\right|_{\mathcal{M}} ^{\prime}(u)=0\right\}\right) \geq h+1
		$$
		(in particular, taking $h=0$, every $c_k$ is a critical value of $\left.I\right|_{\mathcal{M}}$ ).
	\end{theorem}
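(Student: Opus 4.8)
The plan is to reproduce the classical Lusternik--Schnirelmann--Clark argument: an equivariant quantitative deformation lemma on the constraint manifold $\mathcal M$, combined with the standard properties of the Krasnoselskii genus $\mathcal G$ (monotonicity, subadditivity, invariance under odd homeomorphisms, and the neighbourhood property for compact symmetric sets). First I would record the basic structure of $\mathcal M$: since $Q(u):=(u\mid u)$ is $\mathcal C^1$ on $E$ with $Q'(u)[u]=2R\neq0$ on $\mathcal M$, the set $\mathcal M$ is a $\mathcal C^1$ submanifold of $E$ of codimension one, and $0\notin\mathcal M$ because $R>0$; hence $\mathcal G$ is well defined on the closed symmetric subsets of $\mathcal M$ and the minimax values $c_k$ make sense. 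Because $I$ is even and $\mathcal M$ is symmetric, $I|_{\mathcal M}$ is an even $\mathcal C^1$ functional on $\mathcal M$; projecting a pseudo-gradient of $I$ onto the tangent bundle $\ker Q'(\cdot)$ and symmetrizing (replacing $V(u)$ by $\tfrac12(V(u)-V(-u))$) produces an odd, locally Lipschitz, tangent pseudo-gradient field, whose normalized negative flow $\sigma$ is an odd continuous flow on $\mathcal M$.

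Next I would fix the common value $c:=c_k=\dots=c_{k+h}<\infty$ and argue by contradiction, assuming $\mathcal G(K_c)\le h$, where $K_c:=\{u\in\mathcal M:\ I(u)=c,\ (I|_{\mathcal M})'(u)=0\}$. The Palais--Smale condition at level $c$ makes $K_c$ compact, so by the neighbourhood property there is a symmetric open set $O\supset K_c$ in $\mathcal M$ with $\mathcal G(\overline O)\le h$. Integrating the flow $\sigma$ away from $O$, and using the Palais--Smale condition to bound $\|V\|$ from below on $\{u\in\mathcal M: c-2\bar\varepsilon\le I(u)\le c+2\bar\varepsilon\}\setminus O$, one obtains the equivariant quantitative deformation lemma: there exist $\varepsilon\in(0,\bar\varepsilon)$ and an odd homeomorphism $\eta:\mathcal M\to\mathcal M$ such that $\eta(I^{c+\varepsilon}\setminus O)\subset I^{c-\varepsilon}$, where $I^a:=\{u\in\mathcal M: I(u)\le a\}$.

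Finally I would use that $c_{k+h}=c$ to pick $A\in\Gamma_{k+h}$ with $A\subset I^{c+\varepsilon}$; after shrinking $O$ slightly the set $B:=\overline{A\setminus O}$ is closed, symmetric, disjoint from the shrunk neighbourhood, and by subadditivity $\mathcal G(B)\ge\mathcal G(A)-\mathcal G(\overline O)\ge(k+h)-h=k$. Then $\eta(B)$ is closed and symmetric with $\mathcal G(\eta(B))\ge\mathcal G(B)\ge k$, so $\eta(B)\in\Gamma_k$, while $\sup_{\eta(B)}I\le c-\varepsilon<c=c_k$, contradicting the definition of $c_k$ as an infimum. Hence $\mathcal G(K_c)\ge h+1$; specializing to $h=0$ shows that every $c_k$ is a critical value of $I|_{\mathcal M}$.

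The main obstacle is the equivariant deformation on the $\mathcal C^1$ Banach manifold $\mathcal M$: one cannot project orthogonally as in a Hilbert space, so the tangent odd pseudo-gradient must be built by a partition-of-unity construction compatible with the symmetry, everything must stay continuous for the $E$-topology while the constraint is defined through the $H$-inner product, and the lower bound on $\|V\|$ off $O$ has to be extracted from the Palais--Smale condition at the single level $c$ rather than from a global assumption. All of this is by now standard, so I would cite \cite{Jeanjean2019Nonradial} (and the references therein) rather than reproduce it in full.
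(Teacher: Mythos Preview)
Your sketch is the standard Lusternik--Schnirelmann--Clark argument and is essentially correct; note, however, that the paper does not give its own proof of this theorem but simply quotes it from \cite[Theorem~2.1]{Jeanjean2019Nonradial} as a black-box tool, so there is nothing in the paper to compare your proposal against. Since you yourself conclude by saying you would cite \cite{Jeanjean2019Nonradial} rather than reproduce the argument, your proposal is in perfect alignment with what the paper actually does.
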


Hence we can conclude in the following way.
	
\begin{proof} [Proof of Theorem \ref{thm2muty}]
{Take}  $E=W_{\mathcal{O}(N)}$ (respectively,
$E=\mathcal{X}$), $H=L^2\left(\mathbb{R}^N\right)$,
$R=c^2, \mathcal{M}=$ $\mathcal{S}(c) \cap E$, and
$I=\left.J\right|_E$. From Lemmas \ref{EqA(u)bounded}
and \ref{LemA(u)PS} {the functional}
$\left.I\right|_{\mathcal{M}}=J_{\mathcal{S}(c) \cap E}$ is
bounded below and satisfies the Palais-Smale {condition.}
{Moreover,  Lemma \ref{LemGamma} implies that}
$\gamma_k\left(\mathbb{S}^{k-1}\right) \in \Gamma_k$
 (respectively, $\tilde{\gamma}_k\left(\mathbb{S}^{k-1}\right)
 \in \Gamma_k$) for every $k$. {Thus,} the numbers $c_k$
are finite. Applying Theorem \ref{Thmmupt}, we conclude
{there exist  infinitely many solutions of~\eqref{Equation}.}

	Concerning the existence of a least-energy solution, we
 consider a {minimizing sequence $(u_n)_n$ in
  $\mathcal{S}(c) \cap E$} such that $\lim \limits_{n
  \rightarrow \infty} {J}\left(u_n\right)=\inf
  _{\mathcal{S}(c) \cap E} J$. From Ekeland's variational
 principle, we can assume that
{$(u_n)_n$} is a Palais-Smale sequence for
$\left.J\right|_{\mathcal{S}(c) \cap E}$.
{Hence,  arguing as above, we} obtain a solution
$(\bar{u}, \bar{\lambda}) \in \mathbb{R} \times(\mathcal{S}(c) \cap E)$ of~\eqref{Equation} such that $J(\bar{u})=\min _{\mathcal{S}(c) \cap E} J$. The fact that $\min _{\mathcal{S}(c) \cap W_{\mathcal{O}(N)}} J=\min _{\mathcal{S}(c) \cap W} J$ follows from the properties of the Schwartz rearrangement.
\end{proof}

	\section{Appendix}
{In this section we show that the energy functional $J$ is not well defined in $X$.}

\begin{remark}\label{Re-2N/N+2}
{\rm	Since we consider the case where
	\begin{equation}
		2 < \bar{q} = \left(1 + \frac{2}{N}\right) \min \{2, q\},
	\end{equation}
	we have either $\frac{2N}{N+2} < q < 2, \quad N \geq 2$ or $2 < q < N, \quad N \geq 3$.}
\end{remark}

	\begin{Lemma}\label{lemmanotwelldefined}
		For $ \frac{2 N}{N+2}<q<2$, $N \geq 2 $ or $2<q<N$, $N \geq 3$,  there exists $u \in X$ such that {$\int_{\mathbb{R}^N} u^2 \log u^2 d x=-\infty$}.
\end{Lemma}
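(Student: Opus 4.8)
The plan is to produce a single explicit radial function sitting on the borderline: it decays at infinity just fast enough to belong to $X$, yet just slowly enough that the (negative, unbounded) weight $\log u^2$ is not integrated away. Concretely, I would fix $R_0\ge 3$ and take a radial $u\in C^\infty(\mathbb{R}^N)$, positive and bounded, with
\[
u(x)=\frac{1}{|x|^{N/2}\,\log|x|}\qquad\text{for }|x|\ge R_0,
\]
and an arbitrary smooth positive bounded radial extension on $\{|x|<R_0\}$. Because $u$ is smooth and bounded and $\{|x|<R_0\}$ has finite measure (and $t\log t\to 0$ as $t\to 0^+$), this inner region contributes only finite amounts to every integral below, so all estimates reduce to the tail $\{|x|\ge R_0\}$, on which moreover $|\nabla u(x)|\le C|x|^{-N/2-1}(\log|x|)^{-1}$.

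First I would verify $u\in X$ by passing to polar coordinates. For the $H^1$ part, $\|u\|_2^2\le C\int_{R_0}^\infty r^{-1}(\log r)^{-2}\,dr<\infty$ and $\|\nabla u\|_2^2\le C\int_{R_0}^\infty r^{-3}(\log r)^{-2}\,dr<\infty$. For the $D^{1,q}$ part, $\|\nabla u\|_q^q\le C\int_{R_0}^\infty r^{\,N-1-q(N/2+1)}(\log r)^{-q}\,dr$, which is finite precisely because the exponent satisfies $N-1-q(N/2+1)<-1$, that is, $q>\tfrac{2N}{N+2}$ (and this holds automatically when $2<q<N$); likewise, with $q^{\ast}:=\tfrac{qN}{N-q}$ (here $q<N$ in both admissible regimes), $\|u\|_{q^{\ast}}^{q^{\ast}}\le C\int_{R_0}^\infty r^{\,N-1-q^{\ast}N/2}(\log r)^{-q^{\ast}}\,dr$, which is finite precisely because $q^{\ast}>2$, again equivalent to $q>\tfrac{2N}{N+2}$. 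Hence $u\in H^1(\mathbb{R}^N)\cap D^{1,q}(\mathbb{R}^N)=X$.

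Next I would compute the logarithmic integral on the tail. There $u^2=|x|^{-N}(\log|x|)^{-2}$ and $\log u^2=-N\log|x|-2\log\log|x|$, so
\[
u^2\log u^2=-\frac{N}{|x|^N\log|x|}-\frac{2\log\log|x|}{|x|^N(\log|x|)^2}.
\]
The second term is integrable over $\{|x|\ge R_0\}$, whereas
\[
\int_{\{|x|\ge R_0\}}\frac{dx}{|x|^N\log|x|}=\omega_{N-1}\int_{R_0}^\infty\frac{dr}{r\log r}=\omega_{N-1}\bigl[\log\log r\bigr]_{R_0}^\infty=+\infty .
\]
Since the contribution of $\{|x|<R_0\}$ is finite, we conclude $\int_{\mathbb{R}^N}u^2\log u^2\,dx=-\infty$, as claimed.

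The only genuinely delicate point — and exactly the source of the hypothesis $q>\tfrac{2N}{N+2}$ highlighted in Remark~\ref{Re-2N/N+2} — is the balancing of the exponents. The polynomial rate $|x|^{-N/2}$ is critical for membership in $L^2(\mathbb{R}^N)$, and it is precisely the rate for which $\int u^2|\log u^2|\,dx$ just fails to converge once the extra $(\log|x|)^{-1}$ factor is inserted; at that same rate, finiteness of both $\|\nabla u\|_q$ and $\|u\|_{q^{\ast}}$ amounts to $q\bigl(1+\tfrac{2}{N}\bigr)>2$. If one wanted the construction to keep working up to the endpoint $q=\tfrac{2N}{N+2}$ one would have to replace $(\log|x|)^{-1}$ by a more carefully chosen slowly varying factor, and below that endpoint no such $u$ can lie in $D^{1,q}(\mathbb{R}^N)$.
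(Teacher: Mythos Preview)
Your proof is correct and uses essentially the same test function as the paper, $u(x)=\bigl(|x|^{N/2}\log|x|\bigr)^{-1}$ outside a ball, with the same radial-integral verification that $u\in X$ hinges exactly on $q>\tfrac{2N}{N+2}$. Your presentation is in fact tidier: you treat both ranges of $q$ by a single exponent count rather than splitting cases, and you explicitly compute $\int u^2\log u^2\,dx=-\infty$ rather than citing it, but the underlying idea is identical to the paper's.
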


\begin{proof}
{It} is enough to consider a smooth function that satisfies
	$$
	u(x)=\begin{cases}
		\left(|x|^{N / 2} \log (|x|)\right)^{-1}, &|x| \geq 3, \\
		0, &|x| \leq 2.
	\end{cases}
	$$
	We know that $u\in H^1(\mathbb{R}^N)$ and $\int_{\mathbb{R}^N} u^2 \log u^2 d x=-\infty$ for $N\geq 2$,
	so we only need to prove {that} $u \in D^{1, q}(\mathbb{R}^N)$, that is $\int_{\mathbb{R}^N}|\nabla u|^q d x<\infty$.
	
	If $2<q<N$, $N \geq 3$, choose $R>0$ large enough such
that $u(x)<1$ on $\mathbb{R}^N \backslash B_{R}(0)$.
{Hence,}
	$$
	\int_{\mathbb{R}^N \backslash B_{R}(0)}|\nabla u|^q d x<	
	\int_{\mathbb{R}^N \backslash B_{R}(0)}|\nabla u|^2 d x<\infty.
	$$
	Since ${u} \in C^{\infty}(\mathbb{R}^N)$, we deduce that
	$$
	\int_{B_{R}(0)}|\nabla u|^q d x<\infty,
	$$
	and so
	$$
	\int_{\mathbb{R}^N}|\nabla u|^q d x{=}	
	\int_{\mathbb{R}^N \backslash B_{R}(0)}|\nabla u|^q d x+\int_{B_{R}(0)}|\nabla u|^q d x<\infty.
	$$
	
	If $ \frac{2 N}{N+2}<q<2$, $N \geq 2 $, we compute the following integral directly
	$$
	\begin{aligned}
		\int_{B_{1}(0)}|\nabla u|^q d x & =\int_{B_{1}(0)}\left[\sum_{i=1}^N\left|\frac{-\frac{N}{2}|x|^{\frac{N}{2}-1} \log (|x|)+|x|^{\frac{N}{2}-1}}{|x|^N|\log (|x|)|^2} \cdot \frac{x_i}{|x|}\right|^2\right]^{\frac{q}{2}} d x \\
		& =\int_{B_{1}(0)}\left|\frac{|x|^{\frac{N}{2}-1}-\frac{N}{2}|x|^{\frac{N}{2}-1} \log (|x|)}{|x|^N|\log (|x|)|^2}\right|^q d x \\
		& \leq C_1 \left( \int_{B_{1}(0)} \frac{|x|^{-q-\frac{N}{2} q}}{|\log (|x|)|^{2 q}} d x+ \frac{|x|^{-q-\frac{N}{2} q}}{|\log (|x|)|^q} \right) d x \\
		& =C_2 \int_1^{\infty} \left(\frac{r^{-1+N-q-\frac{N}{2} q}}{|\log r|^{2 q}}  d r+ \frac{r^{-1+N-q-\frac{N}{2} q}}{|\log r|^q} \right)  d r<\infty,
	\end{aligned}
	$$
	
	Where $C_1$ and $C_2$ are positive constants.
	Since $\int_1^{\infty} \frac{r^\alpha }{|{\ln r}|^{\beta}} d r<\infty$ for all $\alpha<-1$, $\beta>0$ and since $-1+N-q-\frac{N}{2} q<-1$, we have
	
	$$
	\int_1^{\infty} \left(\frac{r^{-1+N-q-\frac{N}{2} q}}{|\log r|^{2 q}} \right) d r<\infty ,\qquad
 \int_1^{\infty} \left(\frac{r^{-1+N-q-\frac{N}{2} q}}{|\log r|^{ q}} \right) d r<\infty.
	$$
{Hence,}
	$\int_{\mathbb{R}^N}|\nabla u|^q d x<\infty$ holds.
\end{proof}

\subsection*{Conflict of interest}

The authors declare no conflict of interest.

\subsection*{Ethics approval}
 Not applicable.

\subsection*{Data Availability Statements}
Data sharing not applicable to this article as no datasets were generated or analysed during the current study.

\subsection*{Acknowledgements}
C. Ji was partially supported by National Natural Science Foundation of China (No. 12171152). P. Pucci is a member of the {\em Gruppo Nazionale per
l'Analisi Ma\-te\-ma\-ti\-ca, la Probabilit\`a e le loro Applicazioni} (GNAMPA) of the {\em Instituto Nazionale di Alta Matematica} (INdAM)
and this paper was written under the auspices of GNAMPA--INdAM.


\begin{thebibliography}{99}
		\bibitem{agueH3008sharp}
	M. Agueh,
	\textit{Sharp Gagliardo--Nirenberg inequalities via $p$-Laplacian type equations},
	NoDEA Nonlinear Differential Equations Appl. \textbf{15} (2008), 457-472.

	\bibitem{Adams2003Sobolev}	
	A. Adams, J.F. Fournier,
	\textit{Sobolev Spaces}, 2nd ed., Academic Press, (2003).

   \bibitem{AlvesChaobefore} C.O. Alves, C. Ji,
    \textit{Existence and concentration of positive solutions for a logarithmic Schr\"{o}dinger equation via penalization method}, Calc. Var. Partial Differential Equations  \textbf{59} (2020), 21.

    \bibitem{AJ0}
    C.O. Alves, C. Ji,  \textit{Multi-bump positive solutions for a logarithmic Schr\"{o}dinger equation
with deepening potential well}, Sci. China Math. \textbf{65} (2022), 1577-1598.

     \bibitem{AJ1} C.O. Alves, C. Ji,
     \textit{Multi-peak positive solutions for a logarithmic Schr\"{o}dinger equation via variational methods},
     Israel J. Math. \textbf{259} (2024), 835-885.

   \bibitem{AJ2} C.O. Alves, C. Ji,
  \textit{ Multiple Normalized Solutions to a logarithmic Schr\"{o}dinger equation via Lusternik-Schnirelmann category},
    J. Geom. Anal. \textbf{34} (2024), 198.
	
		\bibitem{Baldelli20222qF}
	L. Baldelli, R. Filippucci,
	Existence of solutions for critical $ (p, q) $-Laplacian equations in $\mathbb{R}^N$,
	Commun. Contemp. Math. \textbf{25} (2022), 2150109.
	
	
	\bibitem{Baldelli2023Born-Infeld}
	L. Baldelli, J. Mederski and A. Pomponio,
	\textit{Normalized solutions to Born-Infeld and quasilinear problems}, arXiv:2312.15025, (2023).

	\bibitem{Baldelli20222q} L. Baldelli, T. Yang,
	\textit{Normalized solutions to a class of $(2, q)$-Laplacian equations}, arXiv:2212.14873, (2022).
	
	\bibitem{Boccardo1992}
	L. Boccardo, F. Murat,
	\textit{Almost Everywhere convergence of the gradients of solutions to elliptic and parabolic equations},
	Nonlinear Analysis \textbf{19} (1992), 581-597.
	
	\bibitem{Berestycki1983infinite}
	H. Berestycki, P.L. Lions,
	\textit{Nonlinear scalar field equations II Existence of infinitely many solutions},
	 Arch. Ration. Mech. Anal.
	 \textbf{82} (1983), 347-375.

	\bibitem{Brezis1983BL}	H. Br{\'e}zis,  E. Lieb,
	\textit{A relation between pointwise convergence of functions and convergence of functionals},
	Proc. Amer. Math. Soc. \textbf{88} (1983), 486-960.
		
	\bibitem{Lcai2024Norm} L. Cai, V.D. R{\u{a}}dulescu,
	\textit{Normalized solutions for $(p, q)$-Laplacian equations with mass supercritical growth}, J. Differential Equations \textbf{394} (2024), 57-104.
	
	\bibitem{Cao1992TM} D.M. Cao,
	\textit{Nontrivial solution of semilinear elliptic equations with critical exponent in  $\mathbb{R}^2$}, Comm. Partial Differential Equations \textbf{17} (1992), 407-435.

		\bibitem{Cherfils}
	L. Cherfils, V. ll'yasov,
	\textit{On the stationary solutions of generalized reaction diffusion equations with p\textup{\&}q-Laplacian},
	Commun. Pure Appl. Anal. \textbf{4} (2005), 9-22.
	
	\bibitem{Clement2000mpt}
	Ph. Cl{\'e}ment, M. Garcia-Huidobro and R. Man{\'a}sevich, K. Schmitt,
	\textit{Mountain pass type solutions for quasilinear elliptic
	equations}, Calc. Var. Partial Differential Equations  \textbf{11} (2000), 33-62.
	
	\bibitem{Frantzeskakis2010BEC}
	D.J. Frantzeskakis,
	\textit{Dark solitons in atomic Bose-Einstein condensates: from theory to experiments}, J. Phys. A, Math. Theor. \textbf{43} (2010), 213001.
	
	\bibitem{Fukagai2006NormOrlicz}
	N. Fukagai, M. Ito, K. Narukawa, \textit{Positive solutions of quasilinear elliptic equations with critical Orlicz-Sobolev nonlinearity on $\mathbb{R}^N$}, Funkcial. Ekvac. \textbf{49} (2006), 235-267.
	
	
	\bibitem{Jeanjean2019Nonradial}L. Jeanjean, S.-S. Lu,
	\textit{Nonradial normalized solutions for nonlinear scalar field equations}, Nonlinearity \textbf{32} (2019), 4942-4966.
	
	\bibitem{Jeanjean2022Onglobal}L. Jeanjean, S.-S. Lu,
	\textit{On global minimizers for a mass constrained problem}, Calc. Var. Partial Differential Equations  \textbf{61} (2022), 214.

	\bibitem{cs} C. Ji, A. Szulkin,
	\textit{A logarithmic Schr\"odinger equation with asymptotic conditions on the potential},
	J. Math. Anal. Appl. \textbf{437} (2016), 241-254.

\bibitem{LP}	
S. Leonardi,  N.S. Papageorgiou,
\textit{Positive solutions for a class of singular $(p, q)$-equations}, Adv. Nonlinear Anal. \textbf{12} (2023), 20220300.

\bibitem{Li1994obstacle} G.B. Li and O. Martio,
	\textit{Stability in obstacle problems}, Math. Scand. \textbf{75} (1994), 87-100.

	
	\bibitem{Lions1982sym}
	P.L. Lions,
	\textit{Sym{\'e}trie et compacit{\'e} dans les espaces de Sobolev},
	J. Funct. Anal. \textbf{49} (1982), 315-334.
	
	\bibitem{Malomed2008BEC}
	B. Malomed, \textit{Multi-component Bose-Einstein condensates: theory. In Emergent Nonlinear Phenomena in Bose-Einstein Condensates: Theory and Experiment}, Springer-Verlag, Berlin, (2008), 287-305.

	\bibitem{Mederski2020Nonlinearity}
	J. Mederski, \textit{Nonradial solutions of nonlinear scalar field equations}, Nonlinearity \textbf{33} (2020), 6349.
	
	
	\bibitem{Mederski2021optimal}
	J. Mederski,
	\textit{General class of optimal Sobolev inequalities and nonlinear scalar field equations}, J. Differential Equations \textbf{281} (2021), 411-441.
	
	\bibitem{Mederski2023sublinear}
	J. Mederski, J. Schino, \textit{Normalized solutions to Schr\"odinger equations in the strongly sublinear regime},
	  Calc. Var. Partial Differential Equations \textbf{63} (2024), 137.
	
\bibitem{MV}
  A. Mohammed, A. Vitolo,
  \textit{Remarks on comparison principles for $p$-Laplacian with extension to $(p, q)$-Laplacian},  Bull. Math. Sci.
  \textbf{14} (2024), 2450011.

	\bibitem{sz}M. Squassina, A. Szulkin,
	\textit{Multiple solution to logarithmic Schr\"odinger equations with periodic potential},  Calc. Var. Partial Differential Equations \textbf{54} (2015), 585-597.

	\bibitem{sz2} M. Squassina, A. Szulkin, \textit{Erratum to: Multiple solutions to logarithmic  Schr\"odinger equations with periodic potential},  Calc. Var. Partial Differential Equations \textbf{56} (2017), 1-7.
	
	\bibitem{Struwe2008Variational}
	M. Struwe, \textit{Variational Methods, fourthed., Results in Mathematics and Related Areas. third Series, A Series of Modern Surveys in Mathematics. 34}, Springer-Verlag, Berlin, (2008).
	
	\bibitem{Rao1985Orlicz}
	M.N. Rao, Z.D. Ren, \textit{Theory of Orlicz Spaces, Marcel Dekker}, New York, (1985).

	
	\bibitem{Shibata2013general}
	M. Shibata, \textit{Stable standing waves of nonlinear Schr\"odinger equations with a general nonlinear term}, Manuscr. Math. 143, (2014), 221-237.

	\bibitem{WZ}Z.-Q. Wang,  C.X. Zhang, \textit{Convergence from power-law to logarithmic-law in nonlinear scalar field equations}, Arch. Ration. Mech. Anal. \textbf{231} (2019), 45-61.
	
	\bibitem{Weinstein1983GN}M. Weinstein, \textit{Nonlinear Schrodinger equations and sharp interpolation estimates}, Commun. Math. Phys. \textbf{87} (1983), 567-576.
	
	\bibitem{Willem1996Minimax}M. Willem, \textit{Minimax theorems, Progress in Nonlinear Differential Equations and their Applications 24}, Birkh\"{a}user  Boston, Boston, (1996).

	\bibitem{Zlo}
	K.G. Zloshchastiev, \textit{ Logarithmic nonlinearity in the theories of quantum gravity: origin
of time and observational consequences}, Grav. Cosmol. \textbf{16} (2010), 288-297.	


\end{thebibliography}
\end{document}